\title[A Dolbeault-Grothendieck lemma on complex spaces]{A Dolbeault-Grothendieck lemma on 
complex spaces via Koppelman formulas}
\author{Mats Andersson \& H{\aa}kan Samuelsson}
\thanks{The first author was partially supported by a grant from the Swedish Research Council. The second author
wishes to thank the Department of Mathematics, University of Oslo, where parts of his work was done.}
\subjclass[2000]{32A26, 32A27, 32B15, 32C30}
\address{M. Andersson, Department of Mathematical Sciences, Division of Mathematics, University of Gothenburg and 
Chalmers University of Technology, SE-412 96 G\"{o}teborg, Sweden}
\email{matsa@chalmers.se}
\address{H. Samuelsson, Department of Mathematical Sciences, Division of Mathematics, University of Gothenburg and 
Chalmers University of Technology, SE-412 96 G\"{o}teborg, Sweden}
\email{haakan.samuelsson@gmail.com}
\newtheorem{proposition}{Proposition}[section]
\newtheorem{theorem}[proposition]{Theorem}
\newtheorem{lemma}[proposition]{Lemma}
\newtheorem{corollary}[proposition]{Corollary}
\theoremstyle{definition}
\newtheorem{definition}[proposition]{Definition}
\newtheorem{example}[proposition]{Example}
\newtheorem{remark}[proposition]{Remark}
\newcommand{\Image}{{\text{Im}\,}}
\newcommand{\Ker}{{\text{Ker}\,}}
\newcommand{\C}{\mathbb{C}}
\newcommand{\debar}{\bar{\partial}}
\newcommand{\dbar}{\bar{\partial}}
\newcommand{\A}{\mathscr{A}}
\newcommand{\R}{\mathbb{R}}
\newcommand{\J}{\mathcal{J}}
\newcommand{\E}{\mathscr{E}}
\newcommand{\Curr}{\mathcal{C}}
\newcommand{\W}{\mathcal{W}}
\newcommand{\PM}{\mathcal{PM}}
\newcommand{\hol}{\mathscr{O}}
\newcommand{\Ok}{\mathscr{O}}
\newcommand{\ordo}{\mathcal{O}}
\newcommand{\K}{\mathscr{K}}
\newcommand{\D}{\mathscr{D}}
\newcommand{\F}{\mathscr{F}}
\newcommand{\Proj}{\mathscr{P}}
\newcommand{\real}{\mathfrak{R}\mathfrak{e}}
\newcommand{\CH}{\mathcal{{ CH}}}
\newcommand{\B}{\mathbb{B}}
\newcommand{\pmm}{pseudomeromorphic }
\newcommand{\nbh}{neighborhood }
\newcommand{\1}{{\bf 1}}
\newcommand{\w}{{\wedge}}
\newcommand{\codim}{{\text{codim}\,}}
\newcommand{\Ba}{{\mathcal B}}
\newcommand{\Homs}{{\mathcal Hom}}
\newcommand{\Ho}{{\mathcal H}}
\newcommand{\Ext}{{\mathcal Ext}}
\newcommand{\Kers}{{\mathcal Ker\,}}
\newcommand{\hra}{\hookrightarrow}
\def\newop#1{\expandafter\def\csname #1\endcsname{\mathop{\rm #1}\nolimits}}
\numberwithin{equation}{section}
\begin{document}
\nocite{*}
\bibliographystyle{plain}

\begin{abstract}
Let $X$ be a complex space of pure dimension. We introduce fine sheaves $\A^X_q$ of $(0,q)$-currents,
which coincides with the sheaves of smooth forms on the regular part of $X$, so that the associated 
Dolbeault complex yields a resolution
of the structure sheaf $\hol^X$. Our construction is based on intrinsic and quite explicit 
semi-global Koppelman formulas.    
\end{abstract}

\maketitle
\thispagestyle{empty}

\section{Introduction}
The fundamental Dolbeault-Grothendieck lemma assures that a smooth $\debar$-closed $(0,q)$-form on a complex 
$n$-dimensional manifold
$X$ is locally $\debar$-exact if $q\geq 1$ and holomorphic if $q=0$. This means precisely that the sheaf complex 
\begin{equation}\label{glattupplosn}
0\to \hol^X \hookrightarrow \E^X_{0,0} \stackrel{\debar}{\longrightarrow}
\E^X_{0,1} \stackrel{\debar}{\longrightarrow} \cdots \stackrel{\debar}{\longrightarrow}
\E^X_{0,n} \to 0
\end{equation}
is exact, where $\E^X_{p,*}$ denote the fine sheaves of smooth $(p,*)$-forms on $X$; thus \eqref{glattupplosn}
-- the Dolbeault complex -- is a fine resolution of the structure sheaf $\hol^X$. That a sheaf is fine means 
that it is a 
module over $\E^X_{0,0}$ and thus partitions of unity are possible. If $E\to X$ is a holomorphic 
vector bundle and $\hol(E)$ is the associated locally free $\hol^X$-module we then get, by the abstract 
de~Rham theorem,
the generalized Dolbeault-Grothendieck isomorphism, i.e., 
the representation of the sheaf cohomology groups $H^q(X,\hol(E))$ as the obstruction
to global solvability of the $\debar$-equation.

The main result of this paper is a generalization of the Dolbeault-Grothendieck lemma to reduced complex spaces. 
To achieve this we construct intrinsic quite explicit semi-global weighted Koppelman formulas on 
subvarieties $X\subset\Omega$ of pseudoconvex domains $\Omega\subset \C^N$.

Abstractly, one can always find fine resolutions of $\hol^X$ as was realized by R.\ Godement and A.\ Grothendieck
in the '50s and '60s, but these resolutions are naturally very abstract.
More concrete Dolbeault resolutions of $\hol^X$ 
were constructed by Ancona-Gaveau for certain complex spaces in \cite{AG1}, \cite{AG2}, \cite{AG3};
in particular, their sheaves coincide with $\E_{0,*}^X$ on the regular part of $X$. 
Ancona and Gaveau assume that $X$ is normal, that $X_{sing}$ is a smooth manifold, and 
that there exists a desingularization of $X$ with certain hypotheses on the exceptional divisor. 
Their sheaves of differential forms are then the differential forms on this desingularization specified by jet
conditions along the exceptional divisor.

\begin{center}
---
\end{center}

\noindent Let $X$ be a complex space of pure dimension $n$ and let $\hol^X$ be the structure sheaf of 
(strongly) holomorphic functions. Locally, $X$ can be considered as a subvariety of a domain $\Omega\subset \C^N$,
$i\colon X\hookrightarrow \Omega$, and then $\hol^X=\hol^{\Omega}/\J_X$, where $\J_X$ is the sheaf in $\Omega$ 
of holomorphic functions vanishing on $X$. In the same way we say that $\phi$ is a smooth $(p,q)$-form on $X$,
$\phi\in \E_{p,q}(X)$, if given a local embedding, there is a smooth form $\Phi$ in a neighborhood in the ambient 
space such that $\phi=i^*\Phi$ on $X_{reg}$. It is well-known that this defines an intrinsic sheaf $\E^X_{p,q}$ on $X$.

It was proved by Malgrange, \cite{Mal}, that if a smooth function $\phi\in \E_{0,0}(X)$ is $\debar$-closed, i.e., 
holomorphic, on $X_{reg}$, then $\phi$ is indeed (strongly) holomorphic. If $\phi$ is a smooth $\debar$-closed
$(0,q)$-form, $q\geq 1$, on $X$ and $X$ is embedded as a reduced complete intersection in a pseudoconvex domain,
then Henkin-Polyakov, \cite{HePo}, proved that there is a solution $u$ to $\debar u=\phi$ on $X_{reg}$
(which is not Stein in general).
It was an open question for long whether this holds in more generality. In \cite{ASarXiv} we proved\footnote{This 
material is published in \cite{AS}.}  that if
$\phi$ is a smooth $\debar$-closed $(0,q)$-form on a Stein space then there is a solution to the 
$\debar u=\phi$ on $X_{reg}$. 
The proof is based on Koppelman formulas on $X$, and 
the smooth solution to $\dbar u=\phi$ in $X_{reg}$ is given (semiglobally) 
by an intrinsic integral formula $u(z)=\K\phi(z)$, cf., Theorem~\ref{koppelman} and  \eqref{KochP} below.
Similar, but non-intrinsic formulas were obtained by Henkin-Polyakov, \cite{HePo}, 
in the special
case of a reduced, embedded complete intersection. In \cite{AG4}, Koppelman formulas of 
Bochner-Martinelli type are used
but these do not produce solutions to the $\debar$-equation. In the recent paper \cite{HePo2}
the $\dbar$-equation is studied with integral formulas on a not necessarily reduced
projective variety that is a (locally) complete  intersection.

\begin{example}
Let $X$ be the germ of the curve at $0\in \C^2$ defined by $t\mapsto (t^3,t^7+t^8)$. 
If $\phi=\bar{w}d\bar{z}=3(\bar{t}^9+\bar{t}^{10})d\bar{t}$ than one can verify that there is no solution
$\psi=f(t^3,t^7+t^8)$, with $f$ smooth, to $\debar \psi = \phi$. See, e.g., \cite{Rupp} for other examples.
\end{example}

From the example it is clear that our solution $u=\K\phi$ cannot have a smooth extension to $X$.
However, it turns out that it has a current extension to all of $X$. 
For the definition of currents on $X$, see Section~\ref{PMsektion}. It turns out that the singularities
of  $u$ at $X_{sing}$ are quite ``mild''; in particular we can multiply it with any
smooth form and plug it into  the integral operator $\K$ again so that the Koppelman formula
still holds.  The resulting currents can again be multiplied by a smooth form and plugged into
the formulas, etc.  The currents obtained in this way in a finite number of steps 
form  sheaves $\A_q^X$; for the precise definition, see Section~\ref{asec}.
Here is our main result.

\begin{theorem}\label{main}
Let $X$ be a reduced complex space of pure dimension $n$. The sheaves 
$\A^X_{q}$ are fine sheaves of $(0,q)$-currents on $X$, they contain $\E^X_{0,q}$, and moreover
\begin{itemize}
\item[(i)] $\A^X:=\bigoplus_q \A^X_q$ is a module over $\E^X_{0,*}$,

\item[(ii)] $\A^X_q|_{X_{reg}}=\E^X_{0,q}|_{X_{reg}}$,

\item[(iii)] the complex 
\begin{equation*}
0\to \hol^X \hookrightarrow \A^X_{0} \stackrel{\debar}{\longrightarrow}
\A^X_{1} \stackrel{\debar}{\longrightarrow} \cdots \stackrel{\debar}{\longrightarrow}
\A^X_{n} \to 0
\end{equation*}
is exact.
\end{itemize}
\end{theorem}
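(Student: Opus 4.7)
The plan is to verify the three assertions in turn, with the Koppelman formulas of Theorem~\ref{koppelman} as the principal tool.

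I would first dispatch the formal properties. Fineness and the module structure (i) are built into the very construction of $\A^X_q$: by definition the class is closed under multiplication by sections of $\E^X_{0,*}$, so $\A^X=\bigoplus_q \A^X_q$ is tautologically an $\E^X_{0,*}$-module and in particular admits partitions of unity. The containment $\E^X_{0,q}\subset \A^X_q$ is immediate since the construction starts from smooth forms. For (ii), the Koppelman kernel is smooth off the diagonal on $X_{reg}$, so $\K$ sends smooth forms on $X_{reg}$ to smooth forms there; by induction on the number of operations $\K$ and smooth multiplications used to produce a section of $\A^X_q$, every such section is smooth on $X_{reg}$. The reverse inclusion is by definition.

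The real work is in (iii), and my strategy is to promote the Koppelman identity
\begin{equation*}
\phi \;=\; \dbar\K\phi + \K\dbar\phi + \Proj\phi
\end{equation*}
from smooth forms to the whole of $\A^X_q$, where $\Proj$ is the projection appearing in Theorem~\ref{koppelman} (holomorphic for $q=0$, zero for $q\geq 1$). Granting this, exactness is almost immediate: for $q\geq 1$ and $\dbar\phi=0$ we get $\phi=\dbar\K\phi$ with $\K\phi\in\A^X_{q-1}$ (closure under $\K$ being part of the definition of $\A^X$); for $q=0$ a $\dbar$-closed section of $\A^X_0$ reduces to $\Proj\phi$, hence is holomorphic (this is the analogue of Malgrange's theorem in the present framework); and $\dbar^2=0$ holds on currents.

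To extend the Koppelman identity, I would argue as follows. Every $\phi\in\A^X_q$ is, by construction, obtained from smooth forms by finitely many steps of multiplication by a smooth $(0,*)$-form and application of $\K$. The pseudomeromorphic calculus developed in Section~\ref{PMsektion} ensures that all the currents appearing in both sides of the formula make sense on $X$ and have controlled singularities along $X_{sing}$. On $X_{reg}$ the identity is the classical smooth Koppelman formula. The difference of the two sides is therefore a current supported in $X_{sing}$, and by the standard Federer-type support/dimension argument within the pseudomeromorphic class such a current must vanish.

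The main obstacle I anticipate is precisely this last extension: one must verify that $\dbar$ commutes with $\K$ without producing spurious boundary contributions from $X_{sing}$, and that the iterated compositions of smooth multiplication and $\K$ that define $\A^X_q$ genuinely preserve the pseudomeromorphic class in a way compatible with the Koppelman identity. This is a regularity and support question at the singular locus; it is precisely the reason the sheaves $\A^X_q$ had to be defined by iteration rather than in one shot, and it is where the intrinsic character of the semi-global Koppelman formulas of Theorem~\ref{koppelman} is essential.
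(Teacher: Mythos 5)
Your overall architecture matches the paper's: (i) and the fineness are indeed tautological from the definition, (ii) follows from the smoothing properties of $\K$ off the diagonal on $X_{reg}$, and exactness in (iii) is obtained by promoting the Koppelman identity to $\A^X_q$. But the proposal stops exactly where the proof begins. You correctly flag as ``the main obstacle'' the absence of spurious boundary contributions from $X_{sing}$, but you do not supply the argument, and this is the technical heart of the whole paper. Concretely, one must show that every iterated current $\xi_\nu\wedge\K_\nu(\cdots\xi_1\wedge\K_1(\xi_0)\cdots)$ lies in $\mbox{Dom}\,\debar_X$, i.e.\ satisfies the boundary condition $\debar\chi_\delta\wedge\mu\wedge\omega\to 0$ of Lemma~\ref{urdjur} for every structure form $\omega$. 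This is the content of the Main Lemma (Lemma~\ref{mainlemma}): an induction over the number of kernels in which one shows that the limit current is supported on $(\Delta^X)_{sing}$ inside the multi-diagonal, and then kills it component by component ($\omega_0,\omega_1,\dots$) using the dimension principle together with the codimension estimates $\codim X^k\ge k+1$ for the filtration of the singular locus. Without this, Proposition~\ref{reglosning} cannot be applied to sections of $\A^X$, and your ``promotion'' of the Koppelman identity has no starting point.

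Two further points would fail as written. First, your claim that the difference of the two sides of the Koppelman identity is supported on $X_{sing}$ and hence vanishes ``by a standard Federer-type support/dimension argument'' is not correct in general: $X_{sing}$ may have codimension $1$ in $X$, and a pseudomeromorphic $(0,q)$-current with $q\ge 1$ supported there need not vanish by the dimension principle. The paper instead shows that \emph{both sides have the SEP} — which requires knowing $\debar\K\phi\in\W^X$, hence again $\K\phi\in\mbox{Dom}\,\debar_X$ — and then two SEP-currents agreeing on $X_{reg}$ agree everywhere. Second, you never verify that $\debar$ maps $\A^X_q$ into $\A^X_{q+1}$, without which (iii) is not even a complex of the stated sheaves; in the paper this is a separate induction over the number $\nu$ of iterations, solving $\debar\K_\ell\phi_{\ell-1}=\phi_{\ell-1}-\K_\ell(\debar\phi_{\ell-1})-\Proj_\ell\phi_{\ell-1}$ backwards from the Koppelman formula, with the right-hand side in $\A^X$ by the inductive hypothesis. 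These are not routine verifications; they are precisely the reason the sheaves are defined by iteration, as you yourself observe.
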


By the abstract theorem of de~Rham we immediately get the following 
\begin{corollary}\label{kohomrepr}
Let $X$ be a reduced pure-dimensional complex space, let $E\to X$ be a holomorphic vector bundle, and let 
$\hol(E)$ be the associated locally free $\hol^X$-module. Then we have the representation of sheaf cohomology
\begin{equation*}
H^q(X,\hol(E)) \simeq \frac{\{\varphi\in \A_{q}(X,E);\, \, \debar \varphi=0\}}{\{\debar u;\,\, u\in \A_{q-1}(X,E)\}},
\quad q\geq 1.
\end{equation*} 
\end{corollary}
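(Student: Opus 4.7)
The plan is to deduce the corollary directly from Theorem~\ref{main} by twisting the resolution with $\hol(E)$ and then invoking the abstract de~Rham theorem. First, I would define the twisted sheaves $\A^X_q(E) := \A^X_q \otimes_{\hol^X} \hol(E)$. On any open set $U\subset X$ where $E$ admits a holomorphic frame $e_1,\ldots,e_r$, we get a trivialization $\A^X_q(E)|_U \cong (\A^X_q|_U)^{\oplus r}$, and one defines $\debar_E$ componentwise by $\debar_E(\sum_j \alpha_j \otimes e_j) := \sum_j \debar\alpha_j \otimes e_j$. Since transition functions between holomorphic frames are holomorphic, $\debar_E$ commutes with changes of frame and is therefore a globally well-defined operator, and squares to zero because $\debar$ does on $\A^X_\bullet$.

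Next, because $\hol(E)$ is a locally free $\hol^X$-module of finite rank, tensoring the exact complex from Theorem~\ref{main}(iii) with $\hol(E)$ preserves exactness chart by chart, yielding the sheaf complex
\begin{equation*}
0 \to \hol(E) \hookrightarrow \A^X_0(E) \stackrel{\debar_E}{\longrightarrow} \A^X_1(E) \stackrel{\debar_E}{\longrightarrow} \cdots \stackrel{\debar_E}{\longrightarrow} \A^X_n(E) \to 0,
\end{equation*}
which is again exact. Moreover each $\A^X_q(E)$ is fine: by Theorem~\ref{main}(i) the sheaf $\A^X_q$ is a module over $\E^X_{0,0}$, and this module structure passes to the local trivializations and glues, so $\A^X_q(E)$ is a module over $\E^X_{0,0}$. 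Since complex spaces are paracompact, it follows that $\A^X_\bullet(E)$ is a fine, hence acyclic, resolution of $\hol(E)$.

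Finally, the abstract de~Rham theorem states that for any resolution of a sheaf by acyclic sheaves, the cohomology of the complex of global sections computes the sheaf cohomology. Applied here this gives
\begin{equation*}
H^q(X, \hol(E)) \cong \frac{\Ker\bigl(\debar_E\colon \A_q(X,E)\to \A_{q+1}(X,E)\bigr)}{\Image\bigl(\debar_E\colon \A_{q-1}(X,E)\to \A_q(X,E)\bigr)}, \quad q\geq 1,
\end{equation*}
which is exactly the asserted identification. No genuine obstacle appears at this stage; all substantive work is packaged into Theorem~\ref{main}. The only items requiring verification are the routine compatibilities of the twist with fineness and with exactness, both of which reduce immediately to the local triviality of $\hol(E)$ and the holomorphy of transition cocycles.
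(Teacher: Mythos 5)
Your proposal is correct and follows essentially the same route as the paper, which simply invokes the abstract de~Rham theorem applied to the fine resolution of Theorem~\ref{main} twisted by the locally free sheaf $\hol(E)$. The verifications you spell out (that $\debar_E$ is well defined because transition functions are holomorphic and $\debar$ commutes with multiplication by holomorphic functions on currents, and that local freeness preserves both exactness and fineness) are exactly the routine compatibilities the paper leaves implicit.
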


%%By definition, $\A_k$ are the smallest sheaves that are closed under multiplication by smooth forms
%%and the operators  $\K$. 
%%If $\mbox{dim}\, X=1$ and \eqref{glattupplosn} is exact,
%%then $\A^X=\E^X_{0,*}$; see Section~\ref{kurvsektion}. 

The semiglobal Koppelman formulas from \cite{ASarXiv} hold for currents
in  $\A^X$.

\begin{theorem}\label{koppelman}
Let $X$ be an analytic subvariety of pure dimension of a pseudoconvex domain $\Omega \subset \C^N$; take any 
relatively compact subdomain $\Omega'\Subset \Omega$ and put $X'=X\cap \Omega'$. There are integral operators
$\K\colon \A_{q+1}(X)\to \A_q(X')$ and $\Proj\colon \A_0(X) \to \hol(\Omega')$ such that
\begin{equation}\label{Koppel1}
\phi(z)=\debar \K \phi\, (z) + \K(\debar\phi) (z), \quad z\in X', \,\,\phi\in \A_q(X), \,\,q\geq 1,
\end{equation}
\begin{equation}\label{Koppel2}
\phi(z) = \K(\debar \phi) (z) + \Proj\phi\, (z), \quad z\in X', \,\, \phi\in \A_0(X).
\end{equation}
\end{theorem}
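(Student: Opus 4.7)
The plan is to exploit the inductive construction of $\A^X$ and reduce everything to the Koppelman formulas for smooth forms, already proved in \cite{ASarXiv}. By the definition given in Section~\ref{asec}, $\A^X_q$ consists of currents obtained from smooth $(0,*)$-forms by finitely many iterations of two operations: multiplication by a smooth form in $\E^X_{0,*}$, and application of the integral operator $\K$. Thus the class is designed precisely so that $\K\colon \A_{q+1}(X) \to \A_q(X')$ is automatically well-defined; the holomorphy of $\Proj\phi$ for $\phi \in \A_0(X)$ will follow from the explicit form of the projection kernel, which depends only holomorphically on $z$.

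First I would recall from \cite{ASarXiv} that $\K$ and $\Proj$ arise from intrinsic integral kernels $k(\zeta,z)$ and $p(\zeta,z)$ on a neighborhood of the diagonal in $X \times X'$, obtained from weighted Cauchy--Fantappiè--Leray data in the ambient space $\Omega \subset \C^N$. These kernels are pseudomeromorphic in the sense of Section~\ref{PMsektion}, with controlled singularities along the diagonal and along $X_{sing}$. For $\phi$ smooth on $X$, the identities \eqref{Koppel1}--\eqref{Koppel2} are the main results of \cite{ASarXiv}, obtained by Stokes' theorem applied to a suitable homotopy of integration cycles.

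The core of the argument is an induction on the depth of $\phi$ in the iterative construction of $\A^X_q$. The base case $\phi \in \E^X_{0,q}$ is precisely the smooth result. For the inductive step, a typical element of $\A^X$ has the form $\xi \w \K\psi$ with $\xi \in \E^X_{0,*}$ and $\psi$ of shallower depth. Two things then have to be checked: that $k(\zeta,z) \w \phi(\zeta)$ and $p(\zeta,z) \w \phi(\zeta)$ define pseudomeromorphic currents on $X \times X'$, so that the pushforwards $\K\phi$ and $\Proj\phi$ make sense by the general pushforward properties of pseudomeromorphic currents collected in Section~\ref{PMsektion}; and that $\debar$ may be commuted through the integral operator in the sense of currents, without introducing residual contributions on $X_{sing}$. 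Granting these, the Stokes-type computation from \cite{ASarXiv} carries over verbatim and \eqref{Koppel1}--\eqref{Koppel2} propagate to $\A^X$.

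The main obstacle is this last verification, in particular the absence of residue contributions concentrated on $X_{sing}$. It rests on the dimension principle recalled in Section~\ref{PMsektion}: a pseudomeromorphic $(0,q)$-current supported in a variety of codimension strictly greater than $q$ must vanish. Because the class $\A^X$ is built from operators whose kernels have the correct bidegree, each potentially problematic current appearing in the induction is of too low bidegree to be supported on $X_{sing}$, and therefore vanishes. Once this is in place, the induction closes and the smooth Koppelman formulas upgrade to \eqref{Koppel1}--\eqref{Koppel2} on all of $\A^X$.
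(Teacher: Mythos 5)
Your overall strategy -- induct on the depth of the iterated construction of $\A^X$, reduce to the smooth case, and kill the potential residue contributions at $X_{sing}$ by the dimension principle -- is the same skeleton as the paper's argument, which packages the induction into Lemma~\ref{mainlemma} and the membership $\A^X\subset\mbox{Dom}\,\debar_X$, and then invokes Proposition~\ref{reglosning}. But the step you yourself single out as the main obstacle is resolved with an argument that does not work as stated. A pseudomeromorphic $(\ast,q)$-current with $q\geq 1$ can perfectly well be supported on $X_{sing}$, which may have codimension $1$; ``too low bidegree to be supported on $X_{sing}$'' is therefore not a valid application of the dimension principle. What actually makes the argument close is a two-fold refinement. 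First, the limit current $T=\lim_\delta\debar\chi_\delta\w\omega(z^m)\w\bigwedge_j k_j(z^j,z^{j+1})$ must be localized not merely to $X_{sing}\times X\times\cdots\times X$ but to the singular part of the diagonal $\Delta^X$ in the $m$-fold product; this is achieved by the induction hypothesis applied \emph{off} each partial diagonal $\{z^j=z^{j+1}\}$, where $k_j$ is a smooth form times $\omega(z^j)$ and $T$ factors as a tensor product. Only then does the support have codimension $\geq mn+1$, to be compared with the antiholomorphic degree $m(n-1)+1$ of the component coming from $\omega_0$. Second, the structure form has higher components $\omega_r=\alpha^r\cdots\alpha^1\omega_0$ of antiholomorphic degree $r$ more, and for these one must gain matching codimension from the stratification $\codim X^r\geq r+1$, which rests on the purity of the ideal sheaf (Corollary 20.14 in \cite{Eis}). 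Neither the diagonal localization nor the stratification appears in your sketch, and without them the dimension count fails already for $m=1$.

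Two smaller points. The formulas \eqref{Koppel1} and \eqref{Koppel2} are first obtained only on $X'_{reg}$; to extend them across $X_{sing}$ one needs that every term lies in $\W^{X'}$, i.e., has the SEP, so that an identity on a dense Zariski-open set propagates -- your proposal does not distinguish $X'$ from $X'_{reg}$. Also, for $\K(\debar\phi)$ to be meaningful you must know that $\debar\phi$ again lies in $\A^X$ (or at least in $\W^X$); this is itself proved by bootstrapping the Koppelman formula at the previous depth of the induction (the step \eqref{sova} in the proof of Theorem~\ref{main}), and should be built explicitly into your inductive hypothesis.
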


The operators $\K$ and $\Proj$ are given as
\begin{equation}\label{KochP}
\K\phi\, (z) = \int_{\zeta} k(\zeta,z)\wedge \phi(\zeta),\quad
\Proj \phi \, (z) = \int_{\zeta} p(\zeta,z)\wedge \phi(\zeta),
\end{equation}
where $k$ and $p$ are integral kernels on $X\times X'$ and $X\times \Omega'$ respectively. These kernels are 
locally integrable with respect to $\zeta$ on $X_{reg}$ and the integrals are principal values at $X_{sing}$. If
$\phi$ vanishes in a neighborhood of an arbitrary point $x\in X$, then $\K\phi$ is smooth at $x$.

\begin{center}
---
\end{center}

\noindent 
Any meromorphic $(k,0)$-form $\phi$ on $X$, defines a principal value current on $X$ 
 and $\debar \phi=0$ means  that $\phi$ is in  $\Ba^X_k$, i.e.,  holomorphic in the 
sense of Barlet-Henkin-Passare, cf., Example~\ref{ba} below.
The sheaf $\Ba^X_k$ is in general larger than the sheaf of (strongly) holomorphic $(k,0)$-forms.
Let  $\W_{0,q}^X$ be the sheaf of pseudomeromorphic $(0,q)$-currents $\mu$  with the so-called SEP,
the {\it standard extension property}. The SEP means roughly speaking that $\mu$ is determined by its values
on any dense Zariski open subset of  $X$. For instance, 
$\W^X$ contains all semi-meromorphic forms as well as push-forwards of such forms under modifications and 
simple projections $X\times Y\to X$.

For currents in $\W^X_{0,*}$ we introduce 
a stronger notion for the $\debar$-operator, denoted $\debar_X$, such that the following holds:
If $\phi\in\W_{0,0}(X)$ and $\debar_X\phi=0$ then   $\phi\in \hol(X)$. 
If $\phi \in \W_{0,q}(X)$, $q\geq 1$, and $X$ is Stein 
then there is a solution to the $\debar$-equation on $X_{reg}$; 
cf.\ Proposition~\ref{Wprop} below and the succeeding remarks.
To explain the operator $\debar_X$, let us for simplicity here assume that $X$ is Cohen-Macaulay
and embedded in a pseudoconvex domain $\Omega \subset \C^N$. From a locally free 
resolution of $\hol^X=\hol^{\Omega}/\J_X$ we derive,
following the ideas in \cite{AW1}, a vector-valued meromorphic $(n,0)$-form $\omega$ on $X$ such that 
$\dbar\omega=0$, i.e, $\omega$ is in $\Ba^X_n$. 
%%
%%%%
We will show that the product $u\wedge \omega$ has a reasonable meaning
when $u$ is in $\W^X$. We say that a current $u$ in $\W^X_{0,*}$ is in $\mbox{Dom}\, \debar_X$ 
if there is  $\phi\in \W^X_{0,*+1}$ such that
$\debar (u\wedge \omega) = \phi\wedge \omega$; we write this equation as 
$\debar_X u = \phi$. 
It  turns out that then $\dbar_X u=\phi=\dbar u$, that  also $\phi$   is in $\mbox{Dom}\, \debar_X$ and 
that $\debar_X \phi=0$.
Since $\dbar\omega=0$ it follows directly that any smooth $(0,*)$-form $u$ is in $\mbox{Dom}\, \debar_X$
(and that $\dbar_X u=\dbar u$). 
In general,
the condition for $u$ being in $\mbox{Dom}\, \debar_X$ can be interpreted as a 
subtle boundary condition on $u$ at $X_{sing}$,
see Lemma \ref{urdjur}. In case $X$ is a reduced complete intersection, our definition
coincides with the definition of $\dbar$ in \cite{HePo2}. 

\begin{proposition}\label{Wprop} We use the notation from Theorem~\ref{koppelman}.
Let $\phi$ be in $\W_{0,q}(X)$. If $q\geq 1$, then $\K \phi$ is defined and is in $\W_{0,q-1}(X')$; 
if $q=0$, then $\Proj \phi$ is defined and it is in $\hol(\Omega')$. 
If $\phi$ is in $\mbox{Dom}\,\, \debar_X$,
then the Koppelman formulas \eqref{Koppel1} and \eqref{Koppel2} hold for $z\in X'_{reg}$. 
\end{proposition}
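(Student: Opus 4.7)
The plan is to address the three claims in sequence, using throughout that the sheaf $\W^X$ is closed under wedge products with almost semi-meromorphic forms, principal-value regularisations at $X_{sing}$, and proper push-forwards.

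For the first assertion, I would verify that the integrands $k(\zeta,z)\w \phi(\zeta)$ and $p(\zeta,z)\w \phi(\zeta)$ define pseudomeromorphic currents on $X\times X'$ and $X\times\Omega'$, respectively. Since $\phi\in\W_{0,q}(X)$ has the SEP and the kernels $k$ and $p$ are almost semi-meromorphic in $\zeta$ with only principal-value singularities at $X_{sing}$ and along the diagonal, the wedge products belong to $\W$ on the respective product spaces. Push-forward along $(\zeta,z)\mapsto z$, which is proper on the supports in question given $\Omega'\Subset\Omega$, preserves both pseudomeromorphicity and the SEP, so $\K\phi\in\W_{0,q-1}(X')$ and $\Proj\phi$ is a well-defined current on $\Omega'$.

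For the second assertion, note that $p(\zeta,z)$ depends holomorphically on $z\in\Omega'$ when $\zeta\in X_{reg}$. Hence, away from the exceptional locus of the principal-value regularisation, $\Proj\phi$ is given by an absolutely convergent integral whose output is holomorphic in $z$. The pseudomeromorphicity and SEP of $\phi$, combined with the fact that $\Omega'$ is smooth so that $\dbar$-closed currents in $\Omega'$ with a dense holomorphic representative are themselves holomorphic by elliptic regularity, force $\Proj\phi\in\hol(\Omega')$.

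For the third assertion, the key is to reduce to the already-known smooth case on $X'_{reg}$ via SEP. Both sides of \eqref{Koppel1} and \eqref{Koppel2} lie in $\W(X')$ and therefore are determined by their restrictions to $X'_{reg}$; on $X'_{reg}$ the formulas hold provided the $\dbar$ on the left-hand side can be moved inside the integral to produce $\K(\dbar\phi)$ without extra boundary contributions from $X_{sing}$. This is exactly what the hypothesis $\phi\in\mathrm{Dom}\,\dbar_X$ guarantees: multiplying the desired identity by the structure form $\omega$ converts it into a statement about the top-degree pseudomeromorphic form $\phi\w\omega$ on $\Omega'$, to which the ambient Koppelman formula applies directly, and one uses $\dbar(\phi\w\omega)=(\dbar_X\phi)\w\omega$ to recognise the resulting $\dbar$-term as $\K(\dbar_X\phi)\w\omega=\K(\dbar\phi)\w\omega$. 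Cancelling $\omega$ via the injectivity built into the definition of $\dbar_X$ yields the claim on $X'_{reg}$, and SEP propagates it to $X'$. The main obstacle I expect is precisely this last manoeuvre—commuting $\dbar$ with $\K$ in the $\W$-category without generating spurious residual currents at $X_{sing}$—and the $\mathrm{Dom}\,\dbar_X$ hypothesis is exactly the cancellation condition tailored to make it go through.
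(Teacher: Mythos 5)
Your treatment of the first two assertions is essentially the paper's: $k(\zeta,z)\wedge\phi(\zeta)$ and $p(\zeta,z)\wedge\phi(\zeta)$ are well defined in $\W(X\times X')$ by Proposition~\ref{multprop} because the kernels are almost semi-meromorphic, the push-forward under the (proper, thanks to the compactly supported weight) projection preserves $\W$, and $\Proj\phi$ is holomorphic because the weight $g$ is chosen holomorphic in $z$ so that $z\mapsto p(\zeta,z)$ is holomorphic; your appeal to elliptic regularity is an unnecessary detour but harmless.

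The third assertion is where your argument has a genuine gap. The phrase ``the ambient Koppelman formula applies directly'' to $\phi\wedge\omega$ hides exactly the difficulty to be overcome: applying the ambient formula to a \emph{current} supported on $X$, and recognising the result as the intrinsic operators $\K$ and $\Proj$, is not direct --- it is the whole content of Section~\ref{KoppelsektionI}, and the integration by parts it requires is precisely the step at issue. Moreover your $\omega$ enters in the $z$-variable (``multiplying the desired identity by $\omega$''), whereas the mechanism that actually works puts $\omega$ in the $\zeta$-variable: one first proves the formula for $\chi_\delta\phi$, which has compact support in $X_{reg}$, by duality from the kernel identity $\debar k=[\Delta^X]-p$ of Lemma~\ref{ko}, and then lets $\delta\to 0$. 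The only dangerous term in the limit is $\K(\debar\chi_\delta\wedge\phi)$, and the point is that for $z$ fixed in a compact subset of $X'_{reg}$ and $\zeta$ near $X_{sing}$ the kernel $k(\zeta,z)$ is a smooth form times $\omega(\zeta)$, cf.\ \eqref{kalt}, so this term is controlled by $\debar\chi_\delta\wedge\phi\wedge\omega\to 0$, which is exactly condition \eqref{urbota} from Lemma~\ref{urdjur}, i.e., the hypothesis $\phi\in\mbox{Dom}\,\debar_X$. You correctly identify that hypothesis as the relevant cancellation condition, but you do not supply the structural fact about $k(\zeta,z)$ that makes it bite. Finally, your closing claim that ``SEP propagates it to $X'$'' is unjustified: for general $\phi\in\W\cap\mbox{Dom}\,\debar_X$ it is not known that $\debar\K\phi$ is in $\W(X')$, which is precisely why Proposition~\ref{Wprop} asserts the formulas only on $X'_{reg}$; the extension to all of $X'$ is available only for the subsheaf $\A^X$.
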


If $\phi\in\W_{0,0}(X)$ and $\dbar_X\phi=0$,  thus $\phi\in\Ok(X)$. If 
$\phi\in\W_{0,q+1}(X)$,  and $\debar_X \phi=0$, then there is $u\in\W_{0,q}(X')$
such that $\dbar u=\phi$ in $X'_{reg}$.
%%%
However, we do not know whether $\K \phi$, or any other $u$ in $\W^X_{0,*}$ that solves $\debar u = \phi$ on $X_{reg}$,
is in $\mbox{Dom}\,\debar_X$ in general; if $\mbox{dim}\, X=1$ then $\K \phi$ is
indeed in $\mbox{Dom}\,\debar_X$, see Section~\ref{kurvsektion} below.
However, we can show that the difference of two such solutions is in fact (strongly) holomorphic if $q=1$
and $\debar$-exact on $X_{reg}$ if $q>1$. By an elaboration of this kind of arguments we can get
a global version of Proposition~\ref{Wprop}, see Theorem~1.8 in \cite{AS}.

%%After establishing Proposition \ref{Wprop}, the crucial step in the proof of 
%%Theorems~\ref{main} and \ref{koppelman} is to verify  that  
%%$\A^X$ is a subsheaf  of $\mbox{Dom}\, \debar_X$; this is the content of Lemma~\ref{mainlemma}
%%below.

\begin{center}
---
\end{center}

\noindent Let us mention a few further applications of the Koppelman formulas \eqref{Koppel1} and \eqref{Koppel2}.
%%
%%They  have recently been used in \cite{ASS} to give a semi-explicit analytic proof of Huneke's uniform
%%Brian\c{c}on-Skoda theorem, \cite{Hu}, in the case of ideals of the local ring $\hol^X_x$. 
Let $X$ and $X'$ be as in Theorem~\ref{koppelman} and let $\delta(z)$ be the distance to $X_{sing}$. 
There is an integer $K$ such that
if $\phi\in C^{k}_{0,q}(X)$, $k\geq K+1$, $q\geq 1$ and  $\debar \phi=0$ (on $X_{reg}$), then
$u=\K\phi$ has meaning,
is in $C^{k}_{0,q}(X'_{reg})$,  $\debar u=\phi$ on $X'_{reg}$ and $u(z)=\ordo(\delta(z)^{-M})$
for some integer $M$ not depending on $\phi$, see \cite{AS}.
Furthermore, 
if $\phi \in C^{K+1}_{0,0}(X)$ is $\debar$-closed on $X_{reg}$, then $\phi\in \hol(X)$; 
this result was proved in  \cite{Mal} and \cite{Spallek}, but our formula gives an explicit
holomorphic extension $\Proj\phi$ to ambient space $\Omega'$.
%%\begin{remark}
%%With a possibly larger choice for $K$ we actually have that our solution $u=\K\phi$ extends to a current $U$
%%such that $\debar (U\wedge \omega) = \phi\wedge \omega$ holds on $X'$, 
%%where the product $U\wedge \omega$ is defined in a natural way. 

One can incorporate extra weight factors into the Koppelman formulas. 
In \cite{AS} we use this techique  to obtain  results in the spirit of \cite{FOV}. For instance, 
%%letting $\delta$ be the distance to $X_{sing}$
we prove that given $M\geq 0$ there is an $M'\geq 0$ such that if $\phi$ is a $\debar$-closed $(0,q)$-form, $q\geq 1$, 
on $X_{reg}$ with $\delta^{-M'}\phi\in L^p(X_{reg})$, $1\leq p\leq \infty$, then there is a $(0,q-1)$-form $u$
on $X'_{reg}$ with $\debar u=\phi$ there and $\delta^{-M} u\in L^p(X_{reg})$.
In \cite{AS} we also use extra weight factors  to solve the 
$\debar$-equation with compact support. As a consequence we relate extension results to
the cohomological dimension $\nu$ of $X$. For instance, if $\nu\geq 2$, then the Hartog's extension phenomenon holds.
Moreover, if $\phi$ is a smooth $\debar$-closed $(0,q)$-form on $X_{reg}$ and $q<\nu-1-\mbox{dim}\, X_{sing}$, then
there is a smooth solution to the $\debar$-equation on $X_{reg}$; this was proved by Scheja by
purely cohomological methods in the early '60s, \cite{Scheja}.

\begin{center}
---
\end{center}

\noindent Let us also mention a few other works related to the $\debar$-equation on complex spaces. In particular, the 
$L^2$-$\debar$-cohomology of (the regular part of) complex spaces 
(usually with respect to the non-complete metric given by embeddings) has been investigated in, e.g., 
\cite{Ohsa}, \cite{Nag}, \cite{PaSt1},
\cite{PaSt2}, \cite{DFV}, \cite{OvVass}, \cite{Rupp2}. Also H\"{o}lder and $L^p$-estimates have been studied, 
first in \cite{FoGa} and later, using integral formulas, in, e.g., \cite{Rupp}, \cite{RZ}.

\smallskip 

We have organized the paper as follows. In Section~\ref{PMsektion} we recall the basic facts from \cite{AW2} about
{\em pseudomeromorphic} currents and introduce a slightly more general notion thereof. We also define
the sheaf $\W^X$. 
Following the ideas in \cite{AW1}, we introduce in Section~\ref{Rsektion}
a certain residue  current $R$ associated to a subvariety of $\C^N$ as well as the associated
intrinsic structure form $\omega$ on $X$ that is  fundamental for the definition
of the strong $\dbar$-operator,
$\dbar_X$,  in Section~\ref{starksektion} and for  the construction of the Koppelman formulas.
%%In particular, the fundamental Proposition~\ref{fundprop} is proved except for the asymptotic estimate, 
%%which is postponed until Section~\ref{uppsksektion}. 
In Section~\ref{KoppelsektionI} we construct
such formulas on a subvariety of a pseudoconvex domain $\Omega \subset \C^N$ 
by using a Koppelman formula in $\Omega$ and ``concentrating''
it to the subvariety by means of the current $R$. We use these results in 
Section~\ref{KoppelsektionII}  to obtain  
intrinsic semi-global Koppelman formulas on an arbitrary pure dimensional reduced complex space.
We also prove Proposition~\ref{Wprop}.  
In  Section~\ref{asec}  we define the sheaf $\A^X$ and prove our main theorem.
In Section~\ref{exsektion} we provide a few examples that illustrates our construction. As special cases we get back the 
representation formulas of Stout and Hatziafratis in \cite{Stout} and \cite{Hatzi}
for (strongly) holomorphic functions.
In the case when $X$ is a complex curve we strengthen our results in Section~\ref{kurvsektion}.

\smallskip
\noindent{\bf Acknowledgement:} We are grateful to the referee for careful reading and many important
comments and suggestions.

\section{The sheaves $\PM^X$ and  $\W^X$}\label{PMsektion}

As in the smooth case, the sheaf  $\Curr^X_{p,q}$ of currents
of bidegree $(p,q)$ on $X$ is by definition  the dual of $\D^X_{n-q,n-p}$. 
Given a local embedding $i\colon X\hra \Omega\subset\C^N$, thus, the currents $\mu\in \Curr^X_{p,q}$
precisely correspond, via $\mu \mapsto i_* \mu$, to the currents of bidegree $(N-n+p,N-n+q)$ in the ambient space
that vanish on all test forms $\xi$ such that $i^*\xi =0$ on $X_{reg}$. 
If $u$ and $\mu$ are currents on $X$ we say that
$\debar u=\mu$ if  $u.\,\debar \xi = \pm \mu. \xi$
for all test forms $\xi$ on $X$. This is equivalent to that $\debar (i_*u)=i_*\mu$ in the ambient space.

Recall that in one complex variable $x$ one can define  the principal value current $1/x^m$, $m\ge 1$, 
as the value at $\lambda=0$ of the analytic continuation of $|x|^{2\lambda}/x^m$. Then the residue current
$\dbar(1/x^m)$ is the value at $\lambda=0$ of $\dbar|x|^{2\lambda}/x^m$; clearly it has its support
at $x=0$. 
Assume now that $x_j$ are holomorphic coordinates on $\C^N$. 
Since we can take tensor products of one-variable currents, we can form
the current %%%  (cf., also Remark~\ref{chprod} below)
\begin{equation*}
\tau=\debar \frac{1}{x_1^{a_1}}\wedge \cdots \wedge \debar \frac{1}{x_r^{a_r}}\wedge 
\frac{\gamma(x)}{x_{r+1}^{a_{r+1}}\cdots x_N^{a_N}},
\end{equation*}
where $a_1,\ldots,a_r$ are positive integers, $a_{r+1},\ldots,a_N$ are nonnegative
integers, and  $\gamma$ is a smooth compactly supported form. Such a $\tau$
is called an {\em elementary} pseudomeromorphic current.  It is  commuting
in the principal value factors and anti-commuting in the residue factors.

\begin{definition}
We say that a current $\mu$ on $X$ is {\em pseudomeromorphic}, $\mu\in \PM(X)$, if it is 
locally a finite sum of pushforwards $\pi_*\tau=\pi_*^1 \cdots \pi_*^m \tau$ under maps
\begin{equation}\label{PMdef}
X^m \stackrel{\pi^m}{\longrightarrow} \cdots \stackrel{\pi^2}{\longrightarrow} X^1 
\stackrel{\pi^1}{\longrightarrow} X^0=X,
\end{equation}
where each $\pi^j\colon X^j \to X^{j-1}$ is either a modification,  a simple projection
$X^{j-1}\times Z \to X^{j-1}$, or an open inclusion (i.e., $X^{j}$ is an open subset
of $X^{j-1}$), and $\tau$ is elementary on $X^m$.
\end{definition}

\begin{remark}\label{pmvek}
The notion of pseudomeromorphic currents on a complex manifold was introduced in \cite{AW2} and
the same definition works on  our  singular space $X$. The definition in \cite{AW2}
is however somewhat more restrictive as it  does not allow  simple projections.
As we will see,
all basic properties are preserved also for our class. 
\end{remark}

It is clear that the \pmm currents on $X$ define a  subsheaf $\PM^X$ of $\Curr^X$.
It is 
closed under $\debar$ and under exterior products with smooth forms since 
this is true for (finite sums of) elementary currents. Moreover, if $\mu\in \PM(X)$ and $V\subset X$ 
is an analytic subvariety, there is a unique pseudomeromorphic current $\mathbf{1}_{X\setminus V}\mu$
on $X$, the so-called {\em standard extension} of the natural restriction of $\mu$ to the open 
set $X\setminus V$. If $h$ is a holomorphic tuple such that $V=\{h=0\}$, then 
$|h|^{2\lambda}\mu$ (a priori defined for $\real \,\lambda \gg 0$) has a current valued analytic continuation
to $\real \, \lambda >-\epsilon$. The value at $\lambda=0$ is precisely the current $\mathbf{1}_{X\setminus V}\mu$,
i.e., we have
\begin{equation}\label{restrikdef}
\mathbf{1}_{X\setminus V}\mu = |h|^{2\lambda} \mu |_{\lambda =0}.
\end{equation} 
We can also obtain $\mathbf{1}_{X\setminus V}\mu$ as a principal value: If $\chi$ is a smooth approximand
of the characteristic function of $[1,\infty)$ on $\R$, then 
\begin{equation}\label{restrikdef2}
\mathbf{1}_{X\setminus V}\mu = \lim_{\delta\to 0^+} \chi(|h|/\delta) \mu.
\end{equation}
It follows that the current 
\begin{equation*}
\mathbf{1}_V \mu :=\mu-\mathbf{1}_{X\setminus V}\mu
\end{equation*}
is in $\PM^X$ and has support on $V$, and in particular that %%Moreover, cf.,  \eqref{restrikdef},
$\1_V\mu=\mu$ if $\mu$ has support on $V$.
The existence of \eqref{restrikdef} and the independence of $h$ follow from the corresponding statements 
for elementary currents as in \cite{AW2}, noting that if $\mu =\pi_* \tau$, then 
\begin{equation}\label{boll}
|h|^{2\lambda} \mu =\pi_* (|\pi^* h|^{2\lambda}\tau)
\end{equation}
for $\real \, \lambda \gg 0$. In the same way one can reduce the verification of \eqref{restrikdef2} to the case
with elementary currents.

These operations  satisfy the computation rule
\begin{equation}\label{rakneregel}
\mathbf{1}_V \mathbf{1}_{V'} \mu = \mathbf{1}_{V\cap V'} \mu.
\end{equation} 
In fact, if $\mu =\pi_*\tau$, then, cf.,  \eqref{boll},
\begin{equation}\label{boll2}
\mathbf{1}_V \mu = \pi_*(\mathbf{1}_{\pi^{-1}(V)} \tau),
\end{equation}
and so \eqref{rakneregel} follows from the corresponding statement for the more restricted 
pseudomeromorphic currents in \cite{AW2}, cf., Remark~\ref{pmvek}. 

If $\mu\in \PM(X)$, then $\mu \otimes 1 \in \PM(X\times Y)$. To see this, 
assume that $\mu=\pi_* \tau$,
where $X^m \stackrel{\pi}{\longrightarrow} X$ is a composition of modifications,  simple projections,
and open inclusions, 
cf., \eqref{PMdef}, and $\tau$ is elementary on $X^m$. Let $Y' \stackrel{\pi'}{\longrightarrow} Y$ be a 
modification with $Y'$ smooth and let $\{\rho_j\}_j$ be a locally finite partition of unity on $Y$.
The map $\Pi\colon X^m\times Y' \to X\times Y$ defined by $\Pi(x,y)=(\pi(x),\pi'(y))$ now is a composition of 
modifications,  simple projections, and open inclusions,  $\tau\otimes \Pi^*\rho_j$ are elementary on $X^m\times Y'$, 
and $\mu\otimes 1 =\sum_j \Pi_* (\tau\otimes \Pi^*\rho_j)$.

%%The following properties of pseudomeromorphic currents are crucial and will be used frequently.

\begin{proposition}\label{grad/stod}
Assume that $\mu\in \PM(X)$ has support on the subvariety $V\subset X$.

\begin{itemize}  
\item[(i)] If $\xi\in\Ok(X)$ is vanishing on $V$, then
\begin{equation}\label{bolt}
\bar{\xi} \mu=0, \quad d\bar{\xi}\wedge \mu = 0.
\end{equation}

\item[(ii)] \rm{(Dimension\ principle).}\  
If $\mu$ has bidegree $(*,p)$ and $\mbox{codim}\, V > p$, then $\mu=0$.
\end{itemize}

\end{proposition}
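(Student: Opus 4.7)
The plan is to reduce both assertions to direct computations on elementary pseudomeromorphic currents via the pushforward identity \eqref{boll2}, exploiting that multiplication by smooth forms commutes with proper pushforward. This parallels the strategy of \cite{AW2}, but one must revisit the codimension--bidegree bookkeeping to accommodate the simple projections now allowed in the definition (cf.\ Remark~\ref{pmvek}).

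For (i), since $\mu=\1_V\mu$, I write $\mu=\sum_k\pi^k_*\tau^k$ locally with $\tau^k$ elementary and apply \eqref{boll2} to obtain
\[
\mu=\sum_k\pi^k_*\bigl(\1_{(\pi^k)^{-1}V}\tau^k\bigr).
\]
Since $\bar\xi$ and $d\bar\xi$ are smooth, the problem reduces to showing $\bar h\,\tau=0$ and $d\bar h\wedge\tau=0$ when $\tau$ is an elementary current on $\C^N$ with support $E=\{y_1=\cdots=y_r=0\}$ and $h$ is holomorphic with $h|_E=0$. By the Nullstellensatz, $h=\sum_{j=1}^{r}y_jh_j$ with $h_j$ holomorphic, so it suffices to check $\bar y_j\tau=0$ and $d\bar y_j\wedge\tau=0$ for $j\leq r$. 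A one-variable computation gives $\bar y_j\debar(1/y_j^{a_j})=0$ for $a_j\geq 1$, since $\debar(1/y_j^{a_j})$ is a finite combination of $\partial_{y_j}^\ell$-derivatives of $\debar(1/y_j)$ at $y_j=0$, each annihilated by $\bar y_j$. Likewise $d\bar y_j\wedge\debar(1/y_j^{a_j})=0$ because both factors are $(0,1)$-forms in the variable $y_j$ alone; expanding $\bar h$ and $d\bar h$ in terms of $\bar y_j$ and $d\bar y_j$ then concludes~(i).

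For (ii), the core lemma is the dimension principle for elementary currents: if $\tau$ on $Y$ is elementary with support $E$ of codimension equal to its antiholomorphic degree, and $W\subset Y$ is analytic with $E\not\subset W$, then $\1_W\tau=0$. To prove it, pick $h$ holomorphic vanishing on $W$ but not identically on the (irreducible) coordinate subspace $E$; then by \eqref{restrikdef2} and a dominated convergence argument on $E$, $\lim_{\delta\to 0^+}\chi(|h|/\delta)\tau=\tau$, so $\1_{Y\setminus W}\tau=\tau$ and hence $\1_W\tau=0$.

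With this lemma in hand, write again $\mu=\sum_k\pi^k_*(\1_{(\pi^k)^{-1}V}\tau^k)$. For each $k$, either $(\pi^k)^{-1}V$ fails to contain the support $E^k$ of $\tau^k$, in which case $\1_{(\pi^k)^{-1}V}\tau^k=0$ by the lemma; or $E^k\subset(\pi^k)^{-1}V$, in which case the support of the summand lies in $\pi^k(E^k)\subset V$. A codimension--bidegree count along the chain \eqref{PMdef}---residue factors contribute one unit of both antiholomorphic bidegree and codimension of $E^k$, modifications and open inclusions preserve both, and simple projections decrease each by the relevant fiber dimension---forces $\codim\pi^k(E^k)\leq p$ in the second case, contradicting $\codim V>p$. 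Hence every summand vanishes and $\mu=0$. The main technical obstacle, and the only ingredient not already in \cite{AW2}, is this bookkeeping under simple projections.
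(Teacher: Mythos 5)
Your treatment of (i) follows essentially the paper's route (localize to the top of the tower \eqref{PMdef} via \eqref{boll2}, then compute with elementary currents), but the reduction as stated has a hole: $\1_{(\pi^k)^{-1}V}\tau^k$ is not elementary, and $h=\pi^{k*}\xi$ vanishes only on $(\pi^k)^{-1}V$, not on the coordinate plane $E^k$ carrying $\tau^k$. You need the dichotomy you only deploy in (ii): either $E^k\not\subset(\pi^k)^{-1}V$, in which case $\1_{(\pi^k)^{-1}V}\tau^k=0$ by the SEP of elementary currents along $E^k$, or $E^k\subset(\pi^k)^{-1}V$, in which case $\1_{(\pi^k)^{-1}V}\tau^k=\tau^k$ and $h$ really does vanish on $E^k$, so your one-variable computation applies. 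With that insertion (i) is fine; the paper simply quotes \cite{AW2} for the corresponding statement about $\1_{V^m}\tau$.

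The serious gap is in (ii). Your bookkeeping claims that modifications ``preserve'' the codimension of the image of $E^k$ and hence that $\codim \pi^k(E^k)\le p$; this is false, because a modification can collapse $E^k$ onto a set of much higher codimension. Concretely, let $\pi\colon Y\to\C^2$ be the blow-up of the origin, $E$ the exceptional divisor with local equation $y_1$, and $\tau=\debar(1/y_1)\w\gamma$ with $\gamma$ a test function. Then $E\subset\pi^{-1}(\{0\})$ (your ``second case''), $\pi_*\tau$ has bidegree $(0,1)$, and $\codim\pi(E)=2>1=p$, so no contradiction arises from any correct inequality; the pushforward does vanish, but for an analytic reason (here $\pi^*(dz_1\w dz_2)$ lies in the ideal $(y_1)$ and $y_1\debar(1/y_1)=0$), i.e., by a cancellation in the pairing, which is exactly the content of the dimension principle and not a consequence of degree/codimension counting. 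In short, the natural inequalities run the wrong way (they bound $\dim\pi(E^k)$ from above, hence $\codim\pi(E^k)$ from below), so your argument never shows that the surviving summands vanish --- it implicitly assumes the conclusion. The missing ingredient is a genuine support theorem for currents: the paper pushes $\mu$ into the ambient space, uses part (i) to get $d\bar\eta\w i_*\mu=0$ for every holomorphic $\eta$ vanishing on $i(V)$, and then invokes Theorem~2.10 and Corollary~2.11 of Chapter~III in \cite{JP}, which force a current of bidegree $(*,N-n+p)$, supported on a variety of codimension $>N-n+p$ and annihilated by the antiholomorphic conormals, to vanish. Some input of this kind is unavoidable; the tower bookkeeping alone cannot yield (ii).
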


The dimension principle is crucial and will be used frequently.

\begin{proof}
%%We will reduce the proof to the corresponding one in \cite{AW2} for the more restrictive definition of $\PM$.
Assume that $\mu=\pi_* \tau$, where $X^m \stackrel{\pi}{\longrightarrow} X$ is a composed map as in
\eqref{PMdef} and let $V^m:=\pi^{-1}V$.
If $\mu$ has support on $V$, then,  cf., \eqref{boll2}, 
$\mu=\1_V\mu=\pi_*(\mathbf{1}_{V^m}\tau)$,  so $\mu$ is the pushforward of
$\tilde\mu:=\1 _{V^m}\tau$, which is \pmm  in the sense of \cite{AW2} and has support on
$V^m$. If $\xi\in\Ok(X)$ vanishes on $V$, then $\pi^* \xi$ vanishes on $V^m$ and from \cite{AW2} we then know
that $\pi^*(\bar{\xi}) \tilde{\mu}=0$ and  $d\pi^*(\bar{\xi})\wedge \tilde{\mu}=0$. Hence, \eqref{bolt}
holds and so (i) is proved. %%$\bar{\xi} \mu=0$ and $d\bar{\xi}\wedge \mu = 0$.

Now, choose a local embedding $i \colon X\hra \Omega\subset \C^N$. If the hypotheses
of (ii)  hold, then $i_* \mu$ is a current in $\Omega$ of bidegree $(N-n+*,N-n+p)$ that has support on 
the variety $i(V)$, which has codimension $>N-n+p$. Moreover, $d\bar{\eta}\wedge i_*\mu=0$
for every holomorphic function $\eta$ vanishing on $i(V)$ since
\begin{equation*}
d\bar{\eta}\wedge i_*\mu = i_*\big( d(i^*\bar{\eta})\wedge \mu \big) = 0
\end{equation*}  
in view of part (i). Arguing as in the proofs of Theorem~2.10 and Corollary~2.11 in Chapter~III of \cite{JP},
it now follows that $i_*\mu=0$, and hence $\mu=0$.
\end{proof}

We say that $\mu\in\PM(X)$ has the {\em standard extension property}, SEP, on $X$ if $\mathbf{1}_V \mu=0$ for
every subvariety $V\subset X$ of positive codimension.

\begin{definition}[The sheaf $\W^X$]
Let $X$ be a complex space. We define  $\W^X$ as the subsheaf of $\PM^X$ of currents with the SEP on $X$.
%%We denote by $\W^X_{0,q}$ the currents in $\W^X$ of bidegree $(0,q)$.
\end{definition}

It is easy to check that 
\begin{equation}\label{skata}
\1_V (\phi\w \mu)=\phi\w \1_V\mu
\end{equation}
if $\phi$ is a smooth form  and  $\mu$ 
is pseudomeromorphic.  It follows that 
the  sheaf $\W^X$ is closed under multiplication by smooth forms.
%%%
If $\pi\colon Y\to X$ is either a modification or a  simple projection, 
 then $\pi_*$ maps
$\W(Y)\to \W(X)$. This follows from \eqref{boll2} since $\pi^{-1}(V)$ has positive codimension
if $V$ has positive codimension. %%; notice that it is crucial that $\pi$ is surjective. 

\begin{center}
---
\end{center}

\noindent Let $f$ be holomorphic on $X$. Then $|f|^{2\lambda}/f$  has an analytic continuation 
to $\real\, \lambda>-\epsilon$ and the value at $\lambda=0$ is a \pmm current,
that we denote $1/f$, which  coincides
with the function $1/f$ where $f$ is nonvanishing, see, e.g., \cite{hasamArkiv}. 
We call it the {\it principal value current}  $1/f$ associated with the meromorphic 
function\footnote{Somewhat abusively we often identify the meromorphic function
and the associated current.} $1/f$.
It follows from the dimension principle that $1/f$ has the SEP on $X$ and hence
is in $\W^X$.
The product of such a current
and a smooth form is called a {\it semi-meromorphic current} (or {\it form})
in $X$. 
%%In view of
%%\eqref{skata},  each semi-meromorphic current is in $\W^X$.
It will be convenient to introduce a more general notion.

\begin{definition}\label{semidef}
We say that a current $\alpha$ on a complex space $X$ is {\em almost semi-meromorphic} if 
there is a smooth 
modification $p\colon \tilde{X}\to X$ and a semi-meromorphic 
current $\tilde{\alpha}$ on $\tilde{X}$ such that $p_* \tilde{\alpha}=\alpha$.
\end{definition}

Notice that almost semi-meromorphic currents (forms)  are in $\W^X$.
One can also verify that a current $\alpha$ on $X$ is almost semi-meromorphic if and only if 
there is an analytic set $V\subset X$ of positive codimension such that $\alpha$ is smooth on $X\setminus V$,
and a smooth modification $p\colon \tilde{X}\to X$, inducing a biholomorphism 
$\tilde{X}\setminus p^{-1}(V)\to X\setminus V$, such that $(p|_{X\setminus V})^* \alpha$ extends to a semi-meromorphic
current on $\tilde{X}$.

It follows that  if $\alpha$ is almost semi-meromorphic in  $X$ and $\pi\colon X'\to X$ is a smooth
modification, then the pullback $\pi^*\alpha$ is well-defined and  again almost
semi-meromorphic.  
%%One can also verify that finite sums of almost semi-meromorphic currents
%%are almost semi-meromorphic.

\begin{remark}
Let us say that  $\alpha$ is {\it locally} (almost) semi-meromorphic if for each point there is a \nbh where
$\alpha$ is (almost) semi-meromorphic.
For instance, if $f$ is holomorphic in $\Omega\subset\C^N$
and $\gamma$ is a test form with support in $\Omega$, then the principal value current
$\gamma/f$ is semi-meromorphic in $\Omega$  and locally semi-meromorphic in $\C^N$. 
  One can check that a current $\alpha$ on $X$ is locally 
almost semi-meromorphic if and only if $\alpha$ is 
pseudomeromorphic in the sense of \cite{AW2}, cf., Remark~\ref{pmvek},
and has the SEP. In  fact, fix a point $x$ and suppose that $\alpha$ is a finite
sum of currents $\pi_*\tau_\ell$ as in Remark~\ref{pmvek} in a \nbh of $x$.
Let $V$ be the union of the images $\pi(V_\ell)$, where 
for each $\ell$, $V_\ell$ is the affine subspace where the
product of the residue  factors in $\tau_\ell$ has its support. Then $V$
is a germ of  an analytic subset of $X$ at $x$ of positive codimension and hence
by the SEP, $\1_V\alpha=0$. Thus $\alpha$ is the push-forward of 
elementary currents with no residue factors, and hence it is
almost semi-meromorphic at $x$.
\hfill $\Box$
\end{remark}

%%The sheaf $\W^X$ is obviously closed under exterior products with smooth forms but we actually have much more
%%freedom. 

\begin{proposition}\label{multprop}
Let $\alpha$ be  almost semi-meromorphic on $X$. For each $\mu\in \W(X)$, the current $\alpha\wedge \mu$,
a priori defined where $\alpha$ is smooth, has a unique extension to a current in $\W(X)$. 
\end{proposition}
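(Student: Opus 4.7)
The plan is to construct $\alpha\wedge\mu$ locally by analytic continuation and then verify that the resulting current lies in $\W(X)$; uniqueness will be forced by the SEP. Working locally, I would use that $\alpha$ almost semi-meromorphic means there is a positive-codimension subvariety $V\subset X$ off which $\alpha$ is smooth, and I would cut $V$ out by a holomorphic tuple $h$. For $\real\,\lambda\gg 0$, the form $|h|^{2\lambda}\alpha$ is continuous and vanishes on $V$, so $|h|^{2\lambda}\alpha\wedge\mu$ is a bona fide current in that half-plane. I would then define
\begin{equation*}
\alpha\wedge\mu := |h|^{2\lambda}\alpha\wedge\mu\bigm|_{\lambda=0},
\end{equation*}
as the value at $\lambda=0$ of the analytic continuation.

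The main technical input is existence of this continuation. Locally, $\mu$ is a finite sum of pushforwards $\pi_*\tau$, with $\tau$ elementary on some $X^m$ and $\pi$ a composition of modifications, simple projections, and open inclusions. Since pushforward commutes with multiplication by continuous forms pulled back from the base, as in \eqref{boll}, I obtain
\begin{equation*}
|h|^{2\lambda}\alpha\wedge\pi_*\tau = \pi_*\bigl(|\pi^*h|^{2\lambda}\,\pi^*\alpha\wedge\tau\bigr).
\end{equation*}
Here $\pi^*\alpha$ is again almost semi-meromorphic on $X^m$: under modifications this was remarked in the excerpt, under open inclusions it is restriction, and under a simple projection $X^{j-1}\times Z\to X^{j-1}$ it is realized by $\alpha\otimes 1$. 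I would then resolve further via a smooth modification $p\colon Y\to X^m$ so that $p^*(\pi^*\alpha)$ is genuinely semi-meromorphic of the form $\sigma/g$, and so that $p^{-1}\{h\pi=0\}$ together with the support of the residue factors of $p^*\tau$ define a normal-crossings divisor. In such coordinates the question reduces to the analytic continuation of elementary expressions of the form $|x|^{2\lambda}/x^a$ and $\dbar|x|^{2\lambda}/x^a$, which is classical. The value at $\lambda=0$ is then a pushforward of pseudomeromorphic currents, hence itself lies in $\PM(X)$; and on $X\setminus V$ it coincides with the naive product because $|h|^{2\lambda}\to 1$ uniformly there. Independence of the choice of $h$ defining $V$ proceeds exactly as for \eqref{restrikdef}.

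To establish the SEP, I would fix a positive-codimension subvariety $W\subset X$ and show $\mathbf{1}_W(\alpha\wedge\mu)=0$. Since $\alpha$ is smooth on $X\setminus V$, \eqref{skata} applied there gives that $\mathbf{1}_{X\setminus V}\mathbf{1}_W(\alpha\wedge\mu)=\alpha\wedge\mathbf{1}_{X\setminus V}\mathbf{1}_W\mu$, which vanishes because $\mu\in\W(X)$. Hence $\mathbf{1}_W(\alpha\wedge\mu)$ is pseudomeromorphic with support in $V\cap W$. Iterating with $V\cap W$ in place of $W$ (or arguing directly on the resolution used above, where $V$ and $W$ pull back to normal-crossings components whose $\1$-restrictions can be controlled using the SEP of $\mu$ upstairs), one whittles the support down until the dimension principle forces it to vanish. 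Uniqueness is then immediate: any two extensions in $\W(X)$ of the naive product differ by a current in $\W(X)$ supported on $V$, which is zero by SEP. The hard part will be the resolution/normal-crossings step for the analytic continuation, together with ensuring that the SEP really survives the interaction between the residue factors of $\tau$ and the almost semi-meromorphic factor $\pi^*\alpha$; once a sufficiently fine simultaneous resolution is chosen, both issues collapse to the one-variable situation.
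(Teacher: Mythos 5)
Your construction of the extension---regularizing with $|h|^{2\lambda}$, writing $\mu=\pi_*\tau$, and passing to a further smooth modification on which the semi-meromorphic factor and the elementary current coexist so that the analytic continuation reduces to the classical one-variable statements---is essentially the paper's existence argument, and your uniqueness argument via the SEP is exactly the paper's. One notational caveat: $p^*\tau$ is not defined for a current $\tau$; what you actually need, and what the paper arranges via the commutative diagram \eqref{kommdiagram}, is a pseudomeromorphic current $\hat\tau$ on the resolution with $p_*\hat\tau=\tau$.

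The genuine gap is in your verification of the SEP. Your argument shows that $\1_W(\alpha\wedge\mu)$ vanishes on $X\setminus V$ and hence has support in $V\cap W$, but the proposed iteration then stalls: replacing $W$ by $V\cap W$ yields support in $V\cap(V\cap W)=V\cap W$ again, so nothing shrinks, and the dimension principle does not apply because $\codim (V\cap W)$ need not exceed the antiholomorphic degree of $\alpha\wedge\mu$ (take $W=V$). What is missing is precisely the statement $\1_V(\alpha\wedge\mu)=0$, i.e., that the regularized extension puts no mass on $V$ itself. The paper extracts this directly from the definition $\nu:=|h|^{2\lambda}\alpha\wedge\mu|_{\lambda=0}$ by the reassociation $\1_{X\setminus V}\nu=|h|^{2\lambda'}\nu|_{\lambda'=0}=|h|^{2\lambda''}\alpha\wedge\mu|_{\lambda''=0}=\nu$; combining this with \eqref{rakneregel} and the support statement you already have gives $\1_W\nu=\1_{V\cap W}\nu=\1_W\1_V\nu=0$. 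Your parenthetical fallback of controlling the restrictions upstairs ``using the SEP of $\mu$'' does not obviously close the argument either, since the lift $\hat\tau$ of $\tau$ carries residue factors and therefore does not have the SEP on the resolution; the reassociation identity is the clean way to finish.
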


It is natural to let $\alpha\wedge \mu$ denote this extension as well.

\begin{proof} %%Since the statement is local, we may assume that $\alpha$ is semi-meromorphic on $X$.
For the uniqueness of an extension of $\alpha\wedge \mu$ in $\W(X)$ we only have to observe that two currents with 
the SEP on $X$, that coincide outside an analytic set of positive codimension, must be equal everywhere.

Let $V=\{h=0\}\subset X$ be an analytic subset of positive codimension such that $\alpha$ is smooth in $X\setminus V$.
%% and $V$ does not contain any irreducible component of $X$ where $\alpha$ is generically
%%non-vanishing. 
We claim that the analytic continuation
\begin{equation}\label{ball}
|h|^{2\lambda}\alpha \wedge \mu |_{\lambda=0}
\end{equation}
exists and defines a current $\nu \in \PM(X)$. It is clearly a pseudomeromorphic extension of 
$\alpha\wedge \mu$ from $X\setminus V$ to $X$, and since $\mu$ has the SEP, it follows that 
$\mathbf{1}_A \nu = 0$ in $X\setminus V$ for any analytic subset $A\subset X$ with positive codimension.
Moreover, 
\begin{eqnarray*}
\mathbf{1}_{X\setminus V}\nu &=& |h|^{2\lambda'}\nu|_{\lambda'=0} =
\big(|h|^{2\lambda'} |h|^{2\lambda} \alpha \wedge \mu |_{\lambda=0}\big)|_{\lambda'=0} \\
&=&
|h|^{2\lambda''} \alpha\wedge \mu |_{\lambda''=0} =\nu,
\end{eqnarray*}
and in view of \eqref{rakneregel} we can conclude that $\nu$ has the SEP in $X$ and thus is in  $\W(X)$.

To prove the claim, let $\mu=\pi_* \tau$,
where $\pi\colon X^m \to X$ is a composition of modifications,  simple projections, and open inclusions 
as in \eqref{PMdef}, and $\tau$ is elementary on $X^m$.
Let $p\colon \tilde{X}\to X$ be a smooth modification and let $\tilde{\alpha}$ be a semi-meromorphic current on 
$\tilde{X}$ such that $p_* \tilde{\alpha}=\alpha$.
By a standard argument  \eqref{PMdef} can be extended to a commutative diagram 
\begin{equation}\label{kommdiagram}
\begin{array}[c]{ccccccccc}
\tilde{X}^m & \stackrel{\tilde{\pi}^m}{\longrightarrow} & \cdots & \stackrel{\tilde{\pi}^2}{\longrightarrow} & \tilde{X}^1 & \stackrel{\tilde{\pi}^1}{\longrightarrow} & \tilde{X}_0 & = & \tilde{X} \\
\downarrow \scriptstyle{p_m} & & & & \downarrow \scriptstyle{p_1} & & \downarrow \scriptstyle{p} & & \\
X^m & \stackrel{\pi^m}{\longrightarrow} & \cdots & \stackrel{\pi^2}{\longrightarrow} & X^1 & \stackrel{\pi^1}{\longrightarrow} & X_0 & = & X
\end{array}
\end{equation}
so that each vertical map is a modification and each $\tilde{\pi}^j\colon \tilde{X}^j\to \tilde{X}^{j-1}$
is either a modification, a simple projection, or an open inclusion. 
%%To this end, assume that this is done up to level $k$.
%%It is well-known that if $\pi^{k+1}\colon X^{k+1}\to X^{k}$ is a modification, then there are 
%%modifications $\tilde{\pi}^{k+1}\colon \tilde{X}^{k+1}\to \tilde{X}^{k}$ and 
%%$p^{k+1}\colon \tilde{X}^{k+1}\to X^{k+1}$ such that 
%%\begin{equation*}
%%\begin{array}[c]{ccc}
%%\tilde{X}^{k+1} & \stackrel{\tilde{\pi}^{k+1}}{\longrightarrow} & \tilde{X}^k \\
%%\downarrow \scriptstyle{p_{k+1}} & & \downarrow \scriptstyle{p_k} \\
%%X^{k+1} & \stackrel{\pi^{k+1}}{\longrightarrow} & X^k
%%\end{array}
%%\end{equation*}
%%commutes. If instead $X^{k+1}=X^k \times Z$ then we simply take
%%$\tilde{X}^{k+1}=\tilde{X}^k \times Z$. 
Now, let $\tilde{\pi}$ denote the composition of the maps in the 
first row of \eqref{kommdiagram}. 
Then $\hat\alpha:=\tilde{\pi}^* p^* \alpha=\tilde\pi^*\tilde\alpha$ is semi-meromorphic 
on $\tilde{X}^m$ and moreover, there is an elementary (or at least a pseudomeromorphic 
in the sense of \cite{AW2}) current $\hat\tau$ on $\tilde{X}^m$
such that $(p_m)_* \hat\tau = \tau$. 
It follows from Proposition~2.1 in \cite{AW2} that 
\begin{equation*}
\lambda \mapsto |\tilde{\pi}^* p^* h |^{2\lambda} \hat\alpha \wedge \hat\tau
\end{equation*}
has an analytic continuation to $\real \, \lambda > -\epsilon$ and that the value at $\lambda=0$ defines a 
pseudomeromorphic current $\hat\nu$ on $\tilde{X}^m$. Thus, the analytic continuation
\eqref{ball} exists and 
$\nu = |h|^{2\lambda}\alpha \wedge \mu |_{\lambda=0} = p_*\tilde{\pi}_*  \hat\nu \in \PM(X)$.
\end{proof}

\begin{example}\label{ba}
Let $W$ be a hypersurface in $X$.
We claim that if  $\alpha\in \PM_{k,0}(X)$ and  the restriction $\alpha'$ 
to $X\setminus W$ is holomorphic,   then  $\alpha$ is meromorphic on $X$.
In fact, by assumption, $\alpha'$ has a current extension to $X$, so 
if we have an embedding  $i \colon X\hra \Omega$, then the current $i_*\alpha'$, 
a~priori defined in $\Omega\setminus W$,  has a current extension to $\Omega$.
By  \cite[Theorem~1]{HePa},  $\alpha'$ has
a meromorphic extension $\tilde\alpha$ to $X$, and since both $\alpha$ and $\tilde\alpha$
are in $\PM^X_{k,0}$,   $\alpha=\tilde\alpha$  by the dimension principle.

Let $a$ be a meromorphic form in $\Omega$  such that $\alpha=i^*a$. Then
$i_*\alpha=a\w[X]$, where $[X]$ is the Lelong current associated with $X$ in $\Omega$,
so
$\dbar \alpha=0$ on $X$ precisely means that
$\dbar(a\w[X])=0$ in $\Omega$. 
This in turn by the definition in \cite{HePa} means that 
$\alpha$ is in the sheaf (that we denote) $\Ba^X_k$
of Barlet-Henkin-Passare holomorphic $(k,0)$-forms.
%%%
We conclude that $\Ba^X_k$ is the subsheaf of $\dbar$-closed currents in $\PM^X_{k,0}$.
The sheaves  $\Ba^X_k$ were defined in this way in \cite{HePa}
but introduced earlier by  Barlet, \cite{Barlet}, in a different  way, 
see  \cite[Remark~5]{HePa}.  
\end{example}

\begin{example}\label{chprod}
Let $a_1,\ldots,a_p$ be holomorphic on $X$. Following \cite{AW2} we can inductively define
the \pmm currents
$$
\mu_{k+1}=\dbar\frac{1}{a_{k+1}}\w\dbar\frac{1}{a_k}\w\dots\w\dbar\frac{1}{a_1}:=
\frac{\dbar|a_{k+1}|^{2\lambda}}{a_{k+1}}\w\dbar\frac{1}{a_k}\w\dots\w\dbar\frac{1}{a_1}\Big|_{\lambda=0}.
$$
Then  $\mu_p$ has support on the set $V=\{a_p=\cdots=a_1=0\}$.
If $X$ is smooth and $\codim V=p$, then $\mu_p$  is the so-called {\it Coleff-Herrera product};
see \cite{BS} and \cite{LS}  for a thourough discussion of various possible definitions.
\end{example}

\section{The notion of  structure form $\omega$ on $X$}\label{Rsektion}

To begin with, let $i\colon X \hra \Omega$ be a (reduced) hypersurface  in a pseudoconvex domain
$\Omega\subset\C^{n+1}$, i.e., $X=\{f=0\}$ where $f$ is holomorphic in $\Omega$ and $df\neq 0$
on $X_{reg}$.  If $\omega'$ is a meromorphic $(n,0)$-form in $\Omega$ 
(or in a small \nbh of $X$ in $\Omega$)
such that
\begin{equation}\label{lerayform}
(df/2\pi i) \w \omega'=d\zeta_1\w\ldots\w d\zeta_{n+1}
\end{equation}
on $X$, then $\omega:=i^*\omega'$ is a meromorphic form on $X$ that is independent of the choice of
$\omega'$, and the classical Leray  residue formula states that for test forms $\psi$ of bidegree
$(0,n-1)$, the principle value integral
$$
\int_X\omega\w i^*\psi
$$
is equal to the action of the residue current $\dbar(1/f)$ on  the test form 
$\psi d\zeta_1\w\ldots\w d\zeta_{n+1}$.
%%where $d\zeta:=d\zeta_1\w\ldots\w d\zeta_{n+1}$.
This equality can be rephrased as 
\begin{equation}\label{leray}
i_*\omega=\dbar\frac{1}{f}\w d\zeta_1\w\ldots\w d\zeta_{n+1}.
\end{equation}
If $\partial f/\partial \zeta_{n+1}$ is not vanishing identically on (any irreducible component of)
$X$, one can take, e.g., $\omega'=1/(\partial f/\partial \zeta_{n+1})d\zeta_1\w\ldots\w d\zeta_n$. 
Notice that under this assumption on $f$, 
any meromorphic form on $X$ can be written $h d\zeta_1\w\ldots\w d\zeta_n$ for a unique
meromorphic function $h$.
%%The formula \eqref{leray} follows quite immediately from the Poincar\'e-Lelong formula
%%$\dbar(1/f)\w df/2\pi i=[X]$, where $[X]$ is the Lelong current in $\Omega$ associated with  $X$.
%%
It follows from \eqref{leray} that $\dbar\omega=0$ so $\omega$ 
is in $\Ba_n(X)$, cf., Example~\ref{ba}.  The form $\omega$  also has the
following two properties:

\smallskip
\noindent$(i)$ If $\phi$ is a meromorphic function  on $X$, then
$\phi$ is in $\Ok^X$ if (and only if)   $\dbar(\phi\omega)=0$.

\smallskip
\noindent$(ii)$  If $\alpha$ is in $\Ba^X_n$ then $\alpha= h\omega$ for some $h$ in $\Ok^X$.

\smallskip
%%Thus $(i)$ gives a criterion  for a meromorphic function to be holomorphic,
%%whereas  
%%
%%$(ii)$ means that $\omega$ is a generator for the $\Ok$-module $\Ba^X_n$.
%%
\noindent Since any  meromorphic $(n,0)$-form $\alpha$ is 
$h\omega$ for a unique meromorphic function $h$, 
$(i)$ and $(ii)$ are in fact equivalent; for a proof of $(i)$, see, e.g.,
\cite[Remark~3]{HePa} or below.

%%\marginpar{hitta system i specialindex for intrinsica saker!!}

\medskip
For the rest of this section let $i\colon X \hookrightarrow \Omega\subset\C^N$
be a pure $n$-dimensional subvariety of the pseudoconvex domain $\Omega$, and let
$p:=N-n$ be its codimension. 
We will introduce an almost semimeromorphic form  $\omega$ on $X$,  that satisfies
an analogue of \eqref{leray}. %%We call it  a {\it structure form} on  $X$. 
In a reasonable sense it will also fulfill  $(i)$ and $(ii)$. 
It can be noted that $\omega$ plays a central role in \cite{ASS}.
To begin with we look for  an 
adequate generalization of the residue current $\dbar(1/f)$. 

If $f$ is any holomorphic function, then 
a holomorphic function $\phi$ is in the ideal $(f)$ generated by $f$ if and only if
$\phi\dbar(1/f)=0$. 
Given any ideal sheaf $\J$ in $\Omega$, in \cite{AW1}  was  constructed a residue current
$R$ such that 
\begin{equation}\label{Rduality}
\phi\in\J \text{\ if\ and\ only\ if } \   \phi R=0 
\end{equation}
if $\phi\in\Ok^\Omega$.
It is thus reasonable to consider  $R$ when $\J=\J_X$ is
the radical ideal sheaf $\J=\J_X$  associated with $X$, so let us first recall its
definition.
In a slightly smaller set, still denoted $\Omega$, there is  a free
resolution
\begin{equation}\label{karvupplosn}
0 \to \hol(E_m) \stackrel{f_m}{\longrightarrow} \cdots \stackrel{f_3}{\longrightarrow} \hol(E_2)
\stackrel{f_2}{\longrightarrow} \hol(E_1) \stackrel{f_1}{\longrightarrow} \hol(E_0) 
\end{equation}
of the sheaf $\hol^{\Omega}/\J_X$; 
here  $E_k$ are trivial vector bundles over $\Omega$ and $E_0\simeq \C\times \Omega$ is the
trivial line bundle. This resolution induces a complex of  vector bundles
\begin{equation}\label{VBkomplex}
0 \to E_m \stackrel{f_m}{\longrightarrow} \cdots \stackrel{f_3}{\longrightarrow} E_2
\stackrel{f_2}{\longrightarrow} E_1 \stackrel{f_1}{\longrightarrow} E_0
\end{equation}  
that is pointwise exact outside $X$. Let $X_k$ be the set where $f_k$ does not have optimal rank. Then
\begin{equation*}
\cdots \subset X_{k+1} \subset X_k \subset \cdots \subset X_{p+1}\subset X_{sing}\subset X_p =\cdots =X_1= X;
\end{equation*}
these sets are independent of the choice of resolution and  thus invariants of the sheaf $\F:=\hol^{\Omega}/\J_X$.
%%The Buchsbaum-Eisenbud theorem claims that $\mbox{codim}\, X_k \geq k$ for all $k$, and since furthermore
Since $\F$ has {\it pure} codimension $p$ (i.e., no embedded prime ideals), 
%%in our case, $X_k\subset X_{sing}$, and 
\begin{equation}\label{BEsats}
\mbox{codim}\, X_k \geq k+1, \quad \mbox{for} \quad k\geq p+1,
\end{equation}
see Corollary 20.14 in \cite{Eis}. There is a free resolution \eqref{karvupplosn} 
if and only if $X_k=\emptyset$ for $k>m$.
Thus we can always have $m\le N-1$. 
%%The smallest number $m$ such that $X_k=\emptyset$ for $k>m$ is equal to 
%%$N-\nu$, where $\nu$ is the minimal depth (cohomological dimension) of $\F$.
The variety is Cohen-Macaulay, i.e., the sheaf $\F$ is Cohen-Macaulay, if and only if 
$X_k=\emptyset$ for $k\geq p+1$. In this case we can thus choose a resolution \eqref{karvupplosn}
with $m=p$. 
If we define
\begin{equation}\label{oskar}
X^0=X_{sing}, \quad X^r=X_{p+r}, \,\, r\geq 1,
\end{equation} 
then 
\begin{equation*}
X^{n-1}\subset\cdots \subset X^1\subset X^0\subset X,\quad \quad  \codim X^k\ge k+1.
\end{equation*}
The sets $X^k$ are  independent 
of the choice of embedding, see \cite[Lemma~4.2]{AWsemester},
and  are thus intrinsic subvarieties of the complex space $X$ 
and   reflect  the complexity  of the singularities of $X$.

\medskip

\noindent Let us now choose Hermitian metrics on the bundles $E_k$. We then refer to
\eqref{karvupplosn} as a {\it Hermitian free resolution} of $\Ok^X/\J_X$ in $\Omega$. 
In $\Omega\setminus X_k$ we have a well-defined vector bundle
morphism $\sigma_{k+1}\colon E_k\to E_{k+1}$, if we require that  $\sigma_{k+1}$ vanishes on
$(\Image f_{k+1})^\perp$, takes values in $(\Ker f_{k+1})^\perp$ and that $f_{k+1}\sigma_{k+1}$ is
the identity on $\Image f_{k+1}$.   Following  \cite{AW1} we  define the smooth 
$E_k$-valued forms
\begin{equation}\label{ukdef}
u_k=(\dbar\sigma_k)\cdots(\dbar\sigma_2)\sigma_1=\sigma_k(\dbar\sigma_{k-1})\cdots(\dbar\sigma_1)
\end{equation}
in $\Omega\setminus  X$; for the second equality, see \cite[(2.3)]{AW1}. We have that 
$$
f_1u_1=1, \quad f_{k+1}u_{k+1}-\dbar u_k=0,\quad  k\ge 1,
$$
in $\Omega\setminus  X$.
If $f:=\oplus f_k$ and $u:=\sum u_k$,  then these relations can be written
economically as $\nabla_f u=1$ where
$\nabla_f:= f-\dbar$.
To make the algebraic machinery  work properly one has to introduce a superstructure on the
bundle $E=:\oplus E_k$ so that vectors in $E_{2k}$  are even and vectors in  $E_{2k+1}$ are odd,
and hence $f$, $\sigma:=\oplus\sigma_k$,  and  $u:=\sum u_k$  are odd.
For details, see \cite{AW1}. 
It turns out that $u$ has a (necessarily unique)  
almost semi-meromorphic extension $U$ to $\Omega$,  and the current
$R$ is  defined by the relation
$$
\nabla_f U=1-R.
$$
If $F$ is any holomorphic tuple that vanishes on $X$, then
\begin{equation}\label{rdef}
U=|F|^{2\lambda}u|_{\lambda=0}, \quad R=\dbar|F|^{2\lambda}\w u|_{\lambda=0}.
\end{equation}
Thus  $R$ has support on $X$ and is a sum $\sum R_k$, where $R_k$ is
a \pmm $E_k$-valued current of bidegree $(0,k)$. It follows from  the dimension
principle that $R=R_p+R_{p+1}+\cdots +R_N$. Since we can always choose a resolution that
ends at level $N-1$, cf., \eqref{BEsats},  we may assume that $R_N=0$. 
If $X$ is Cohen-Macaulay and $m=p$ in \eqref{karvupplosn}, then $R=R_p$ is $\debar$-closed;
in general, $R$ is $\nabla_f$-closed.

\begin{remark}
If $\J$ is an arbitrary ideal sheaf and $R$ is defined in the same way as above, then
\eqref{Rduality} holds, \cite{AW1}.  In case $\J$ is Cohen-Macaulay, one can express
this duality in a way that only involves the smooth form  $u$ in  $\Omega\setminus X$,
where $X$ is the zero set of $\J$, see \cite[Theorem~4.2]{AW1}. 
This result was recently proved algebraically in \cite{Lund} with no reference to
residue calculus and resolution of singularities. 
%%The statement  is  a generalization
%%of the classical Grothendieck duality for a zero-dimensional complete intersection ideal, and its
%%generalization to complete intersections due to Passare and Dickenstein-Sessa.
\end{remark}

\begin{remark}
In case $\J$ is generated by the single function $f$, then we  have the free  resolution
$0\to \Ok \stackrel{f}{\to} \Ok\to\Ok/(f)\to 0$;
thus $U$ is just the
principal value current $1/f$ and $R=\dbar(1/f)$.    
\end{remark}

Notice that \eqref{karvupplosn}  gives rise to the dual Hermitian complex
\begin{equation}\label{dualkomplex}
0\to \Ok(E_0^*) \stackrel{f^*_1}{\to}\cdots \to \Ok(E^*_{p-1})\stackrel{f^*_p}{\to}
\Ok(E_p^*)\stackrel{f^*_{p+1}}\longrightarrow\cdots .
\end{equation}
Since the sheaf $\Kers(\Ok(E_p^*)\stackrel{f^*_{p+1}}{\to}\Ok(E^*_{p+1}))$ is coherent,
there is a  (trivial) Hermitian vector bundle $F$ in  $\Omega$ and a holomorphic
morphism  $g\colon E_p\to F$ such that 
\begin{equation}\label{skutt}
\Ok(F^*)\stackrel{g^*}{\to}\Ok(E_p^*)\stackrel{f^*_{p+1}}{\to}\Ok(E^*_{p+1})
\end{equation}
is exact. 
Since $f_{p+1}$ has constant rank outside $X_{p+1}$, also $f_{p+1}^*$ has, and  it follows that
$g$ has as well. 
%%In fact, $g$ is just the first step in a free resolution of
%%the sheaf $\mathcal G=\Kers(\Ok(E_p^*)\stackrel{f^*_{p+1}}{\to}\Ok(E^*_{p+1}))$.
Outside $X_{p+1}$ we can thus define the mapping $\sigma_F\colon F\to E_p$ such that
$\sigma_F=0$ on $(\Image g)^\perp\subset F$, $\sigma_F g=Id$ on $(\Ker g)^{\perp}=(\Image f_{p+1})^\perp$
and $\Image \sigma_F$ is orthogonal to $\Ker g$. 
If $m=p$, then we can take $F=E_p$ and $g=Id$.

Let $d\zeta:=d\zeta_1\w\ldots\w d\zeta_N$. We also introduce the notation 
\begin{equation*}
E^r:=E_{p+r} |_{X}, \quad f^r:=f_{p+r}|_X
\end{equation*} 
so that $f^r$ becomes a holomorphic section of $\mbox{Hom}\, (E^{r}, E^{r-1})$.
Notice that for  $k\ge 1$,  $\alpha^k:=i^*\dbar\sigma_{p+k}$ are smooth in $X\setminus X^k$.

%%Via the structure form 
%%we can express the current $R$ as a principal value current {\em on} the variety $X$, cf.\ \eqref{DLrep0} and
%%\eqref{DLrep}.  Behavs denna extra kommentar????

\begin{proposition}\label{fundprop}
Let \eqref{karvupplosn} be a Hermitian free resolution of $\hol^{\Omega}/\J_X$ in $\Omega$ 
and let $R$ be the associated 
residue current.
Then there is a unique almost semi-meromorphic current
\begin{equation*}
\omega=\omega_0 + \omega_1 + \cdots + \omega_{n-1}
\end{equation*}
on $X$, where $\omega_{r}$ has bidegree $(n,r)$ and takes values in $E^r$, such that 
\begin{equation}\label{DLrep}
i_* \omega = R\wedge d\zeta.
\end{equation}
Moreover, %%The current $\omega$ is smooth on $X_{reg}$ and 
\begin{equation}\label{omega}
f^0 \omega_0 =0,\quad f^{r} \omega_{r} = \debar \omega_{r-1}, \,\, r\geq 1, \,\,\,\, \mbox{on} \,\,\, X,
\end{equation}
and 
\begin{equation}\label{omegauppsk}
|\omega| = \ordo (\delta^{-M})
\end{equation}
for some $M>0$, where $\delta$ is the distance to $X_{sing}$.

Assume that \eqref{skutt} is exact. 
The forms $\alpha^k$, $1\le k\le n-1$, defined and smooth outside $X^k$, and
$\sigma_F$, defined and smooth outside $X^1$,  extend to almost semimermorphic currents on  $X$.
%%%
%%
There is an $F$-valued section $\vartheta$ of $\Ba^X_n$ such that
\begin{equation}\label{omeganoll}
\omega_0=\sigma_F \vartheta.
\end{equation}
Moreover,
\begin{equation}\label{omega2}
\omega_{r}=\alpha_{r}\omega_{r-1}, \quad r\geq 1, \quad \mbox{on} \,\,\, X.
\end{equation}
%%%
\end{proposition}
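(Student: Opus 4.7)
The plan is to construct $\omega$ by descending $R\w d\zeta$ from $\Omega$ to $X$, using the correspondence recalled in Section~\ref{PMsektion} between currents on $X$ and currents on $\Omega$ supported on $X$ that are annihilated by $\bar\xi$ and $d\bar\xi$ for $\xi\in\J_X$. Since $R$ is pseudomeromorphic with support on $X$ by \eqref{rdef}, part (i) of Proposition~\ref{grad/stod} gives $\bar\xi R=0$ and $d\bar\xi\w R=0$ for every $\xi\in\J_X$, and these conditions persist after wedging with the holomorphic form $d\zeta$. The component $R_{p+r}\w d\zeta$, of total bidegree $(N,p+r)$ with $E_{p+r}$-values, therefore equals $i_*\omega_r$ for a unique $E^r$-valued $(n,r)$-current $\omega_r$ on $X$; summing over $r$ yields existence and uniqueness of $\omega$ satisfying \eqref{DLrep}. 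The recursions \eqref{omega} then follow from $\nabla_f R = 0$ (a consequence of $\nabla_f^2 = 0$ and $\nabla_f U = 1-R$): matching the $E_{p+r-1}$-valued components gives $f_{p+r}R_{p+r} = \debar R_{p+r-1}$ for $r\ge 1$ and $f_p R_p = 0$, which transfer to $X$ through $i_*$.

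For the almost semi-meromorphic property of $\omega$ and the estimate \eqref{omegauppsk}, I would work on a smooth modification $\tilde\Omega\to\Omega$ obtained by suitable blow-ups along $X$ and the $X_k$: on $\tilde\Omega$ each $\sigma_{p+k}$ becomes a semi-meromorphic form whose polar divisor has normal crossings, so each $u_{p+k}$ extends almost semi-meromorphically. The residue $R = \debar|F|^{2\lambda}\w u|_{\lambda=0}$ is then realized as a pushforward of products of such forms with the integration current of the proper transform of $X$, and applying $i_*^{-1}$ removes this Lelong factor and leaves $\omega$ as an almost semi-meromorphic form on $X$; the pole orders along the exceptional divisor bound the exponent $M$ in \eqref{omegauppsk}. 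The almost semi-meromorphic extensions of $\sigma_F$ and of each $\alpha^k = i^*\debar\sigma_{p+k}$ follow from the same resolution argument applied to the Moore-Penrose pseudoinverses of $g$ and $f_{p+k}$, whose ranks become constant on the modification.

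The formula \eqref{omega2} is then obtained by lifting the identity $u_{p+r} = (\debar\sigma_{p+r})u_{p+r-1}$, valid on $\Omega\setminus X$, to the level of residues: Proposition~\ref{multprop} lets us form the product of the almost semi-meromorphic form $\debar\sigma_{p+r}$ with $R_{p+r-1}\in\W(\Omega)$, yielding $R_{p+r} = (\debar\sigma_{p+r})\w R_{p+r-1}$, which pulls back to \eqref{omega2} on $X$. For \eqref{omeganoll}, observe that $u_p$ takes values in $\Image\sigma_p\subset(\Ker f_p)^\perp = (\Image f_{p+1})^\perp = \Image\sigma_F$ outside $X$; this property persists under the residue and the descent, so $\omega_0$ is $\Image\sigma_F$-valued as an almost semi-meromorphic form on $X$. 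Setting $\vartheta := g\omega_0$ gives an $F$-valued almost semi-meromorphic $(n,0)$-current on $X$, and the Moore-Penrose identity $\sigma_F g = \Pi_{(\Ker g)^\perp}$ together with $(\Ker g)^\perp = (\Image f_{p+1})^\perp$ yields $\sigma_F\vartheta = \omega_0$. Finally $\debar\vartheta = g\debar\omega_0 = gf^1\omega_1 = 0$ because $gf_{p+1} = 0$ (the dual of $f_{p+1}^*g^* = 0$ coming from exactness of \eqref{skutt}), so $\vartheta\in\Ba^X_n\otimes F$ by Example~\ref{ba}. The hardest step is the almost semi-meromorphic extension of $\omega$: tracking the residue structure of $R$ carefully enough to see that, after dividing out the Lelong current of $X$, what remains is genuinely almost semi-meromorphic rather than merely pseudomeromorphic with the SEP.
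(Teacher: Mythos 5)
Your outline recovers several of the paper's key ingredients --- the factorization $\omega_0=\sigma_F\vartheta$ with $\vartheta=g\omega_0$, the use of $gf_{p+1}=0$ and Example~\ref{ba} to place $\vartheta$ in $\Ba^X_n$, the Hironaka argument for $\sigma_F$ and the $\alpha^k$, and $\nabla_fR=0$ for \eqref{omega} --- but your foundational step is not justified. You produce $\omega$ by an abstract descent: a current supported on $X$ and annihilated by $\bar\xi$ and by $d\bar\xi\wedge\cdot$ for $\xi\in\J_X$ is declared to be $i_*$ of a current on $X$. The paper proves only the forward implication (Proposition~\ref{grad/stod}\,(i)); the converse amounts to saying that, near points of $X_{sing}$, the module of test forms $\psi$ with $i^*\psi=0$ on $X_{reg}$ is generated over smooth forms by $\J_X$, $\bar\J_X$ and their differentials, which is neither proved in the paper nor obvious for singular $X$. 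The paper avoids this entirely: it computes $R_p\wedge d\zeta$ explicitly near a point of $X_{reg}$, writing $R_p=\alpha\,\debar(1/a_p)\wedge\cdots\wedge\debar(1/a_1)$ via Theorem~4.4 of \cite{AW1} and invoking Poincar\'e--Lelong to get $R_p\wedge d\zeta=\pm(2\pi i)^p\alpha\,(d\zeta'/A)\wedge[X]$, which \emph{defines} $\omega_0$ on $X_{reg}$; the extension across $X_{sing}$, the almost semi-meromorphic property, and \eqref{omegauppsk} then all come from $\omega_0=\sigma_F\vartheta$ with $\vartheta$ meromorphic and $\omega_r=\alpha^r\cdots\alpha^1\omega_0$. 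Your alternative --- ``dividing out the Lelong factor'' on a modification of $\Omega$ --- is precisely the content that needs proof, not an argument; note also a circularity: applying Example~\ref{ba} to $\vartheta$ presupposes that $\omega_0$ is already known to be a pseudomeromorphic current on all of $X$.

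A second concrete gap: you invoke Proposition~\ref{multprop} to form $(\debar\sigma_{p+r})\wedge R_{p+r-1}$ on the grounds that ``$R_{p+r-1}\in\W(\Omega)$''. This is false: $R_{p+r-1}$ has support on $X$, so $\1_XR_{p+r-1}=R_{p+r-1}\neq0$ and the SEP fails in $\Omega$. Moreover, the identity $R_{p+r}=\alpha_{p+r}R_{p+r-1}$ holds a priori only where $\sigma_{p+r}$ is smooth, i.e.\ off $X_{p+r}$, so your argument establishes \eqref{omega2} (equivalently $i_*\omega_r=R_{p+r}\wedge d\zeta$) only outside $X_{p+r}$. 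Closing this requires the regularization $\chi_\delta=\chi(|h|/\delta)$ combined with the dimension principle and the codimension bound \eqref{BEsats} --- the discrepancy has bidegree $(0,p+r)$ and support on $X_{p+r}$, which has codimension at least $p+r+1$ --- and this is exactly the induction the paper carries out to prove \eqref{olvon}; your proposal omits it.
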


We say that $\omega$ so obtained is a {\it structure form} on $X$. 
The products in \eqref{omeganoll} and \eqref{omega2} are well-defined by Proposition~\ref{multprop}.
Notice that if $X$ is Cohen-Macaulay and $m=p$, then $\omega_0$ is an $E^0$-valued section of $\Ba^X_n$. 

%%Observe that it follows in particular that $R\wedge \Phi =0$ if $\Phi$ is a smooth 
%%$(0,*)$-form in $\Omega$ whose pullback to $X_{reg}$ vanishes.

%%\marginpar{strukit att $R$ has SEP!}

\begin{proof}
Let $x$ be an arbitrary point on $X_{reg}$.
Since the ideal sheaf $\J_X$ is generated by the functions $f_1^j$ that constitute the 
map $f_1$, cf.\ \eqref{karvupplosn},
we can extract holomorphic functions $a_1\ldots,a_p$ from the $f_1^j$'s such
that $da_1\wedge \cdots \wedge da_p\neq 0$ at $x$. 
Possibly after a re-ordering of the variables $\zeta$ in the ambient space, we may assume that 
$\zeta=(\zeta',\zeta'')=(\zeta',\zeta''_1,\ldots,\zeta''_p)$ and that
$A:=\det (\partial a/\partial \zeta'')\neq 0$ at $x$. 
We also note that $d\zeta'\wedge da_1\wedge \cdots \wedge da_p =A d\zeta'\wedge d\zeta''=A d\zeta$ close to $x$.

Now, $\J_X$ is generated by $a=(a_1,\ldots,a_p)$ at $x$ and so the Koszul complex generated by the $a_j$ provides
a minimal resolution of $\hol^{\Omega}/\J_X$ there. The associated residue current $R^a=R^a_p$ is just the 
Coleff-Herrera product formed from the tuple $a$, cf., Section~\ref{exsektion}.
The original resolution \eqref{karvupplosn} contains the Koszul complex as a direct summand in a neighborhood of
$x$ and so it follows from Theorem~4.4 in \cite{AW1} that  
%%(blbala complete intersection is CM) 
%%
\begin{equation}\label{senap}
R_p=\alpha \, \debar \frac{1}{a_p}\wedge \cdots \wedge \debar \frac{1}{a_1},
\end{equation}
where $\alpha$ is a smooth section of $E_p$ close to $x$. By the Poincar\'e-Lelong formula thus 
\begin{eqnarray}\label{lok}
R_p\wedge d\zeta &=& \pm\alpha \, \debar \frac{1}{a_p}\wedge \cdots \wedge \debar \frac{1}{a_1} \wedge
da_1\wedge \cdots \wedge da_p \wedge \frac{d\zeta'}{A} \\
&=&
\pm (2\pi i)^p \alpha \frac{d\zeta'}{A}\wedge [X] \nonumber
\end{eqnarray}
close to $x$. If $\omega_0$ is  the pullback of $\pm (2\pi i)^p\alpha \, d\zeta'/A$ to $X_{reg}$, then 
the preceding equation means that
\begin{equation}\label{hast}
R_p\wedge d\zeta .\, \psi = \int_X \omega_0\wedge i^*\psi,
\end{equation}
where $\psi$ is a test form with support close to $x$. %% and $\phi$ is the pullback of $\Phi$ to $X_{reg}$.
Thus $\omega_0$ is determined by $R_p$ and so it extends to a global
$E_p$-valued $(n,0)$-form on $X_{reg}$,  still denoted $\omega_0$.
%%%
Since $\sigma_{p+1}u_p=0$ outside $X_{p+1}$, cf., \eqref{ukdef}, we find that $R_p$ and hence
$\omega_0$ takes values in $(\Image f_{p+1})^\perp\subset E_p$, cf., \eqref{rdef} and
\eqref{hast}.  Thus $\omega_0=\sigma_F g\omega_0=\sigma_F\vartheta $ where  $\vartheta:=g\omega_0$.
%%%
On $X_{reg}$ we have
$$
i_*\dbar \vartheta=-i_*g\dbar\omega_0=-g\dbar i_*\omega_0=
-g\dbar R_p\w d\zeta=-g f_{p+1} R_{p+1}\w d\zeta=0
$$
%%\dbar(g\omega_0)=-g\dbar\omega_0=-g f^1\omega_1=0,
%%$$
since $gf_{p+1}=0$. Thus $\dbar\vartheta=0$ and 
from  Example~\ref{ba} we conclude that $\vartheta$ is a section of $\Ba^X_n$.

Let $\mathfrak a_F$ be the Fitting ideal of $g$, restricted to $X$, i.e., the ideal (on $X$)
generated by the $r\times r$-minors of $g$, where $r$ is the generic
rank of $g$; notice that $g$ has rank $r$ on $X\setminus X^1$.
Let $\mathfrak a_k$ be the Fitting ideals of $f^k$, $k=1,\ldots,n-1$.  
By Hironaka's theorem there is a   smooth modification $\tau\colon\tilde X\to X$ 
such that all the ideals $\tau^*\mathfrak a_F, \tau^*\mathfrak a_1,\ldots, \tau^*\mathfrak a_{n-1}$ are
principal on $\tilde X$. This means that there are  holomorphic sections
 $s_F,s_1,\ldots, s_{n-1}$ of line bundles on $\tilde X$ that generate these ideals.
It follows from \cite[Lemma~2.1]{AW1} that $\tau^*\sigma_F=\beta_F/s_F$ and
$\tau^*\sigma^k=\beta_k/s_k$, $k\ge 1$, where $\beta_F$ and $\beta_k$ are smooth.
Hence, $\tau^*\alpha^k=\dbar\beta_k/s_k$.  We conclude that
$\sigma_F$ as well as $\alpha^k$ are almost semi-meromorphic on $X$. 

Let us now define $\omega_r$ inductively by \eqref{omega2}. We claim that
\begin{equation}\label{olvon}
i_*\omega_k=R_{p+k}\w d\zeta, \quad k\ge 0.
\end{equation}
If $k=0$ it is just \eqref{hast}. Assume \eqref{olvon} is proved for $k-1$. It follows from
\eqref{ukdef} and \eqref{rdef} that $R_{p+k}=\alpha_{p+k}R_{p+k-1}$ in 
$\Omega\setminus X_{p+k+1}$. In this set we thus have that
$$
i_*\omega_{k}=i_*\alpha^{k}\omega_{k-1}=\alpha_{p+k}i_*\omega_{k-1}=
\alpha_{p+k}R_{k-1}\w d\zeta.
$$
Let $\chi_\delta=\chi(|h|/\delta)$, where $h$ is a holomorphic tuple that cuts out $X_{p+k}$,
cf., \eqref{restrikdef2}.   Then 
 $i_*(i^*\chi_\delta\omega_k)=\chi_\delta R_k\w d\zeta$.
Now $\chi_\delta R_{p+k}\to R_{p+k}$
in view of \eqref{BEsats} and the dimension principle, and $i^*\chi_\delta\omega_k\to\omega_k$,
and hence 
%%%Since $i_*(i^*\chi_\delta\omega_k)=\chi_\delta R_k\w d\zeta$ it follows that 
\eqref{olvon} holds in $\Omega$.

%%By the dimension principle (applied on $X$ as well as on $\Omega$) and \eqref{BEsats},  
%%\eqref{olvon} holds in $\Omega$. 
%%To be precise, we do not know 
%%a~priori that the left hand side is in $\PM(\Omega)$ but  ????
The estimate 
\eqref{omegauppsk} follows  since it holds for $\varTheta$, being a tuple of 
meromorphic forms on $X$ that are holomorphic on  $X_{reg}$, and for
each of $i^*\sigma_F, \alpha^1,\ldots,\alpha^{n-1}$. 
Finally, \eqref{omega} follows since $(f-\debar)R=\nabla_fR=0$. %% cf.\ \eqref{Rdef}.
\end{proof}

Let $\varTheta$ be an $F$-valued meromorphic form in $\Omega$
such that $i^*\varTheta=\vartheta$. Notice that 
$$
\varTheta=\gamma_\varTheta\lrcorner d\zeta_1\w\ldots\w d\zeta_N 
$$
for a (unique) meromorphic section of  $F\otimes \Lambda^p T^{1,0}(\Omega)$. If 
$\gamma:= \sigma_F\gamma_\varTheta+\alpha^1\sigma_F\gamma_\varTheta+\alpha^2\alpha^1\sigma_F\gamma_\varTheta+\cdots$
and $\omega':=\gamma \lrcorner d\zeta$, thus
$
\omega=i^*\omega'.
$ 
%%
%%\omega'=\gamma\lrcorner d\zeta_1\w\ldots\w d\zeta_N 
%%$
%%is a form such that $i^*\omega'=\omega$.
%%Thus
Since  $[X]\w \gamma\lrcorner d\zeta=[X]\w\omega'=i_*\omega= R\w d\zeta$,
and   $[X]\w \gamma\lrcorner d\zeta=(-1)^p\gamma\lrcorner [X]\w d\zeta$
we  have 
\begin{equation}\label{staty}
R=(-1)^p\gamma\lrcorner [X], \quad i_*\omega=[X]\w\omega'=:[X]\w\omega.
\end{equation}
%%Since $[X]\w \gamma\lrcorner d\zeta=\pm \gamma\lrcorner [X]\w d\zeta$ we also find that
%%$R=\pm\gamma\lrcorner[X]$.

\begin{center}
---
\end{center}

We will now discuss
generalizations of  $(i)$ and $(ii)$ above. It is proved in \cite{Astrong} that 
if $\Phi$ is meromorphic in $\Omega$, then
$\phi:=i^*\Phi$ is in $\Ok^X$ if and only 
if  $\nabla_f (\Phi R)=0$ in $\Omega$. Combining with  Proposition~\ref{fundprop}
we get:

\smallskip
\noindent $(i)'$\ \  {\it If $\phi$ is a meromorphic
function on $X$, then $\phi$ is in $\Ok^X$ if and only if
$\nabla_f(\phi\omega)=0$ on $X$.}

\smallskip

Let $\varOmega^k$ denote the sheaf $\Ok(\Lambda^kT^*_{1,0}(\Omega))$.
Let $\xi d\zeta$ be a section of the sheaf 
$$
\Homs_\Ok(\Ok(E_p),\varOmega^N)\simeq
\Ok(E^*_p)\otimes_\Ok\varOmega^N
$$
such that $f^*_{p+1}\xi=0$. Then
$\dbar(\xi\cdot\omega_0)=-\xi\cdot\dbar\omega_0=-\xi\cdot f_{p+1}\omega_1=
f^*_{p+1}\xi\cdot\omega_1=0$, so that $\xi\cdot\omega_0$ is in $\Ba^X_n$.
The minus signs appear since  $f$ is an odd mapping with respect to the superstrucure.
Moreover, if $\xi=f^*_p\eta$ for $\eta\in\Ok(E^*_{p-1})$, then $\xi\cdot\omega_0=
f^*_p\eta\cdot\omega_0=-\eta\cdot f_p\omega_0=0$. 
We thus have a sheaf mapping
\begin{equation}\label{greta}
\Ho^p(\Homs(\Ok(E_\bullet),\varOmega^N))\to\Ba^X_n, \quad \xi d\zeta\mapsto \xi\cdot\omega_0.
\end{equation}

\begin{proposition}\label{takyta}
The mapping \eqref{greta} is an isomorphism, and it is independent of the specific choice
of resolution, hence establishing an isomorphism
$$
\Ext^p(\Ok^\Omega/\J_X,\varOmega^N)\simeq \Ba^X_n.
$$
\end{proposition}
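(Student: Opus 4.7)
The map \eqref{greta} has already been checked to be well-defined in the paragraph preceding the proposition (if $f^*_{p+1}\xi=0$ then $\dbar(\xi\cdot\omega_0)=0$, and $\xi=f^*_p\eta$ gives $\xi\cdot\omega_0=0$), so it remains to prove (a) independence of the chosen Hermitian free resolution, and (b) that it is bijective.

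For (a), given another Hermitian free resolution $(E'_\bullet,f'_\bullet)$ with structure form $\omega'$ and residue current $R'$, standard homological algebra supplies chain maps $\varphi\colon E_\bullet\to E'_\bullet$ and $\psi\colon E'_\bullet\to E_\bullet$ lifting the identity on $\Ok^\Omega/\J_X$, unique up to chain homotopy. These induce mutually inverse isomorphisms on $\Ho^p(\Homs(\Ok(E_\bullet),\varOmega^N))$ and thus realize the intrinsic Ext. To see that \eqref{greta} is independent of the resolution it suffices to show that $\varphi^*\xi'\cdot \omega_0=\xi'\cdot\omega'_0$ in $\Ba^X_n$, for which I would invoke the transformation rule for residue currents under chain maps (as in \cite{AW1}), which gives $\varphi_p R_p - R'_p = \nabla_{f'}(\cdots)$; pairing with $\xi' d\zeta$ and using $f^{'*}_{p+1}\xi'=0$ and the dimension principle kills the correction term.

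For surjectivity I would argue locally and glue via the SEP. At a point $x\in X_{reg}$, using the setup in the proof of Proposition~\ref{fundprop}, one may choose coordinates so that a Koszul complex on a regular sequence $(a_1,\dots,a_p)$ sits as a direct summand in $E_\bullet$; then \eqref{senap} and \eqref{lok} exhibit $\omega_0$ as a nowhere-vanishing smooth multiple of $d\zeta'/A$, and \eqref{greta} specializes to the classical Poincar\'e residue isomorphism $\Ext^p(\Ok/\J_X,\varOmega^N)\simeq \varOmega^n_{X_{reg}}$. Given $\alpha\in\Ba^X_n$, I would solve $\xi\cdot\omega_0=\alpha$ on $X_{reg}$ locally, patch the local holomorphic $\xi$'s to a global section of the kernel sheaf (they differ by sections of $\Image f^*_p$ on overlaps; a standard partition-of-unity / \v{C}ech argument against the complex $E^*_\bullet$ applies), and finally observe that both $\alpha$ and $\xi\cdot\omega_0$ lie in $\PM^X$ with the SEP and agree on $X_{reg}$, hence agree on all of $X$.

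The main obstacle is injectivity: if $\xi\cdot\omega_0=0$ on $X$ (equivalently $\xi\cdot R_p\w d\zeta=0$ on $\Omega$) with $f^*_{p+1}\xi=0$, we must produce $\eta\in\Ok(E^*_{p-1})$ with $\xi=f^*_p\eta$. This is a strengthening of the Andersson--Wulcan duality principle \eqref{Rduality}: the scalar statement $\phi R=0\Leftrightarrow \phi\in\J_X$ generalises to the level of the dual complex. The plan is to pass to the Cohen--Macaulay smooth model as in the proof of Proposition~\ref{fundprop}, where $R_p$ is the Coleff--Herrera product of a regular sequence, invoke the classical Dickenstein--Sessa duality $\Ext^p(\Ok/\J_X,\varOmega^N)\simeq \CH_X^p$ there to conclude on $X_{reg}$, and then extend globally: the current $\eta$ produced on $X_{reg}$ is holomorphic and bounded (polynomially in $1/\delta$) by the almost semi-meromorphicity of $\omega$, and as a section of a coherent sheaf on the normal space $X$ it extends across $X_{sing}$ by Riemann's extension theorem applied on a suitable resolution. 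The crucial input that I would need to verify carefully is the compatibility of the residue duality at level $p$ with passage to singular points, for which the dimension principle \eqref{BEsats} and the fact $\codim X_{sing}\ge 1$ are the decisive tools.
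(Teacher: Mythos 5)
Your reduction of the problem to well-definedness, independence of the resolution, injectivity and surjectivity is correct, and your treatment of independence via comparison chain maps and the transformation rule for the currents $R$ under such maps is essentially the standard argument. The genuine gap is in the bijectivity: both your surjectivity and your injectivity arguments work locally on $X_{reg}$ (where everything reduces to the Poincar\'e residue, respectively to classical local duality for a regular sequence) and then rely on unproved gluing and extension steps across $X_{sing}$. A partition of unity cannot patch \emph{holomorphic} local solutions $\xi$, and the \v{C}ech obstruction to gluing them lives in an $H^1$ of a coherent sheaf on a punctured neighbourhood of a singular point, which does not vanish in general. Likewise, for injectivity the local potentials $\eta$ with $\xi=f_p^*\eta$ differ by sections of $\Kers f_p^*$, so producing a single $\eta$ near a point of $X_{sing}$ is again an $H^1$-problem on the complement of $X_{sing}$, not a Riemann extension of one well-defined section; note also that $\eta$ is a section of $\Ok(E_{p-1}^*)$ on the ambient $\Omega$, not of a sheaf on $X$, and $X$ is not assumed normal. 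Since the proposition is a statement about stalks at every point, including singular ones, these local-to-global steps are precisely where its content is concentrated and cannot be waved through.

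The paper sidesteps all of this by factoring \eqref{greta} through a third sheaf, cf.\ \eqref{utter}: the map $h\mapsto i_*h$ sends $\Ba^X_n$ injectively into $\Homs(\Ok^\Omega/\J_X,\CH_X)\otimes_\Ok\varOmega^N$ (Coleff--Herrera currents annihilated by $\J_X$), and the composed map $\xi d\zeta\mapsto \xi\cdot R_p\w d\zeta$ is exactly the duality isomorphism $\Ext^p(\Ok^\Omega/\J_X,\varOmega^N)\simeq\Homs(\Ok^\Omega/\J_X,\CH_X)\otimes_\Ok\varOmega^N$ already established in \cite[Theorem~1.5]{Aext} (a Dickenstein--Sessa type local duality, proved there including the behaviour at $X_{sing}$), together with its independence of the resolution. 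Since the composition is bijective and the second map is injective, both maps are bijective. To salvage your approach you would in effect have to reprove that theorem; the intended proof is to quote it.
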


This isomorphism is well-known, cf., \cite[Remark~5]{HePa}. 
%%and is the basis for the definition in \cite{Barlet}. 
Our contribution
is the realization  \eqref{greta}. Thus  $\Ba^X_n$ is coherent and we have:

\smallskip

\noindent $(ii)'$ \
{\it If $\xi_1, \ldots,\xi_\nu$ are  generators of 
$\Ho^p(\Homs(\Ok(E_\bullet^*)))$,  then 
$\eta_\ell:=\xi_\ell\cdot\omega_0, \ \ell=1,\ldots,\nu$,
generate the $\Ok^X$-module $\Ba^X_n$.}

\begin{proof}[Proof of Proposition~\ref{takyta}] 
If $h\in\Ba^X_n$, then $i_*h=h\w[X]$ is a so-called Coleff-Herrera current with respect to $X$
(taking values in the holomorphic vector bundle
$\Lambda^N T^*_{1,0}(\Omega)$)
that is annihilated by $\J_X$,  cf., \cite{Aext}.
Thus we have mappings
\begin{equation}\label{utter}
\Ho^p(\Homs(\Ok(E_\bullet),\varOmega^N)) %%\Ho^p(\Ok(E_\bullet^*)\otimes_\Ok\varOmega^N)
\to\Ba^X_n\to\Homs(\Ok^\Omega/\J_X,\CH_X)\otimes_\Ok\varOmega^N,
\end{equation}
defined by $\xi d\zeta\mapsto \xi\cdot\omega_0$ and $h\mapsto i_*h$.
The latter  mapping is certainly injective. The composed mapping is an isomorphism according
to \cite[Theorem~1.5]{Aext}.  It follows that both mappings are isomorphisms.
From \cite[Theorem~1.5]{Aext} we also know that the composed mapping  is independent
of the particular Hermitian resolution, and choice of $d\zeta$, and thus induces an isomorphism
$\Ext^p(\Ok^\Omega/\J_X,\varOmega^N)\simeq \Homs(\Ok^\Omega/\J_X,\CH_X)\otimes_\Ok\varOmega^N$.
Hence the proposition follows.
%%so we get indeed balbala.
\end{proof}

%% Combining these two proposition we conclude that a meromorphic form
%%$\phi$ on $X$ is indeed holomorphic if and only if $\dbar (\phi h)=0$ for all $h

%%%%%%%%%%%%%%%%%%%%
We conclude with a lemma that roughly speaking says that one can ``divide'' by $\omega$.

\begin{lemma}\label{omegalemma}
%%Let $\omega_0$ be the almost semi-meromorphic form on $X$ such that $i_*\omega_0 = R_p\wedge d\zeta$.
If $\phi$ is a smooth $(n,q)$-form on $X$, then there is a smooth $(0,q)$-form $\phi'$ on $X$ 
with values in $(E^0)^*$ such that 
$\phi=\omega_0\wedge \phi'$.
\end{lemma}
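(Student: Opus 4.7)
The plan is to reduce the claim to a purely local statement, apply the generation property $(ii)'$ of $\Ba^X_n$ pointwise, and then patch by a smooth partition of unity.

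Fix $x\in X$ and choose a local embedding $i\colon X\hra\Omega\subset\C^N$ near $x$. Writing the smooth ambient extension of $\phi$ in coordinates gives
\begin{equation*}
\phi \;=\; i^*\!\Big(\sum_{|I|=n,\,|J|=q}\Phi_{IJ}\,d\zeta_I\w d\bar\zeta_J\Big) \;=\; \sum_{I,J} f_{IJ}\,\eta_I\w\bar\psi_J,
\end{equation*}
with $f_{IJ}:=i^*\Phi_{IJ}\in\E_{0,0}(X)$, $\eta_I:=i^*d\zeta_I$, and $\bar\psi_J:=i^*d\bar\zeta_J$ an antiholomorphic $(0,q)$-form on $X$. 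Each $\eta_I$ is the pullback of an ambient holomorphic $n$-form, hence lies in $\PM^X_{n,0}$ and is $\dbar$-closed, so $\eta_I\in\Ba^X_n$ by Example~\ref{ba}.

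The crux is now $(ii)'$: since $\Ba^X_n$ is locally generated over $\Ok^X$ by pairings $\xi\cdot\omega_0$ with $\xi\in\Ok((E^0)^*)$, and the map $\xi\mapsto\xi\cdot\omega_0$ is $\Ok^X$-linear, I can shrink to a neighborhood $U$ of $x$ and find holomorphic sections $\xi_I$ of $(E^0)^*$ on $U$ with $\eta_I=\xi_I\cdot\omega_0$. A routine computation in any local trivialization of $E^0$ gives the identity $(\xi_I\cdot\omega_0)\w\bar\psi_J=\omega_0\w(\xi_I\bar\psi_J)$, so the $(E^0)^*$-valued smooth $(0,q)$-form
\begin{equation*}
\phi'_U \;:=\; \sum_{I,J} f_{IJ}\,\xi_I\,\bar\psi_J
\end{equation*}
on $U$ satisfies $\omega_0\w\phi'_U=\phi$ on $U$.

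Finally I cover $X$ by neighborhoods $U_\alpha$ carrying such local solutions $\phi'_\alpha$, pick a subordinate smooth partition of unity $\{\rho_\alpha\}$, and set $\phi':=\sum_\alpha \rho_\alpha\phi'_\alpha$. Then $\phi'\in\E_{0,q}(X,(E^0)^*)$ and
\begin{equation*}
\omega_0\w\phi' \;=\; \sum_\alpha\rho_\alpha\,\omega_0\w\phi'_\alpha \;=\; \sum_\alpha\rho_\alpha\,\phi \;=\; \phi.
\end{equation*}
The only ingredient of any depth is property $(ii)'$, itself a direct consequence of Proposition~\ref{takyta}; once this is in hand, everything else is formal and I anticipate no serious obstacle.
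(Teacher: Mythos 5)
Your argument is correct, but it reaches the factorization by a genuinely different route than the paper. The paper's proof stays in the ambient space: it invokes \cite[Theorem~1.5 and Example~1]{Aext} to produce (locally, and then globally by a partition of unity) a holomorphic $E_p^*$-valued $(p,0)$-form $\tilde a$ with $R_p\w \tilde a=[X]$, peels off $d\zeta$ from $\tilde a\w\Phi$ by a bidegree count, and concludes by testing against ambient forms. You instead decompose $\phi$ locally into monomials $f_{IJ}\,i^*d\zeta_I\w i^*d\bar\zeta_J$, observe that each $i^*d\zeta_I$ is a $\dbar$-closed element of $\PM^X_{n,0}$ and hence a section of $\Ba^X_n$, and then divide by $\omega_0$ using the surjectivity of \eqref{greta}, i.e.\ Proposition~\ref{takyta} and property $(ii)'$, patching with a partition of unity at the end. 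The two proofs are cousins --- Proposition~\ref{takyta} is itself deduced from the same duality theorem of \cite{Aext} that furnishes the paper's $a$ --- but yours has the merit of being intrinsic on $X$ and of reusing a result the paper has already established, at the cost of the coordinate decomposition and a second layer of patching; the paper's version is more economical in that a single global $\tilde a$ and one bidegree observation do all the work at once. Two small points you should make explicit: the identity $(\xi_I\cdot\omega_0)\w\bar\psi_J=\omega_0\w(\xi_I\,\bar\psi_J)$ involves products with the almost semi-meromorphic current $\omega_0$ in the sense of Proposition~\ref{multprop}, so it should be verified on $X_{reg}$ and then extended by the SEP (both sides lie in $\W(X)$); and the local relations $\eta_I=\xi_I\cdot\omega_0$ coming from $(ii)'$ hold on neighborhoods that depend on the point, which is exactly what your final partition of unity is there to repair.
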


\begin{proof}
Let $\Phi$ be a smooth extension of $\phi$ to  $\Omega$.
Since $[X]$ is a Coleff-Herrera current (with values in $\Lambda^pT^*_{1,0}(\Omega)$), it follows from 
\cite[Theorem~1.5 and Example~1]{Aext} 
that locally there is a holomorphic $E_p^*$-valued $(p,0)$-form $a$ such that
$R_p\wedge a =[X]$. 

By a partition of unity we can find a global smooth $\tilde{a}$ such that 
$R_p\wedge \tilde{a} =[X]$ in $\Omega$. Since $\tilde{a}\wedge \Phi$ has bidegree $(N,q)$, there is an $E_p^*$-valued
smooth $(0,q)$-form $\Phi'$ in $\Omega$ such that $\tilde{a}\wedge \Phi=d\zeta\wedge \Phi'$. For every test form
$\Psi$ in $\Omega$ we now get
\begin{eqnarray*}
\int_X \phi\wedge i^*\Psi &=& [X]. (\Phi\wedge \Psi) =
R_p\wedge \tilde{a}. \,(\Phi\wedge \Psi) = R_p\wedge d\zeta. \,(\Phi'\wedge \Psi) \\
&=&
\int_X \omega_0\wedge \phi'\wedge i^*\Psi, 
\end{eqnarray*}   
where $\phi'=i^*\Phi'$.
Hence, $\phi=\omega_0\wedge \phi'$ on $X$.
\end{proof}

An algebraic counterpart of the factorization $R_p\w a=[X]$ appeared
in \cite{LJ}  in case $X$ is Cohen-Macaulay; then  one can take
$a=df_1 df_2\cdots df_p$. %%%, provided that $m=p$ in \eqref{karvupplosn}.

\section{The  strong $\debar$-operator on $X$}\label{starksektion}

Let $\omega$ be a structure form on  $X$,
and let $\chi_\delta:=\chi(|h|/\delta)$, where $\chi$ is a smooth approximand
of the characteristic  function of $[1,\infty)$, and  $h$ is a 
holomorphic tuple such that $X_{sing}=\{h=0\}$.
Notice that if $\alpha\in\W(X)$, then
\begin{equation}\label{pellepenna}
\1_{X_{sing}}\nabla_f\alpha=0 \iff \1_{X_{sing}}\dbar\alpha=0 \iff
\dbar\chi_\delta\w\alpha\to 0,\   \delta\to 0.
\end{equation}
In fact,  since $\1_{X_{sing}}\alpha=0$ and $f$ is smooth we have that 
$\1_{X_{sing}}f\alpha=0$; hence the first equivalence follows. For the second one, consider the equality
$$
\dbar(\chi_\delta\alpha)=\chi_\delta\dbar\alpha+\dbar\chi_\delta\w\alpha.
$$
Since $\chi_\delta\alpha\to\alpha$ it follows that 
$\1_{X_{sing}}\dbar\alpha=\lim(1-\chi_\delta)\dbar\alpha=0$ if and only if $\dbar\chi_\delta\w\alpha\to 0$.

\begin{lemma}\label{urdjur}
Assume that $\mu\in\W(X)$.

\begin{itemize}
\item[(i)] If there is $\tau\in\W(X)$ such that 
\begin{equation}\label{urdum}
-\nabla_f (\mu \wedge \omega) = \tau\wedge \omega,
\end{equation}
then $\dbar\mu=\tau$ and
\begin{equation}\label{urbota}
\dbar\chi_\delta\w\mu\w\omega\to 0,\ \delta\to 0.
\end{equation}

\item[(ii)] If $\dbar\mu\in\W(X_{reg})$ and \eqref{urbota} holds, then there is $\tau\in\W(X)$ such that
\eqref{urdum} holds.
\end{itemize}
\end{lemma}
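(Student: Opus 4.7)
The plan is to localize the hypothesis to $X_{reg}$ using the cut-off $\chi_\delta$. Since $\chi_\delta$ is a smooth scalar function and $\mbox{supp}\,\chi_\delta\subset X_{reg}$, where $\omega$ is smooth and $\nabla_f\omega=0$ (from $\nabla_f R=0$ together with $i_*\omega=R\w d\zeta$), and since $\nabla_f\mu=-\dbar\mu$, the Leibniz rule yields the basic identity
\begin{equation*}
-\nabla_f(\chi_\delta\mu\w\omega) \;=\; \chi_\delta\dbar\mu\w\omega \;+\; \dbar\chi_\delta\w\mu\w\omega.
\end{equation*}
Both parts of the lemma will be obtained by letting $\delta\to 0$ in this identity.

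For (i), the left-hand side tends to $-\nabla_f(\mu\w\omega)=\tau\w\omega$ since $\mu\w\omega\in\W(X)$ has the SEP. On $X_{reg}$ the hypothesis combined with Leibniz and $\nabla_f\omega=0$ gives $(\dbar\mu-\tau)\w\omega=0$, and Lemma~\ref{omegalemma} (``division by $\omega_0$'') forces $\dbar\mu=\tau$ there; consequently $\chi_\delta\dbar\mu\w\omega\to\tau\w\omega$. Subtracting gives $\dbar\chi_\delta\w\mu\w\omega\to 0$, i.e.\ \eqref{urbota}. To obtain $\dbar\mu=\tau$ on all of $X$, by \eqref{pellepenna} it suffices to check $\dbar\chi_\delta\w\mu\to 0$, which is done by testing: any smooth test form $\phi$ of the appropriate bidegree can be written $\phi=\omega_0\w\phi'$ by Lemma~\ref{omegalemma}, and then
\begin{equation*}
(\dbar\chi_\delta\w\mu).\phi \;=\; (\dbar\chi_\delta\w\mu\w\omega_0).\phi' \;\to\; 0
\end{equation*}
by the $E^0$-component of \eqref{urbota}.

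For (ii), set $\tau:=\1_{X_{reg}}\dbar\mu$, the standard extension of $\dbar\mu|_{X_{reg}}$, which is pseudomeromorphic. One first checks $\tau\in\W(X)$: for any subvariety $V\subset X$ of positive codimension, writing $\1_{X_{reg}}=1-\1_{X_{sing}}$ and invoking \eqref{rakneregel},
\begin{equation*}
\1_V\tau \;=\; \1_V\dbar\mu - \1_{V\cap X_{sing}}\dbar\mu;
\end{equation*}
restricted to $X_{reg}$ both summands vanish by the SEP of $\dbar\mu\in\W(X_{reg})$, and a parallel computation using $\1_{V\cap X_{sing}}\1_{X_{sing}}=\1_{V\cap X_{sing}}$ shows the residual singular part is also zero, so $\1_V\tau=0$. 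Passing $\delta\to 0$ in the basic identity now yields \eqref{urdum}: the left-hand side tends to $-\nabla_f(\mu\w\omega)$, the term $\chi_\delta\dbar\mu\w\omega$ tends to $\tau\w\omega$ since $\chi_\delta\dbar\mu\to\tau$ and wedge product with the almost semi-meromorphic $\omega$ is continuous along such standard-extension limits (Proposition~\ref{multprop}), and the last term vanishes in the limit by hypothesis.

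The hardest step will be the ``division by $\omega_0$'' in (i), i.e.\ deducing $\dbar\chi_\delta\w\mu\to 0$ from $\dbar\chi_\delta\w\mu\w\omega\to 0$; this is not a formal consequence, but is made possible by Lemma~\ref{omegalemma}, which supplies enough smooth test forms of the factored shape $\omega_0\w\phi'$. The bookkeeping of $\1_V$-operators needed for the SEP verification of $\tau$ in (ii) is delicate but routine once the computation rule \eqref{rakneregel} is applied systematically.
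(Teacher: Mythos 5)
Your proposal is correct and follows essentially the same route as the paper: the Leibniz identity you write out for $\chi_\delta$ is exactly the content of \eqref{pellepenna}, the passage from $(\dbar\mu-\tau)\w\omega_0=0$ and from $\dbar\chi_\delta\w\mu\w\omega_0\to 0$ to the corresponding statements without $\omega_0$ is done via Lemma~\ref{omegalemma} just as in the paper, and in (ii) you take the same $\tau=\1_{X_{reg}}\dbar\mu$ and conclude by matching SEP currents on $X_{reg}$. The only differences are cosmetic (a slight reordering of the steps in (i), and a more explicit $\1_V$-bookkeeping for the SEP of $\tau$), so nothing further is needed.
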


From Proposition \ref{multprop} we know that $\mu\wedge \omega$ is a well-defined current in $\W(X)$.

\begin{proof}
Assume that \eqref{urdum} holds. Then  $-\1_{X_{sing}}\nabla_f (\mu \wedge \omega)=
\1_{X_{sing}} \tau\wedge \omega=0$, since $\tau\w\omega$ is in $\W(X)$. Thus \eqref{urbota}
holds, in view of \eqref{pellepenna}. Since $\omega$ is smooth on $X_{reg}$
 and $\nabla_f\omega=0$, \eqref{urdum} implies that $\dbar\mu\w\omega_0=\tau\w\omega_0$
on $X_{reg}$. It follows from  Lemma~\ref{omegalemma} that
$\dbar\mu=\tau$ on $X_{reg}$.  Moreover, from \eqref{urbota} and  Lemma~\ref{omegalemma}
we find that $\dbar\chi_\delta\w\mu\to 0$, so that, cf., \eqref{pellepenna},
$\1_{X_{sing}}\dbar \mu=0$. It follows that $\dbar\mu=\1_{Xreg}\dbar\mu=\tau$. Thus
(i) is proved.

\smallskip

If  $\dbar\mu$ has the SEP on $X_{reg}$, then $\tau:=\1_{X_{reg}}\dbar\mu$ has the SEP on $X$
and hence is in $\W(X)$.  
%%As in the proof of (i),  \eqref{urbota} implies that
%%$\1_{X_sing}\dbar\mu=0$. Thus $\tau=\dbar\mu$.  
Since $\omega$ is smooth on $X_{reg}$,
$-\nabla_f(\mu\w\omega)=\tau\w\omega$ there. In view of \eqref{urbota} and \eqref{pellepenna},
$\1_{X_{sing}}\nabla_f(\mu\w\omega)=0$,  and since $\nabla_f(\mu\w\omega)$ has the SEP on $X_{reg}$
it follows that it has the SEP on $X$, i.e., is in $\W(X)$.
Since $\omega$ is smooth on $X_{reg}$, \eqref{urdum} holds on $X_{reg}$. Since both sides have the
SEP, the equality must hold on $X$.
\end{proof}

%%Since $\omega$ is smooth on $X_{reg}$ and  $\nabla_f$-closed,
%%it follows that $\dbar\mu\w\omega=\tau\w\omega$ 
%%and in particular  $\dbar\mu\w\omega_0=\tau\w\omega_0$ 
%%there. From 
%% Lemma~\ref{omegalemma} (or Proposition~\ref{takyta}) we conclude that
%%$\dbar\mu=\tau$ on  $X_{reg}$. 
%%Now take
%%$\chi_\delta:=\chi(|h|/\delta)$ where  $h$ is a tuple whose common zero set is equal to $X_{sing}$,
%%and $\chi$ is a smooth approximand of the characteristic function for the interval $[1,\infty)$.
%%Since $\chi$ vanishes in a \nbh of $X_{sing}$ we have 
%%$$
%%-\nabla_f(\chi_\delta\mu\w\omega)=\chi_\delta\tau\w\omega+\dbar\chi_\delta\w\mu\omega.
%%$$
%%By the SEP, $\chi_\delta\mu\w\omega\to \mu\w\omega$ and $\chi_\delta\tau\omega\to\tau\w\omega$
%%when $\delta\to 0$.
%%We conclude that $\dbar\chi_\delta\w\mu\omega\to 0$. It  follows from
%%Lemma~\ref{omegalemma} that $\dbar\chi_\delta\w\mu\to 0$.  From the equality
%%$\dbar(\chi_\delta\mu)=\chi_\delta\tau+\dbar\chi_\delta\w\mu$ we conclude that $\dbar\mu=\tau$ on $X$.
%%If $\mu\in\W_0(X)$ and $\tau=0$, then $\mu$ is holomorphic on $X_{reg}$ with a meromorphic extension
%%to  $X$, cf., Example~\ref{ba}, and by the SEP,  $\mu$ must coincide with this meromorphic function.
%%From Proposition~\ref{golvyta} we conclude that  $\mu\in\Ok^X$. 
%%\end{proof}

Let $x$ be a point in an arbitrary complex space $X$. By choosing local embeddings 
$X\hookrightarrow \Omega \subset \C^N$
at $x$ and Hermitian free resolutions of $\hol^{\Omega}/\J_X$ (and choice of
coordinates on $\Omega$, cf., \eqref{DLrep}) 
we get the collection $\mathfrak{S}_x$ of all  structure forms $\omega$ at $x$.

%%Notice that $\omega$ also depends on the choice of 
%%holomorphic coordinates in the ambient space since $i_*\omega=R\wedge d\zeta$,
%%but only up to an invertible holomorphic function.

Given $\mu,\tau\in \W^X_{0,*,x}$ we say that $\debar_X \mu = \tau$ at $x$ if
\eqref{urdum} holds at $x$ 
for all
$\omega\in \mathfrak{S}_x$. 
It follows from  Lemma~\ref{urdjur} that $\dbar_X\mu=\tau$ if and only if $\dbar\mu=\tau$
and the ``boundary condition'' \eqref{urbota} holds for every $\omega\in \mathfrak{S}_x$.

\begin{definition}[The sheaves $\mbox{Dom}_q\, \debar_X$]\label{domdef}
We say that a $(0,q)$-current $\mu$ is a section of $\mbox{Dom}_q\,\debar_X$ in  the open set $\mathcal{U}\subset X$
if $\mu\in \W_{0,q}(\mathcal{U})$ and there is $\tau\in\W_{0,q+1}(\mathcal U)$ such  that $\dbar_X\mu=\tau$
 in $\mathcal U$, i.e., $\dbar_X\mu=\tau$ at each point $x\in\mathcal U$.
\end{definition}

If $\mu\in\W(X)$ is smooth on $X_{reg}$, then it follows from Lemma~\ref{urdjur} that
$\mu\in\mbox{Dom}_q\, \debar_X$ if and only if \eqref{urbota} holds at each $x\in X$ for each
 $\omega\in \mathfrak{S}_x$.  If $\mu$ is smooth on $X$, then 
$\dbar(\mu\w\omega)$ has the SEP, and so  \eqref{urbota} holds for each $\omega$. Thus 
$\E_{0,q}^X$ is a subsheaf of $\mbox{Dom}_q\, \debar_X$.

\begin{proposition}\label{Bprop}
The sheaves $\mbox{Dom}_*\,\debar_X$ are $\E_{0,*}^X$-modules and 
\begin{equation}\label{Bkomplex}
0 \to \hol^X \hookrightarrow \mbox{Dom}_0\,\debar_X \stackrel{\debar}{\longrightarrow} \mbox{Dom}_1\,\debar_X 
\stackrel{\debar}{\longrightarrow} \cdots
\end{equation}
is a complex. Moreover, the kernel of $\debar$ in $\mbox{Dom}_0\,\debar_X$ is $\hol^X$.
\end{proposition}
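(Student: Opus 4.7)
The plan is to verify the three claims—the $\E^X_{0,*}$-module structure, the complex property of \eqref{Bkomplex}, and the identification of the kernel in degree zero—by direct manipulation of the defining identity $-\nabla_f(\mu\w\omega)=\tau\w\omega$ from Lemma~\ref{urdjur}, the nilpotency $\nabla_f^2=0$, and characterization $(i)'$ of strongly holomorphic functions.

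For the module property, I would fix $\phi\in\E^X_{0,k}$ and $\mu\in\mbox{Dom}_q\,\dbar_X$ with $\dbar_X\mu=\tau$, and verify the defining identity for $\phi\w\mu$. The current $\phi\w\mu\w\omega$ is in $\W^X$ by Proposition~\ref{multprop} together with closure of $\W^X$ under multiplication by smooth forms, and since $f$ acts $\Ok$-linearly on the $E$-fibres, the super-Leibniz rule applied to $\nabla_f(\phi\w(\mu\w\omega))$ yields
$$
-\nabla_f(\phi\w\mu\w\omega)=\bigl(\dbar\phi\w\mu+(-1)^k\phi\w\tau\bigr)\w\omega,
$$
whose right-hand side has the form $\tau'\w\omega$ with $\tau'\in\W^X$. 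Because this computation is valid for every $\omega\in\mathfrak{S}_x$, it yields $\phi\w\mu\in\mbox{Dom}_{q+k}\,\dbar_X$ together with the expected Leibniz rule for $\dbar_X$. The complex property then falls out by applying $\nabla_f$ once more to the defining equation for $\mu$: since $\nabla_f^2=0$ (from $f^2=0$, $\dbar^2=0$, and the graded anticommutativity of $f$ and $\dbar$ built into the superstructure on $E$), one gets $-\nabla_f(\tau\w\omega)=0\cdot\omega$ for every $\omega\in\mathfrak{S}_x$, so $\tau\in\mbox{Dom}_{q+1}\,\dbar_X$ with $\dbar_X\tau=0$.

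For the kernel identification in degree zero, suppose $\mu\in\mbox{Dom}_0\,\dbar_X$ with $\dbar_X\mu=0$. Lemma~\ref{urdjur}(i) (applied with $\tau=0$) already gives $\dbar\mu=0$ on all of $X$, so $\mu|_{X_{reg}}$ is a $\dbar$-closed $(0,0)$-current on a complex manifold, hence a smooth holomorphic function by elliptic regularity. To invoke $(i)'$ I would next promote $\mu$ to a meromorphic function on $X$: locally, choose a holomorphic $h$ on $X$ vanishing on $X_{sing}$ but on no irreducible component of $X$ (available since $X_{sing}$ has positive codimension), so that $W:=\{h=0\}$ is a hypersurface of $X$ containing $X_{sing}$. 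Then $\mu|_{X\setminus W}$ is holomorphic and $\mu\in\PM_{0,0}(X)$, so Example~\ref{ba} furnishes a meromorphic extension which, by uniqueness of SEP extensions, must agree with $\mu$. Finally, the hypothesis reads $\nabla_f(\mu\w\omega)=0$, so $(i)'$ concludes $\mu\in\hol^X$. The principal obstacle in the proof is precisely this last reduction—upgrading a $\dbar$-closed element of $\W^X_{0,0}$ to a bona fide meromorphic function so that $(i)'$ can be applied—which is exactly what Example~\ref{ba} is tailored to provide.
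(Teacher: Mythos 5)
Your proof is correct and follows essentially the same route as the paper: the kernel identification via Lemma~\ref{urdjur}, Example~\ref{ba} and $(i)'$ is the paper's argument almost verbatim, and the module and complex properties rest on the same ingredients ($\nabla_f\omega=0$ on $X_{reg}$ and closure of $\W^X$ under multiplication by smooth forms). The only difference is cosmetic: you verify the defining identity \eqref{urdum} directly, via the super-Leibniz rule and $\nabla_f^2=0$, whereas the paper checks the equivalent regularized boundary condition \eqref{urbota} and invokes Lemma~\ref{urdjur}(ii); your version is, if anything, slightly more streamlined.
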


When $\dim X=1$ the complex  \eqref{Bkomplex} is exact, i.e., a fine resolution of $\hol^X$,
see Section \ref{kurvsektion} below. We do not know
whether this is true if $\dim X >1$.

\begin{proof}
Assume that $\mu$ is in $\text{Dom}\,\debar_X$ and that $\omega \in \mathfrak{S}_x$.
In view of Lemma~\ref{urdjur},
$\mu$ and $\debar \mu$ are both in $\W^X$ and \eqref{urbota} holds.
Since  $\omega$ is smooth on $X_{reg}$ and $\nabla_f\omega=0$, 
$\nabla_f(\dbar\chi_\delta\w\mu\w\omega)=-\dbar\chi_\delta\w\dbar\mu\w\omega$.
Therefore \eqref{urbota}, with $\mu$ replaced by $\dbar\mu$, holds as well
and it follows from Lemma~\ref{urdjur} that $\debar\mu \in \text{Dom}\, \debar_X$. 
Moreover, if $\xi$ is smooth it is 
clear that \eqref{urbota}
holds with $\mu$ replaced by $\xi\w\mu$. Since $\dbar(\dbar\mu)=0$ and 
$\dbar(\xi\w\mu)$ is in $\W^X$ we conclude that $\xi\wedge \mu \in \text{Dom}\, \debar_X$.

Now assume $\mu\in\W^X_{0,0}$ and 
\eqref{urdum} holds with $\tau=0$. Then $\dbar\mu=0$ by Lemma~\ref{urdjur} and
hence $\mu$ is holomorphic
on $X_{reg}$, and has a meromorphic extension to $X$, cf., Example~\ref{ba}.  Thus
$\mu\in\Ok^X$ in view of $(i)'$ above.%%Proposition~\ref{golvyta}.
\end{proof}

If \eqref{urdum} holds at $x$ for a given $\omega\in\mathfrak{S}_x$, 
then in particular, $\dbar(\mu\w\omega_0)\pm\mu f_{p+1}\omega_1=
\tau\w\omega_0$. Applying various $\xi\in\Ok(E_p^*)$ with $f^*_{p+1}\xi=0$ 
to this equality we conclude, 
by  Proposition~\ref{takyta},  that
\begin{equation}\label{hvil}
\dbar(\mu\w \theta)=\tau\w \theta,\quad  \theta\in\Ba^X_x.
\end{equation}

If $X$ is Cohen-Macaulay, and \eqref{urdum} holds for one $\omega$, then it holds
for all $\omega\in \mathfrak{S}_x$. In fact we have:

\begin{proposition} If $X$ is Cohen-Macaulay, then $\mu\in\W(X)$ is in 
$\mbox{Dom}\,\debar_X$ and $\dbar_X \mu=\tau$   if and only if
%%there is $\tau\in\W(X)$ such that 
(locally)  \eqref{hvil} holds.
\end{proposition}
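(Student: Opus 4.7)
Plan. I would split the proof into the two implications. The forward direction is actually already handled by the discussion immediately preceding the proposition, with no Cohen-Macaulay hypothesis needed: assuming \eqref{urdum} for a given $\omega\in\mathfrak{S}_x$, one extracts the $E^0$-component to obtain $\dbar(\mu\w\omega_0)\pm\mu f_{p+1}\omega_1 = \tau\w\omega_0$, and then pairs with holomorphic sections $\xi$ of $E_p^*$ annihilated by $f^*_{p+1}$. By Proposition~\ref{takyta} the classes $\xi\cdot\omega_0$ generate $\Ba^X_{n,x}$, so \eqref{hvil} follows for every $\theta\in\Ba^X_{n,x}$.

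For the converse I would exploit the fact that Cohen-Macaulayness forces $\omega$ to collapse to its $\omega_0$-piece. Concretely, when $X$ is Cohen-Macaulay the intrinsic sets $X_k$ attached to $\F=\Ok^\Omega/\J_X$ are empty for $k\ge p+1$, and this holds regardless of which Hermitian free resolution is used to build $R$ (the $X_k$ being invariants of $\F$). Since $R_k$ has bidegree $(0,k)$ and support in $X_k$, the dimension principle (Proposition~\ref{grad/stod}(ii)) forces $R_k=0$ for $k>p$; the identity $i_*\omega_k = R_{p+k}\w d\zeta$ from Proposition~\ref{fundprop} then gives $\omega_k=0$ for $k\ge 1$. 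Thus every $\omega\in\mathfrak{S}_x$ is of the form $\omega=\omega_0$, where $\omega_0$ is an $E^0$-valued section of $\Ba^X_n$ satisfying $f^0\omega_0=0$.

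Given this reduction, the converse is short. Trivializing $E^0$ locally writes $\omega_0$ as a tuple of scalar currents, each lying in $\Ba^X_{n,x}$ by Proposition~\ref{fundprop}, so \eqref{hvil} applied componentwise yields $\dbar(\mu\w\omega_0)=\tau\w\omega_0$. Combined with $f^0\omega_0=0$ and $\omega_1=0$, the formula $-\nabla_f(\mu\w\omega)|_{E^0}=\dbar(\mu\w\omega_0)\pm\mu f_{p+1}\omega_1$ from the discussion preceding the proposition becomes $-\nabla_f(\mu\w\omega) = \dbar(\mu\w\omega_0) = \tau\w\omega_0 = \tau\w\omega$, which is precisely \eqref{urdum} for the given $\omega$. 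Since $\omega\in\mathfrak{S}_x$ was arbitrary, Definition~\ref{domdef} gives $\mu\in\mbox{Dom}\,\debar_X$ with $\dbar_X\mu=\tau$. The only step that requires any care is the reduction $\omega=\omega_0$ for non-minimal resolutions, but this is a clean consequence of the invariance of the $X_k$ noted just after \eqref{BEsats}.
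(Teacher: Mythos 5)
Your forward implication is fine and is exactly the paper's own argument (given in the discussion immediately preceding the proposition): extract the $E^0$-component of \eqref{urdum}, apply the sections $\xi\in\Ok(E_p^*)$ with $f_{p+1}^*\xi=0$, and invoke Proposition~\ref{takyta}; no Cohen--Macaulay hypothesis is needed there.

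The converse, however, has a genuine gap. Your entire reduction rests on the claim that every $\omega\in\mathfrak{S}_x$ collapses to its $\omega_0$-piece, which you justify by asserting that $R_k$ has support in $X_k$. The paper never establishes this, and it is false for non-minimal resolutions: when $X_{p+1}=\emptyset$ the morphism $\sigma_{p+1}$ is smooth on all of $\Omega$, and then $R_{p+1}=\pm\,\dbar\sigma_{p+1}\w R_p$ everywhere, which is generically nonzero (pad the Koszul resolution of a positive-dimensional complete intersection with a split-exact summand written in a twisted holomorphic frame, so that $\dbar\sigma_{p+1}\neq 0$ along $X$); correspondingly $\omega_1=\alpha^1\omega_0\neq 0$. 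This is precisely why the paper is careful to write ``if $X$ is Cohen--Macaulay \emph{and} $m=p$, then $R=R_p$'' and ``then $\omega_0$ is an $E^0$-valued section of $\Ba^X_n$'': for $m>p$ one only has $\dbar\omega_0=f^1\omega_1$, so the components of $\omega_0$ in a holomorphic frame need not lie in $\Ba^X_n$ either --- only $\vartheta=g\omega_0$ does, and $\omega_0=\sigma_F\vartheta$ with $\sigma_F$ smooth but not holomorphic. Since $\mathfrak{S}_x$ ranges over \emph{all} Hermitian free resolutions, your argument verifies \eqref{urdum} only for structure forms coming from resolutions of length $p$. What Cohen--Macaulayness actually buys, and what the paper uses, is $X^1=\emptyset$, so that $\sigma_F$ and all the $\alpha^k$ of Proposition~\ref{fundprop} are smooth on all of $X$; hence every $\omega\in\mathfrak{S}_x$ has the form $a\vartheta$ with $a$ smooth and $\vartheta$ an $F$-valued section of $\Ba^X_n$. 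Applying \eqref{hvil} to the components of $\vartheta$ and Leibniz' rule gives $\dbar(\mu\w\omega)=\pm\,\dbar a\w\mu\w\vartheta\mp a\,\tau\w\vartheta$, which lies in $\W(X)$; thus $\1_{X_{sing}}\dbar(\mu\w\omega)=0$, i.e.\ \eqref{urbota} holds, and since $\dbar\mu=\tau$ on $X_{reg}$ (apply \eqref{hvil} to a local nonvanishing holomorphic $n$-form, or divide by $\omega_0$ via Lemma~\ref{omegalemma}), Lemma~\ref{urdjur}(ii) yields \eqref{urdum} for the given $\omega$.
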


\begin{proof}
It follows from Proposition~\ref{fundprop} that if  $X$ is Cohen-Macaulay at $x\in X$,
and thus $X^1=\emptyset$,  any 
$\omega\in\mathfrak{S}_x$ has the form 
$a\vartheta$ where $\vartheta$ is (a vector-valued) section of $\Ba^X$ and $a$ is smooth.
If $\tau\in\W(X)$ and \eqref{hvil} holds, then
$$
\dbar(\mu\w\omega)=\pm \dbar(a\mu\w \vartheta)=\pm \dbar a\w\mu\w \vartheta 
\mp a\dbar(\mu\w \vartheta)=
\pm \dbar a\w\mu\w \vartheta \mp a \tau\w \vartheta.
$$
It  follows that $\1_{X_{sing}}\dbar(\mu\w\omega)=0$ and hence $\mu$ is in 
$\mbox{Dom}\,\debar_X$. 
\end{proof}

Notice that $\W^X_{0,n} = \mbox{Dom}_n\, \debar_X$. Assume now that 
\begin{equation}\label{Serre}
\mbox{codim}\, X^r \geq r+\ell, \quad r\geq 0.
\end{equation}
We claim that if $q\leq \ell-2$, 
 $\mu\in\W^X_{0,q}$ and $\dbar\mu\in\W^X_{0,q+1}$, 
then $\mu\in \mbox{Dom}_q\,\debar_X$.
To see this,  we have to verify that $\1_{X_{sing}}\dbar(\mu\w\omega_k)=0$
for each $k\ge 0$. For $k=0$ it follows directly by the dimension principle
since $\1_{X_{sing}}\dbar(\mu\w\omega_0)$ has bidegree (at most) $(n,\ell-1)$
and support on $X^0$ that has codimension $\ell$. Now, $\omega_1=\alpha^1\omega_0$
and $\alpha^1$ is smooth outside $X^1$, so $\1_{X_{sing}}\dbar(\mu\w\omega_1)=
\pm\alpha^1\1_{X_{sing}}\dbar(\mu\w\omega_0)=0$ outside $X^1$. Thus
$\1_{X_{sing}}\dbar(\mu\w\omega_1)$ has support on $X^1$ and hence must  vanish by 
\eqref{Serre} and the dimension principle. The claim follows in this way by induction.
It follows in particular that $X$ is normal if \eqref{Serre} holds for $\ell=2$.
One can verify that \eqref{Serre} with  $\ell=2$  is a way to formulate 
Serre's conditions  $R1$ and $S2$ for normality.

\section{Koppelman formulas on $X\subset\Omega$ (the embedded context)}\label{KoppelsektionI}

We first recall the construction of integral formulas in \cite{Aint} on an open set $\Omega\subset \C^N$.
Let $\eta=(\eta_1,\ldots,\eta_N)$ be a holomorphic tuple in $\Omega_{\zeta}\times \Omega_z$ that generates the ideal 
associated with the diagonal $\Delta \subset \Omega_{\zeta}\times \Omega_z$. For instance one can take
$\eta=\zeta-z$. 
Following the last section in \cite{Aint} we 
consider forms in $\Omega_{\zeta}\times \Omega_z$ with values
in the exterior algebra $\Lambda_{\eta}$ spanned by $T^*_{0,1}(\Omega\times \Omega)$ and
the $(1,0)$-forms $d\eta_1,\ldots,d\eta_n$.
On such forms interior multiplication $\delta_\eta$ with
\begin{equation*}
\eta=2\pi i \sum_1^N\eta_j\frac{\partial}{\partial \eta_j}
\end{equation*}
is defined.  Let $\nabla_{\eta}=\delta_{\eta}-\debar$.
A smooth section  $g=g_{0}+\cdots +g_{N}$ of $\Lambda_{\eta}$, defined for $z\in \Omega' \Subset \Omega$
and $\zeta \in \Omega$, 
such that $\nabla_{\eta}g=0$ and $g_{0}|_{\Delta}=1$,  lower indices denote degree in $d\eta$,
will be called a {\em weight with respect to $z\in\Omega'$}.
Notice that if $g$ and $g'$ are weights, then $g\wedge g'$ is again a weight. We will use one weight that has 
compact support in $\Omega$ %%(with respect to $z\in \Omega'\subset\subset\Omega$) 
and one weight which gives a division-interpolation type formula (for $z\in\Omega'$)
for the ideal sheaf $\J_X$ associated with a subvariety $X\hookrightarrow \Omega$.
We first discuss weights with compact support.

\begin{example}[Weights with compact support]\label{vikt}
If $\Omega$ is pseudoconvex and $K$ is a holomorphically convex compact subset, then one can find a weight with 
respect to $z$ in some neighborhood $\Omega'\Subset\Omega$ of $K$, 
depending holomorphically on $z\in \Omega'$, that has compact support 
in  $\Omega$, see, e.g., Example 2 in \cite{Aint2}.
Here is an explicit choice when $\Omega$ is a neighborhood of the closed unit ball $\overline{\mathbb{B}}$,
$K=\overline{\B}$, and $\eta=\zeta-z$: 
Let $\sigma = \bar{\zeta}\cdot d\eta/(2\pi i(|\zeta|^2-\bar{\zeta}\cdot z))$. Then $\delta_{\eta}\sigma =1$
for $\zeta\neq z$ and 
\begin{equation*}
\sigma \wedge (\debar \sigma)^{k-1}=
\frac{1}{(2\pi i)^k}\frac{(\bar{\zeta}\cdot d\eta)\wedge 
(d\bar{\zeta}\cdot d\eta)^{k-1}}{(|\zeta|^2-\bar{\zeta}\cdot z)^k}.
\end{equation*}
If  $\chi=\chi(\zeta)$ is a cutoff function that is $1$ in a slightly larger ball  $\Omega'$, 
then %%%we can take (as a weight with respect to $z\in\Omega'$ and with compact support in $\Omega$)
\begin{equation*}
g=\chi-\debar \chi \wedge \frac{\sigma}{\nabla_{\eta}\sigma}=
\chi-\debar \chi\wedge \Big(\sum_{k=1}^N\sigma \wedge (\debar \sigma)^{k-1}\Big).
\end{equation*}
is a weight with respect to $z\in\Omega'$  with compact support in $\Omega$.
\hfill $\Box$
\end{example}

Let $s$ be a smooth $(1,0)$-form in $\Lambda_{\eta}$ such that $|s|\lesssim |\eta|$ and 
$|\eta|^2\lesssim |\delta_{\eta} s|$; such an $s$ is called admissible. Then 
$B:=s/\nabla_{\eta}s=\sum_k s\wedge (\debar s)^{k-1}$ satisfies $\nabla_{\eta} B=1-[\Delta]$, where
$[\Delta]$ is the $(N,N)$-current of integration over $\Delta$. %%\subset \Omega \times \Omega$.
If $\eta=\zeta-z$,  then $s=\partial |\eta|^2$ will do and we refer to the resulting $B$ as the Bochner-Martinelli
form. If $g$ is any weight, we have $\nabla_{\eta} (g\wedge B) = g - [\Delta]$, and identifying terms of 
bidegree $(N,N-1)$ we see that
\begin{equation}\label{Koppel}
\debar (g\wedge B)_N=[\Delta]-g_N,
\end{equation}
which is equivalent to a weighted Koppelman formula in $\Omega$.

\begin{center}
---
\end{center}

\noindent We now turn our attention to construction of weights
for division-interpolation with respect to the ideal $\J_X$.
For the rest of this section we assume that $\Omega\subset \C^N$ is pseudoconvex and that 
$X\hookrightarrow \Omega$ is a subvariety. 
Let us fix global holomorphic frames for the bundles $E_k$ in
\eqref{VBkomplex} over $\Omega$. Then $E_k\simeq \C^{\mbox{rank}\,E_k}\times \Omega$, and the morphisms 
$f_k$ are just matrices of holomorphic functions. One can find, see \cite{Aint2} for explicit choices,
$(k-\ell,0)$-form-valued Hefer morphisms, i.e., matrices $H_k^{\ell}\colon E_k\to E_{\ell}$,
depending holomorphically on $z$ and $\zeta$,  such that $H_k^k =I_{E_k}$ and 
\begin{equation*}
\delta_{\eta}H_k^{\ell}=H_{k-1}^{\ell}f_k-f_{\ell+1}(z) H_k^{\ell+1}, \quad k>\ell,
\end{equation*}
where $I_{E_k}$ is the identity operator on $E_k$ and $f$ stands for $f(\zeta)$. 
For $\real\, \lambda\gg 0$ we put $U^{\lambda}=|F|^{2\lambda}u$, see Section~\ref{Rsektion} for the notation, and 
\begin{equation*}
R^{\lambda}=\sum_{k=0}^N R_k^{\lambda}= 1-\nabla_f U^{\lambda}=
1-|F|^{2\lambda} + \debar |F|^{2\lambda}\wedge u.
\end{equation*}
Then 
\begin{eqnarray*}
g^{\lambda}&:=& 1-\nabla_{\eta}\sum_{k=1}^N H_k^0 U_k^{\lambda}=
\sum_{k=0}^N H_k^0 R_k^{\lambda} + f_1(z)\sum_{k=1}^N H_k^1 U_k^{\lambda} \\
&=&
HR^{\lambda} + f_1(z) HU^{\lambda}
\end{eqnarray*}
is a weight that is as smooth as we want if $\real\, \lambda$ is large enough.
Let $g$ be any smooth weight with respect to $\Omega'\Subset \Omega$ (but not necessarily holomorphic
in $z$) with compact support in $\Omega_{\zeta}$. Then  \eqref{Koppel} holds with $g$ replaced by
$g^{\lambda}\wedge g$. Since $R(z)$ is $\nabla_{f(z)}$-closed we thus get
\begin{eqnarray*}
-\nabla_{f(z)} \big(R(z)\wedge dz\wedge (g^{\lambda}\wedge g\wedge B)_N\big) &=&
R(z)\wedge dz\wedge [\Delta] -\\
&-& R(z)\wedge dz\wedge (g^{\lambda}\wedge g)_N.
\end{eqnarray*}
Notice that the products of  currents are well-defined; they are just tensor products 
since $z$ and $\eta$ are independent variables in $\Omega\times \Omega$. 
Moreover, since $R(z) f_1(z)=0$ we have
\begin{eqnarray}\label{ASKoppel}
-\nabla_{f(z)} \big(R(z)\wedge dz\wedge (HR^{\lambda}\wedge g\wedge B)_N\big) &=&
R(z)\wedge dz\wedge [\Delta] - \nonumber \\
&-& R(z)\wedge dz\wedge (HR^{\lambda}\wedge g)_N.
\end{eqnarray}
It follows from \eqref{DLrep} that (recall that $\Delta \subset \Omega\times \Omega$ is the diagonal) 
\begin{equation}\label{ahh}
R(z)\wedge dz\w [\Delta]= \iota_* \omega,
\end{equation}
where $\iota \colon \Delta^X \hookrightarrow \Omega\times \Omega$ is the inclusion of the 
diagonal $\Delta^X \subset X\times X \subset \Omega\times \Omega$. We notice that 
the analytic continuation to $\lambda=0$ of the last term on the right hand side of \eqref{ASKoppel} exists
and yields the well-defined current $R(z)\w dz\wedge (HR\wedge g)_N$ in $\Omega_{\zeta}\times \Omega'_z$.
The existence of the analytic continuation to $\lambda=0$ of the left hand side of \eqref{ASKoppel} 
follows from Proposition~2.1 in \cite{AW2} since $R(z)\wedge B$ is pseudomoromorphic in $\Omega\times \Omega$.
Our Koppelman formulas will follow by letting $\lambda=0$ in \eqref{ASKoppel}.

%% this will give \eqref{prelim} and Lemma~\ref{ko}.

\smallskip

%Let $S=S(\zeta,z)$ be a smooth (or sufficiently differentiable) form with values in $\Lambda_{\eta}$. 
To begin with, let us consider \eqref{ASKoppel} for $\lambda=0$ in
$(\Omega\setminus X_{sing})\times (\Omega'\setminus X_{sing})$.
In this set we have, by \eqref{DLrep} and \eqref{staty}, that 
\begin{equation}\label{huga}
R(z)\wedge dz \wedge (HR\wedge g)_N =
\pm \omega(z)\wedge [X_z]\wedge \big( H(\gamma(\zeta) \lrcorner [X_{\zeta}])\wedge g \big)_N \quad \quad
\end{equation}
\begin{equation*}
\quad \quad = \pm \omega(z)\wedge [X_z]\wedge [X_{\zeta}] \wedge \gamma(\zeta)\lrcorner (H\wedge g)_N 
= \omega(z)\wedge [X_z \times X_{\zeta}] \wedge p(\zeta,z),
\end{equation*} 
where 
\begin{equation}\label{p}
p(\zeta,z):=\pm(\gamma(\zeta)\lrcorner (H\wedge g)_N)_{(n)}
\end{equation} 
is the term of 
$\pm\gamma(\zeta)\lrcorner (H\wedge g)_N$ of degree $n$ in $d\zeta$; this is the only term of 
$\pm\gamma(\zeta)\lrcorner (H\wedge g)_N$ that can contribute in \eqref{huga}
since $\omega(z)\wedge [X_z]$ has full degree in the $dz_j$.
Notice that $p(\zeta,z)$ is almost semi-meromorphic on $X\times X'$ 
($X'=X\cap\Omega'$)  and smooth on $X_{reg}\times \Omega'_z$;
if $g$ is holomorphic in $z$ then $z\mapsto p(\zeta,z)$ is holomorphic in $\Omega'$.

\begin{lemma}\label{penna}
In $(\Omega_{\zeta}\setminus X_{sing})\times (\Omega'_z\setminus X_{sing})$ we have
\begin{equation}\label{tu}
R(z)\wedge dz \wedge (HR^{\lambda}\wedge g\wedge B)_N|_{\lambda=0}=
R(z)\wedge dz \wedge (HR\wedge g\wedge |\eta|^{2\lambda}B)_N|_{\lambda=0}.
\end{equation}
\end{lemma}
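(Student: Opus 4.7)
The plan is to recognize both sides of \eqref{tu} as iterated limits of a single jointly holomorphic two-parameter family of currents. On the open set $(\Omega\setminus X_{sing})_\zeta\times(\Omega'\setminus X_{sing})_z$ I would introduce
\[
T(\lambda,\mu):=R(z)\wedge dz\wedge\big(HR^{\lambda}\wedge g\wedge|\eta|^{2\mu}B\big)_N,
\]
defined a priori for $\real\,\lambda\gg 0$ and $\real\,\mu\gg 0$. For such parameters the factor $|F(\zeta)|^{2\lambda}$ inside $R^{\lambda}$ absorbs the singularities of $u$ across $X_\zeta$, making $HR^{\lambda}\wedge g$ a smooth form in $\zeta$; similarly $|\eta|^{2\mu}B$ is smooth across the diagonal $\Delta$. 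Hence $T(\lambda,\mu)$ is simply a smooth form wedged with the pseudomeromorphic tensor product $R(z)\otimes dz$ in the $z$-variable.

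The core step is to show that $T$ extends as a current-valued jointly holomorphic function to a bi-neighborhood of $(0,0)\in\C^2$. I would apply Hironaka's theorem to produce a smooth modification $\pi\colon\tilde Y\to\Omega_\zeta\times\Omega_z$, acting non-trivially only in the $\zeta$-factor, on which both $\pi^*F(\zeta)$ and $\pi^*\eta(\zeta,z)$ are locally principal with monomial generators $s^a$ and $t^b$. On $\tilde Y$ the weight $|F|^{2\lambda}|\eta|^{2\mu}$ then factors as $|s|^{2a\lambda}|t|^{2b\mu}$ times a smooth nowhere-vanishing term, and any such monomial expression manifestly admits a joint analytic continuation in $(\lambda,\mu)$. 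Pushing down by $\pi_*$ and tensoring in the $z$-slot with the pseudomeromorphic $R(z)\otimes dz$ (which is unaffected by $\pi$) yields the desired joint continuation; this is the bi-parameter analogue of \cite[Prop.~2.1]{AW2}.

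Once the joint continuation is in hand, I would compute $T(0,0)$ by two iterated limits. Setting $\mu=0$ first: for $\real\,\lambda\gg 0$ we have $|\eta|^{2\mu}B|_{\mu=0}=B$ by standard pseudomeromorphic regularization, so $T(\lambda,0)=R(z)\wedge dz\wedge(HR^{\lambda}\wedge g\wedge B)_N$, and then $T(0,0)$ is the left-hand side of \eqref{tu}. Setting $\lambda=0$ first: $R^{\lambda}|_{\lambda=0}=R$ in the pseudomeromorphic sense, so $T(0,\mu)=R(z)\wedge dz\wedge(HR\wedge g\wedge|\eta|^{2\mu}B)_N$, and then $T(0,0)$ is the right-hand side. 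The unique value $T(0,0)$ matches both, giving the claimed identity.

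The hard part is the joint analyticity in the second paragraph: one must principalize $F(\zeta)$ and the diagonal tuple $\eta(\zeta,z)$ simultaneously on $\Omega\times\Omega$, and verify that tensoring with the pseudomeromorphic factor $R(z)\otimes dz$ in the untouched $z$-slot preserves the continuation. The restriction to $X\setminus X_{sing}$ in the $z$-variable is convenient precisely because there $R(z)$ takes the explicit Coleff-Herrera form \eqref{senap}, which interacts cleanly with the Hironaka modification in the $\zeta$-variable and makes the tensor-product step routine.
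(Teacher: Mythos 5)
Your strategy---realizing both sides of \eqref{tu} as iterated specializations of a single jointly holomorphic two-parameter family $T(\lambda,\mu)$---is genuinely different from the paper's, but its central claim is a gap, not a routine verification. A current-valued continuation of $T(\lambda,\mu)$ that is holomorphic in a full bi-neighborhood of $(0,0)$ is exactly the kind of statement that fails in general for products of residue-type currents whose singular loci meet: here $\{F(\zeta)=0\}=X_\zeta$ and $\{\eta=0\}=\Delta$ intersect along $\Delta^X$, so on any resolution the exceptional divisors lying over $\Delta^X$ carry both $|F\circ\pi|^{2\lambda}$ and $|\eta\circ\pi|^{2\mu}$; the resulting local integrals acquire poles along hyperplanes $a\lambda+b\mu+j=0$ with $a,b>0$, which pass through the origin. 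This is precisely the mechanism behind the non-commutativity of iterated limits in Coleff--Herrera theory, and it means that, absent further input, the two iterated evaluations $T(\lambda,0)|_{\lambda=0}$ and $T(0,\mu)|_{\mu=0}$ need not agree, nor need either one equal a putative value $T(0,0)$. Relatedly, your modification cannot act ``non-trivially only in the $\zeta$-factor'': principalizing $\eta=\zeta-z$ forces a blow-up along the diagonal, which mixes the variables, destroys the ``untouched $z$-slot'' in which you want to keep $R(z)\wedge dz$ as a mere tensor factor, and runs into the problem that pseudomeromorphic currents cannot in general be pulled back under modifications.

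What actually closes the argument---and what the paper does---is much cheaper. Each side of \eqref{tu} exists separately as a one-parameter continuation, and on $(\Omega\setminus X_{sing})\times(\Omega'\setminus X_{sing})$ their difference is a pseudomeromorphic current of bidegree $(j-k,\,p+j-1)$ in each component, which vanishes off $\{z\in X\}$ (because of the factor $R_p(z)$) and off $\Delta$ (because $B$ is smooth there, so the two regularizations trivially coincide away from $\eta=0$). Hence the difference is supported on $\Delta^X$, of codimension $2N-n=p+N>p+j-1$ for $j\le N$, and the dimension principle of Proposition~\ref{grad/stod} forces it to vanish. If you want to salvage the two-parameter framework you would still have to import exactly this support-plus-codimension information to control the hyperplanes of poles through the origin, at which point the detour through joint analyticity buys nothing; I would recommend replacing the second and third paragraphs of your argument by the dimension-principle step.
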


\begin{proof}
Recall from Section~\ref{Rsektion} that in $\Omega\setminus X_{sing}$, $R$ is a smooth form times
$R_p$. Notice that
\begin{equation*}
T_{jk}:=R_p(z)\wedge R^{\lambda}_k\wedge B_{j-k}|_{\lambda=0} -
R_p(z)\wedge R_k \wedge |\eta|^{2\lambda} B_{j-k}|_{\lambda=0}, \,\,\,j\leq N.
\end{equation*}
is a pseudomeromorphic current in $\Omega\times \Omega$
of bidegree $(j-k,p+k+j-k-1)=(j-k,p+j-1)$ that clearly vanishes outside $X_z$. It also vanishes
outside $\Delta$ since $B$
is smooth there. Thus $T_{jk}$ has support contained in $\Delta^X\simeq X$, which has codimension 
$2N-n=p+N$ in $\Omega\times\Omega$. Since $p+N>p+j-1$ for $j\leq N$, it follows from the dimension principle
that $T_{jk}$ must vanish; in particular, $R_p(z)\wedge R^{\lambda}_k\wedge B_{j-k}|_{\lambda=0}=0$ 
for $k<p$ since $R_k=0$ for $k<p$. We conclude that \eqref{tu} holds in 
$(\Omega\setminus X_{sing})\times (\Omega'\setminus X_{sing})$ since $T_{jk}=0$ there.
\end{proof}

Notice that the right hand side of \eqref{tu} only involves $B_j$ with $j\leq n$ since all terms in
$HR$ have degree at least $p$ in $d\eta$. 
If $\mathfrak{Re} \, \lambda \gg 0$ we may replace $g$ by $g\wedge |\eta|^{2\lambda}B$ in \eqref{huga} 
and combining with Lemma~\ref{penna} we get
\begin{equation}\label{ipod}
R(z)\wedge dz \wedge (HR^{\lambda}\wedge g\wedge B)_N\big|_{\lambda=0}= 
R(z)\wedge dz \wedge (HR\wedge g\wedge |\eta|^{2\lambda} B)_N\big|_{\lambda=0}
\end{equation}
\begin{eqnarray*}
\,\,\,\,\, &=&\omega(z)\wedge [X\times X] \wedge \big(\gamma(\zeta) \lrcorner 
(H\wedge g\wedge |\eta|^{2\lambda}B)\big)_N \big|_{\lambda=0} \\
\,\,\,\,\, &=& \omega(z)\wedge [X\times X] \wedge 
\big(\gamma(\zeta)\lrcorner \sum_{j=1}^n(H\wedge g)_{N-j}\wedge |\eta|^{2\lambda}B_j\big)\big|_{\lambda=0}
\end{eqnarray*}
in $(\Omega\setminus X_{sing})\times (\Omega'\setminus X_{sing})$. 
Since $B_j=\ordo(|\eta|^{-2j+1})$, we see that $B_j$ is locally integrable on $X_{reg}\times X_{reg}$ for $j\leq n$.
It is thus innocuous to put $\lambda=0$ in the right hand side of \eqref{ipod} as long as we restrict our attention 
to $X_{reg}\times X'_{reg}$. Notice that the integral kernel
\begin{equation}\label{k}
k(\zeta,z) := \pm \big(\gamma(\zeta)\lrcorner \sum_{j=1}^n(H\wedge g)_{N-j}\wedge B_j\big)_{(n)}
\end{equation}
is almost semi-meromorphic on $X\times X'$ and locally integrable on $X_{reg}\times X'_{reg}$.

In view of \eqref{ASKoppel}, \eqref{ahh}, \eqref{huga}, \eqref{ipod}, and \eqref{k} we have that
\begin{equation}\label{prelim}
-\nabla_{f(z)} \big(\omega(z)\wedge k(\zeta,z)\big) = 
\omega \wedge [\Delta^X] - \omega(z)\wedge p(\zeta,z)
\end{equation}
in the current sense on $X_{reg}\times X'_{reg}$. Combined with Lemma~\ref{omegalemma} this gives
\begin{lemma}\label{ko}
With $k(\zeta,z)$ and $p(\zeta,z)$ defined by \eqref{k} and \eqref{p} respectively, we have
\begin{equation*}
\debar k(\zeta,z)=[\Delta^X] - p(\zeta,z)
\end{equation*}
in the current sense on $X_{reg}\times X'_{reg}$.
\end{lemma}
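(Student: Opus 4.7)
The plan is to read the lemma off from the preceding identity \eqref{prelim} by \emph{dividing out $\omega(z)$} via Lemma~\ref{omegalemma}. First I observe that both kernels $k(\zeta,z)$ and $p(\zeta,z)$ defined in \eqref{k} and \eqref{p} are scalar-valued: each is $\gamma(\zeta)\lrcorner\,\cdot$ applied to components of $H\w g\w B$, and since the Hefer morphism $H=\sum_k H^0_k$ lands in $E_0\simeq\C$ while $\gamma(\zeta)$ absorbs the $E_\bullet(\zeta)$-values via the interior product, the result carries no bundle indices. Consequently $f(z)$ annihilates $k$, so Leibniz together with $\nabla_f\omega=0$ (i.e., $f\omega=\dbar\omega$, cf.\ \eqref{omega}) yields, on $X_{reg}\times X'_{reg}$ where $\omega$ is smooth,
\begin{equation*}
-\nabla_{f(z)}\bigl(\omega(z)\w k(\zeta,z)\bigr)=\bigl(\dbar\omega(z)-f(z)\omega(z)\bigr)\w k(\zeta,z)\pm\omega(z)\w\dbar k(\zeta,z)=\pm\,\omega(z)\w\dbar k(\zeta,z).
\end{equation*}
Substituting into \eqref{prelim} and rearranging gives
\begin{equation*}
\omega(z)\w\bigl(\dbar k(\zeta,z)-[\Delta^X]+p(\zeta,z)\bigr)=0
\end{equation*}
(up to signs) as currents on $X_{reg}\times X'_{reg}$.

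Next I cancel $\omega(z)$. Extracting the $E^0$-component reduces the task to showing that $\omega_0(z)\w(\dbar k-[\Delta^X]+p)=0$ forces $\dbar k-[\Delta^X]+p=0$. By Proposition~\ref{fundprop} (and the local computation leading to \eqref{lok}), on $X_{reg}$ the form $\omega_0$ is smooth and has local expression $\alpha\,dz_1\w\cdots\w dz_n$ with $\alpha$ a pointwise nonzero section of $E^0$; hence wedging by $\omega_0(z)$ is injective on scalar currents of bidegree $(0,*)$ in $z$. The kernels $k$ and $p$ are built so that \eqref{KochP} makes sense, so each has bidegree $(0,*)$ in $z$, $\dbar$ preserves this, and the relevant $(n,*)_\zeta\otimes(0,*)_z$ component of $[\Delta^X]$ carries no $dz_j$-factor. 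The cancellation is therefore valid and delivers $\dbar k=[\Delta^X]-p$ on $X_{reg}\times X'_{reg}$, which is just the injectivity counterpart of Lemma~\ref{omegalemma}.

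The main obstacle is this last bidegree bookkeeping: since $[\Delta^X]$ is a singular diagonal current, one must carefully justify that it contributes to our identity only in bidegrees in $z$ for which $\omega_0(z)\w$ is honestly injective. Once this verification is in place, the proof is just Leibniz applied to \eqref{prelim} using $\nabla_f\omega=0$.
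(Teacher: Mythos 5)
Your proposal is correct and follows the paper's own route: the paper derives Lemma~\ref{ko} precisely by combining \eqref{prelim} with Lemma~\ref{omegalemma}, using $\nabla_f\omega=0$ to identify $-\nabla_{f(z)}(\omega(z)\w k)$ with $\pm\,\omega(z)\w\debar k$ and then dividing out $\omega_0(z)$. The only cosmetic remark is that the injectivity of $T\mapsto\omega_0\w T$ is cleanest obtained directly from Lemma~\ref{omegalemma} by testing against forms $\xi=\omega_0\w\xi'$, rather than via a pointwise nonvanishing claim for the coefficient $\alpha$ in \eqref{lok}, which the paper never asserts.
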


We can write our integral kernels $p(\zeta,z)$ and $k(\zeta,z)$ in terms of the 
structure form $\omega$ as follows:
Let $F$ be a trivial vector bundle over $\Omega\times \Omega$ with basis elements
$\epsilon_1,\ldots,\epsilon_N$. Now replace each occurrence of $d\eta_j$ in $H$ and $g$ by 
$\epsilon_j$ and let $\hat{H}$ and $\hat{g}$ be the forms so obtained. Then
\begin{equation*}
(H\wedge g)_N=\epsilon^*_N\wedge \cdots \wedge \epsilon^*_1\lrcorner \big(d\eta_1\wedge \cdots\wedge d\eta_N
\wedge (\hat{H}\wedge \hat{g})_N\big),
\end{equation*} 
where $\{\epsilon^*_j\}$ is the dual basis and the lower index $N$ on the right hand side means 
the term with degree $N$ in the $\epsilon_j$. If $C=C(\zeta,z)$ is the invertible holomorphic function 
defined by $d\eta =C d\zeta+ \cdots$, we thus have, cf., \eqref{staty},
\begin{equation}\label{palt}
p(\zeta,z)
\pm C \epsilon^*_N\wedge \cdots \wedge \epsilon^*_1 \lrcorner (\hat{H}\wedge \hat{g})_N\wedge \omega(\zeta).
\end{equation}
Similarly, we get that
\begin{equation}\label{kalt}
k(\zeta,z) =
\pm C \epsilon^*_N\wedge \cdots \wedge \epsilon^*_1\lrcorner 
\sum_{j=1}^n(\hat{H}\wedge \hat{g})_{N-j}\wedge \hat{B}_j \wedge \omega(\zeta).
\end{equation}

\section{Koppelman formulas on $X$ (in the intrinsic context)}\label{KoppelsektionII}
Let $X$ be a reduced complex space of pure dimension $n$. Locally, $X$ can be embedded as a subvariety 
of a pseudoconvex domain in some $\C^N$, so let us, for notational convenience, assume that $X$ can be embedded,
$X\hookrightarrow \Omega$, in a pseudoconvex domain $\Omega\subset \C^N$.
Then, following the previous section, for any $\Omega'\Subset \Omega$ we can construct integral
kernels $k(\zeta,z)$ and $p(\zeta,z)$ which are almost semi-meromorphic on $X\times X'$, where $X'=X\cap \Omega'$,
such that \eqref{prelim} and Lemma~\ref{ko} hold. Moreover, $k(\zeta,z)$ and $p(\zeta,z)$ are
locally integrable on $X_{reg}\times X'_{reg}$ and smooth on $X_{reg}\times \Omega'$ respectively.
 
Now assume that $\mu(\zeta)\in \W_{0,q}(X)$. Since $k(\zeta,z)$ and $p(\zeta,z)$ are almost semi-meromorphic,
the products $k(\zeta,z)\wedge \mu(\zeta)$ and $p(\zeta,z)\wedge \mu(\zeta)$ are well-defined currents 
in $\W(X_{\zeta}\times X'_z)$ in view of Proposition~\ref{multprop}. 
Let $\pi\colon X_{\zeta}\times X_z\to X_z$ be the projection
and put $\K \mu (z) = \pi_* (k(\zeta,z)\wedge \mu(\zeta))$ and
$\Proj \mu (z) = \pi_*(p(\zeta,z)\wedge \mu(\zeta))$. Since $k$ and $p$ have compact support in $\zeta\in \Omega$,
$\K \mu$ and $\Proj \mu$ are well-defined currents in $\W(X'_z)$, and in fact, $\Proj \mu (z)$ is a 
smooth function in $\Omega'$ since $p(\zeta,z)$ is smooth in $z\in\Omega'$; if we choose the weight $g$ to be holomorphic 
in $z$, then $\Proj \mu (z)$ is holomorphic in $\Omega'$.
It is of course natural to write
\begin{equation}\label{Kop}
\K \mu (z) = \int_{X_{\zeta}} k(\zeta,z)\wedge \mu(\zeta), \quad
\Proj \mu (z) = \int_{X_{\zeta}} p(\zeta,z)\wedge \mu(\zeta).
\end{equation}

\begin{lemma}\label{smooth}
Let $\mu \in \W_{0,q}(X)$.
\begin{itemize}
\item[(i)] If $\mu$ is smooth in a neighborhood of a given point $x\in X'_{reg}$, then $\K \mu (z)$ is smooth
in a neighborhood of $x$.

\item[(ii)] If $\mu$ vanishes in a neighborhood of $x\in X'$, then $\K \mu (z)$ is smooth 
close to $x$.
\end{itemize}
\end{lemma}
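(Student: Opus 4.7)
My plan is to prove (ii) first and then reduce (i) to (ii).

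For (ii), I would use the explicit formula \eqref{kalt} for $k(\zeta,z)$. The $z$-dependence of $k$ enters only through the holomorphic function $C(\zeta,z)$, the holomorphic Hefer morphisms $\hat H(\zeta,z)$, the smooth weight $\hat g(\zeta,z)$, and the Bochner--Martinelli factors $\hat B_j(\zeta,z)$. Of these, only $\hat B_j$ has any singularity in $z$, and that singularity lives on the diagonal $\{\zeta=z\}$. Now suppose $\mu$ vanishes on an open set $V\ni x$ and pick an open $V'\Subset V$ containing $x$. For $z\in V'$ the support of $\mu$ lies in the closed set $X\setminus V$, which is positively separated from $\overline{V'}$, so $k(\zeta,z)$ and each of its $z$-derivatives $\partial_z^{\alpha}\partial_{\bar z}^{\beta}k(\zeta,z)$ are smooth in $z\in V'$, uniformly in $\zeta$, and remain almost semi-meromorphic in $\zeta$. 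By Proposition~\ref{multprop}, each $\partial_z^{\alpha}\partial_{\bar z}^{\beta}k(\zeta,z)\wedge\mu(\zeta)$ is a well-defined current in $\W(X_{\zeta}\times V'_z)$. Since the projection $\pi$ is in the $\zeta$-direction, $\partial_z^{\alpha}\partial_{\bar z}^{\beta}$ commutes with $\pi_*$, so $\partial_z^{\alpha}\partial_{\bar z}^{\beta}\K\mu(z)=\pi_*(\partial_z^{\alpha}\partial_{\bar z}^{\beta}k\wedge\mu)$ exists and is continuous on $V'$; hence $\K\mu\in C^{\infty}(V')$.

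For (i), choose an open neighborhood $U\subset X_{reg}$ of $x$ on which $\mu$ is smooth, and a cutoff $\chi\in C^{\infty}_c(U)$ with $\chi\equiv 1$ on a smaller open set $U'\ni x$. Decompose $\mu=\chi\mu+(1-\chi)\mu$. The term $(1-\chi)\mu$ vanishes on $U'$, so by (ii) the current $\K((1-\chi)\mu)$ is smooth in a neighborhood of $x$. For $\K(\chi\mu)$, observe that $\chi\mu$ is a smooth compactly supported $(0,q)$-form on $X_{reg}\cap U$, and that on $X_{reg}$ the structure form $\omega$ is smooth by Proposition~\ref{fundprop} (all the factors $\sigma_F$ and $\alpha^k$ are smooth off $X^0\cup\cdots\cup X^{n-1}\subset X_{sing}$). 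Consequently, by \eqref{kalt}, the restriction of $k(\zeta,z)$ to $\operatorname{supp}(\chi)\times U'\subset X_{reg}\times X_{reg}$ is a classical Cauchy--Fantappi\`e--Leray type kernel on the $n$-dimensional complex manifold $X_{reg}$: smooth off the diagonal, with only the integrable Bochner--Martinelli singularity on the diagonal. Standard regularity of such Koppelman-type integral operators (see \cite{Aint}) then yields $\K(\chi\mu)\in C^{\infty}$ in a neighborhood of $x$.

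The main obstacle is the conceptual step in (ii): one is pairing a possibly very singular pseudomeromorphic current $\mu\in\W(X)$ with a kernel $k(\zeta,z)$ that is almost semi-meromorphic in $\zeta$, and must conclude $C^{\infty}$-regularity in $z$. This is precisely what Proposition~\ref{multprop} enables, combined with the fact that differentiation in $z$ commutes with the $\zeta$-pushforward. Once (ii) is in hand, the content of (i) reduces to a standard smoothness statement for Koppelman operators on the smooth manifold $X_{reg}$, since away from $X_{sing}$ every factor entering $k$ except the Bochner--Martinelli singularity is smooth; this classical step requires no input beyond \cite{Aint}.
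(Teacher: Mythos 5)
Your proposal is correct and follows essentially the same route as the paper: part (ii) from the smoothness of $k(\zeta,z)$ in $z$ when $\zeta$ stays away from $x$, and part (i) by cutting off to reduce to a smooth compactly supported $\mu$ near $x\in X_{reg}$, where $\omega$ is smooth and the only singularity of the kernel is the Bochner--Martinelli one on the diagonal. The single difference is that you invoke "standard regularity" of such kernels where the paper states and sketches a proof of an explicit lemma (kernels $\sum_m\varphi_m/\Phi^{\nu+m}$, $\nu\le d-1$, map $C^k_c$ to $C^k$); since the paper itself remarks that this lemma "should be well-known", this is not a substantive gap.
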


\begin{proof}
Since $k(\zeta,z)$ is smooth in $z$ close to $x$ if $\zeta$ avoids a neighborhood of $x$, (ii) follows. 
To see (i) it is enough to assume that $\mu$ is smooth and has compact support close to $x\in X_{reg}$. 
Close to the point $(x,x)$ $X\times X$ is a smooth manifold, $\omega(\zeta)$ is smooth,
and $\hat{B}_{j} \sim |\zeta-z|^{-2j+1}$. Thus, (i)
follows from the following lemma, cf., the definition \eqref{k} and \eqref{kalt} of $k(\zeta,z)$.
\end{proof}

\begin{lemma}
Let $\Phi$ be a non-negative function on $\mathbb{R}^d_x\times \mathbb{R}^d_y$ such that 
$\Phi^2$ is smooth and $\Phi \sim |x-y|$. For each integer $m\geq 0$, let $\varphi_{m}$ denote an 
arbitrary smooth function that is $\ordo(|x-y|^m)$, and let $\mathcal{E}_{\nu}$ denote a
finite sum $\sum_{m\geq 0} \varphi_m/\Phi^{\nu+m}$. If $\nu\leq d-1$ and $\xi\in C^k_c(\mathbb{R}^d)$, then
\begin{equation*}
T\xi (x) = \int_{\mathbb{R}^d_y} \mathcal{E}_{\nu}(x,y) \xi(y) dy
\end{equation*}
is in $C^k(\mathbb{R}^d)$.
\end{lemma}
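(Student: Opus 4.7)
The strategy is to translate the singularity to a fixed location by the substitution $y=x+h$, rewriting
\begin{equation*}
T\xi(x)=\int_{\R^d}\mathcal{E}_\nu(x,x+h)\,\xi(x+h)\,dh,
\end{equation*}
so that the integrand is singular at $h=0$ uniformly in $x$. Since $\mathcal{E}_\nu(x,x+h)=\ordo(|h|^{-\nu})$ and $\nu\le d-1$, this kernel is locally integrable in $h$, and together with compact support of $\xi$ this makes $T\xi$ well-defined and continuous. The plan is to differentiate under the integral sign, exploiting two facts: the class $\mathcal{E}_\nu$, viewed as a family of functions of $(x,h)$, is stable under $\partial_{x_j}$; and an $x$-derivative acting on the integrand either stays in $\mathcal{E}_\nu$ (if it hits the kernel) or becomes $(\partial_j\xi)(x+h)$ (if it hits $\xi$). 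Iterating $k$ times, every $\partial^\alpha(T\xi)$ with $|\alpha|\le k$ is a finite sum of integrals of $\mathcal{E}_\nu$-kernels against $\partial^\beta\xi$ with $|\beta|\le k$, each continuous in $x$ by dominated convergence. This yields $T\xi\in C^k(\R^d)$.

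The key technical step is the stability of $\mathcal{E}_\nu$ under $\partial_{x_j}$ in the $(x,h)$-coordinates. Take a generic term $\tilde\varphi_m(x,h)/\tilde\Phi(x,h)^{\nu+m}$, where $\tilde\varphi_m(x,h):=\varphi_m(x,x+h)$ and $\tilde\Phi^2=\tilde\psi$ is smooth with $\tilde\psi\sim|h|^2$. Since $\tilde\varphi_m$ is smooth and vanishes to order $m$ on $\{h=0\}$, Hadamard's lemma gives $\tilde\varphi_m=\sum_{|\beta|=m}h^\beta g_\beta(x,h)$ with $g_\beta$ smooth, so $\partial_{x_j}\tilde\varphi_m=\sum_{|\beta|=m}h^\beta\partial_{x_j}g_\beta$ still satisfies $\partial_{x_j}\tilde\varphi_m=\ordo(|h|^m)$; hence $(\partial_{x_j}\tilde\varphi_m)/\tilde\Phi^{\nu+m}$ is again of $\mathcal{E}_\nu$ type. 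Similarly, $\tilde\psi=\sum h_ih_j G_{ij}(x,h)$ with $G_{ij}$ smooth, giving $\partial_{x_j}\tilde\psi=\ordo(|h|^2)$. Therefore
\begin{equation*}
\tilde\varphi_m\cdot\partial_{x_j}\bigl(1/\tilde\Phi^{\nu+m}\bigr)=-\tfrac{\nu+m}{2}\,\tilde\varphi_m\,\tilde\psi^{-(\nu+m)/2-1}\,\partial_{x_j}\tilde\psi
\end{equation*}
has a smooth numerator of order $\ordo(|h|^{m+2})$ over $\tilde\Phi^{\nu+(m+2)}$, an $\mathcal{E}_\nu$-term with index $m'=m+2$. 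Summing the two contributions shows $\partial_{x_j}[\tilde\varphi_m/\tilde\Phi^{\nu+m}]\in\mathcal{E}_\nu$.

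The principal obstacle is precisely this invariance: naively, $\partial_{x_j}$ should increase the singularity order by one, but the vanishing-order bookkeeping via Hadamard exactly compensates each differentiation. The essential subtlety is that $\Phi$ itself is not smooth (only $\Phi^2$ is), which forces one to differentiate $\tilde\psi$ rather than $\tilde\Phi$; since $\partial_{x_j}\tilde\psi=\ordo(|h|^2)$, each differentiation of $\tilde\psi^{-s}$ raises the exponent of $\tilde\psi$ by one while gaining two powers of $|h|$ in the numerator, which is exactly what is needed to preserve the $\mathcal{E}_\nu$-structure and keep $\nu$ unchanged under the iteration.
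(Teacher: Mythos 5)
Your argument is correct and is essentially the paper's: the substitution $y=x+h$ turns $\partial_{x_j}$ acting on the kernel into the diagonal derivative $L_j=\partial/\partial x_j+\partial/\partial y_j$, and your Hadamard-lemma computation is exactly the verification that $L_j\mathcal{E}_{\nu}=\mathcal{E}_{\nu}$ which the paper dismisses as ``readily checked (e.g., by Taylor expanding)''. The only divergence is the final justification --- the paper regularizes with $|x-y|^{2\lambda}$ and analytically continues to $\lambda=0$, while you differentiate under the integral sign directly via dominated convergence --- and both are valid.
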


This lemma should be well-known, but for the reader's convenience we sketch a proof.
\begin{proof}[Sketch of proof]
Let $L_j=\partial/\partial x_j + \partial / \partial y_j$. It is readily checked (e.g., by Taylor expanding)
that $L_j \varphi_m = \varphi_m$ from which we conclude that $L_j \mathcal{E}_{\nu}=\mathcal{E}_{\nu}$. 
Let
\begin{equation*}
T^{\lambda}\xi (x)= \int_{\mathbb{R}^d_y} |x-y|^{2\lambda}\mathcal{E}_{\nu}(x,y) \xi(y) dy.
\end{equation*}
For $\real\, \lambda >-1/2$, it is clear that $T^{\lambda}\xi$ is an analytic $C^0(\mathbb{R}^d)$-valued function.
Moreover, for $\real\, \lambda >0$, one easily checks by using $L_j \mathcal{E}_{\nu}=\mathcal{E}_{\nu}$
that all distributional derivatives of order $\leq k$ of $T^{\lambda}\xi$ are continuous and 
analytic in $\lambda$ for $\real\, \lambda >-1/2$. It follows that $T\xi=T^0\xi\in C^k(\mathbb{R}^d)$. 
\end{proof}

\begin{proposition}\label{reglosning}
If $\mu$ is a section of $\mbox{Dom}_q\, \debar_X$ over $X$, then $\debar \mu \in \W_{0,q+1}(X)$ and 
the current equation
\begin{equation}\label{reglosn1}
\mu = \debar \K \mu + \K (\debar \mu) +\Proj \mu
\end{equation}
% \begin{equation*}
% \mu = \K (\debar \mu) + \Proj \mu, \quad q=0,
% \end{equation*}
holds on $X'_{reg}=X_{reg}\cap \Omega'$.
\end{proposition}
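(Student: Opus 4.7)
The hypothesis $\mu\in\mbox{Dom}_q\,\dbar_X$ immediately yields $\dbar\mu\in\W_{0,q+1}(X)$ by Lemma~\ref{urdjur}(i). Since $k$ and $p$ are almost semi-meromorphic on $X\times X'$ and compactly supported in $\zeta$, Proposition~\ref{multprop} guarantees that $k\w\mu$, $p\w\mu$ and $k\w\dbar\mu$ are well-defined $\W$-currents on $X_\zeta\times X'_z$, so their $\pi$-pushforwards $\K\mu$, $\Proj\mu$, $\K(\dbar\mu)$ make sense as $\W$-currents on $X'$. The plan is to establish the current identity
\begin{equation*}
\dbar\bigl(k(\zeta,z)\w\mu(\zeta)\bigr)=[\Delta^X]\w\mu-p(\zeta,z)\w\mu(\zeta)-k(\zeta,z)\w\dbar\mu(\zeta)
\end{equation*}
(signs are whatever they must be) on all of $X_\zeta\times X'_{reg}$, and then push it down. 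Because $k$ and $p$ have compact $\zeta$-support, $\pi$ is proper on the supports in play and $\dbar$ commutes with $\pi_*$; combined with $\pi_*([\Delta^X]\w\mu)=\mu$ this yields \eqref{reglosn1} on $X'_{reg}$ at once.

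On the Zariski-open set $X_{reg}\times X'_{reg}$ the identity is nothing but Lemma~\ref{ko} together with the ordinary Leibniz rule, since $\mu$ and $\dbar\mu$ are pseudomeromorphic in $\zeta$ and $k$, $p$ are smooth in the appropriate variables on this set. The content of the proof is therefore the extension across $X_{sing}\times X'_{reg}$. The right-hand side is a $\W$-current on $X\times X'_{reg}$, so by the SEP it is uniquely determined by its restriction to $X_{reg}\times X'_{reg}$. Consequently it suffices to prove that $\dbar(k\w\mu)$ also belongs to $\W(X\times X'_{reg})$, for then the two $\W$-currents agreeing on $X_{reg}\times X'_{reg}$ must coincide on $X\times X'_{reg}$.

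For this $\W$-property one uses the structure \eqref{kalt} of $k$, which exhibits $k(\zeta,z)$ as an almost semi-meromorphic factor on $X\times X'$ wedged with $\omega(\zeta)$; schematically $k\w\mu=\Psi\w(\mu\w\omega)$, so the Leibniz rule writes $\dbar(k\w\mu)$ as a combination of $(\dbar\Psi)\w(\mu\w\omega)$ and $\Psi\w\dbar(\mu\w\omega)$. The defining condition of $\mbox{Dom}\,\dbar_X$ is precisely, via \eqref{pellepenna} and Lemma~\ref{urdjur}, that $\dbar(\mu\w\omega)$ has the SEP on $X$, and then Proposition~\ref{multprop} places each summand back into $\W$. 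The cleanest way to execute this rigorously is an $|h(\zeta)|^{2\lambda}$-regularization with $\{h=0\}=X_{sing}$: for $\real\,\lambda\gg0$ Leibniz applies without scruple, and on passing to $\lambda=0$ the only potentially obstructing term is $\dbar|h|^{2\lambda}\w(\mu\w\omega)\big|_{\lambda=0}$, which vanishes by exactly the boundary condition \eqref{urbota}. This regularization–and–passage-to-the-limit, verifying that no spurious residue appears on $X_{sing}\times X'_{reg}$, is the main technical obstacle, and it is exactly here that the $\mbox{Dom}\,\dbar_X$ hypothesis is used in an essential way.
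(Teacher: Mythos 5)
Your architecture differs from the paper's. You aim at the kernel-level current identity $\debar\bigl(k(\zeta,z)\w\mu(\zeta)\bigr)=[\Delta^X]\w\mu-p\w\mu-k\w\debar\mu$ on $X_\zeta\times X'_{z,reg}$ and then push forward by $\pi_*$, whereas the paper transposes: from Lemma~\ref{ko} and Lemma~\ref{smooth} it derives the \emph{smooth} identity \eqref{tuba} for test forms $\phi(z)$, obtains \eqref{reglosn1} by duality when $\mu$ has compact support in $X_{reg}$, and then handles general $\mu$ by inserting the cutoff $\chi_\delta=\chi(|h(\zeta)|/\delta)$ and showing $\K(\debar\chi_\delta\w\mu)\to 0$, using that $\debar\chi_\delta\w\mu\w\omega\to 0$ and that away from the diagonal the singularities of $k$ come only from $\omega(\zeta)$. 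Your treatment of the extension across $X_{sing}\times X'_{reg}$ --- factoring $k$ as a smooth form times $\omega(\zeta)$ off the diagonal, cf.\ \eqref{kalt}, and killing the residue with \eqref{urbota} --- is sound and is essentially the paper's cutoff step in different clothing.

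The gap is on the regular part, at the diagonal. You assert that on $X_{reg}\times X'_{reg}$ the identity is ``Lemma~\ref{ko} together with the ordinary Leibniz rule'' because $k$ and $p$ are ``smooth in the appropriate variables'' there. But $k$ is not smooth along $\Delta^X$: it carries the Bochner--Martinelli singularity $\hat{B}_j\sim|\zeta-z|^{-2j+1}$, and $\mu$ is an arbitrary current in $\W_{0,q}$, not a smooth form. The Leibniz rule $\debar(k\w\mu)=\debar k\w\mu\pm k\w\debar\mu$ is automatic only when one factor is smooth; for the product of the singular kernel $k$ with a current $\mu$ --- a product that exists only via the analytic continuation of Proposition~\ref{multprop} --- this identity \emph{is} the Koppelman formula and must be proved. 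Moreover the term $[\Delta^X]\w\mu$ is not covered by Proposition~\ref{multprop} at all: $[\Delta^X]$ is supported on a set of positive codimension, hence is not almost semi-meromorphic, and its product with a general current needs a separate definition. For the same reason your claim that ``the right-hand side is a $\W$-current'' is false, since $\mathbf{1}_{\Delta^X}\bigl([\Delta^X]\w\mu\bigr)\neq 0$ violates the SEP; that particular defect is repairable (the diagonal over $X'_{reg}$ is disjoint from $X_{sing}\times X'_{reg}$), but the diagonal one is not. This is precisely the step the paper's duality argument is designed to avoid: in \eqref{tuba} the diagonal contribution arises only when the kernels act on test forms, where Lemma~\ref{ko} applies directly and Lemma~\ref{smooth} makes every term smooth, so pairing with $\mu$ is legitimate. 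To complete your route you would need either to transpose as the paper does, or to regularize $B$ along the diagonal (e.g.\ with $|\eta|^{2\lambda}$ as in Section~\ref{KoppelsektionI}) and justify the limit against a general $\mu\in\W_{0,q}$.
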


\begin{proof}
From Proposition~\ref{Bprop} it follows that $\debar \mu \in \W_{0,q+1}(X)$ and so $\K(\debar \mu)$
is a well-defined current in $\W(X')$. Moreover, from Lemma~\ref{ko} it follows that 
if $\phi(z)$ is a test form on $X'_{reg}$, then
\begin{equation}\label{tuba}
\phi(\zeta)=\int_{X_{z}} k(\zeta,z)\wedge \debar \phi(z)
+\debar_{\zeta} \int_{X_{z}} k(\zeta,z)\wedge \phi(z)
+\int_{X_{z}} p(\zeta,z)\wedge \phi(z)
\end{equation}
for $\zeta\in X_{reg}$.
We also see from Lemma~\ref{smooth} that all terms in \eqref{tuba} are smooth on $X$. 
If $\mu$ has compact support in $X_{reg}$, then the proposition follows by duality.

For the general case, let $\chi_{\delta}=\chi(|h|/\delta)$, where $h=h(\zeta)$ is a holomorphic 
tuple cutting out $X_{sing}$. Then the proposition holds for $\chi_{\delta}\mu$. 
Since $k(\zeta,z)\wedge \mu(\zeta)$ and $p(\zeta,z)\wedge \mu(\zeta)$ has the SEP on $X\times X'$,
we have that $\K(\chi_{\delta}\mu)\to \K \mu$ and $\Proj (\chi_{\delta}\mu)\to \Proj \mu$.
Moreover, $\debar \mu \in \W_{0,q+1}(X)$ so $k(\zeta,z)\wedge \debar \mu(\zeta)$ has the SEP, which 
implies that $\K(\chi_{\delta}\debar \mu)\to \K(\debar \mu)$. Hence,
\begin{equation*}
\lim_{\delta\to 0^+} \K(\debar(\chi_{\delta}\mu))=
\K(\debar \mu) + \lim_{\delta\to 0^+} \K(\debar\chi_{\delta}\wedge\mu).
\end{equation*}
The singularities of $k(\zeta,z)$ only  come from  the structure form
$\omega(\zeta)$ when $z$ and $\zeta$ ``far apart'', e.g.,
for $z$ in a compact subset of $X'_{reg}$ and $\zeta$ close to $X_{sing}$. From Lemma~\ref{urdjur}
we have that $\debar \chi_{\delta}\wedge\mu\wedge \omega\to 0$ and so 
$\lim_{\delta\to 0^+} \K(\debar\chi_{\delta}\wedge\mu)=0$ for $z$ in $X'_{reg}$; thus the proposition follows.
\end{proof}

Notice that $\Proj \mu$ in general is smooth. If the weight $g$ is holomorphic in $z$, 
then $\Proj \mu$ is holomorphic in $\Omega'$ for $q=0$ and $0$ for 
$q\geq 1$. In this case, Proposition~\ref{reglosning} thus is a homotopy formula for $\debar$ on $X'_{reg}$ 
in the sense that if $\mu$ is in $\mbox{Dom}_q\,\debar_X$ on $X$ and $\debar \mu=0$, then $\mu$ is holomorphic in $\Omega'$
for $q=0$ and $\mu=\debar \K\mu$ on $X'_{reg}$ for $q\geq 1$. 

\begin{proof}[Proof of Proposition~\ref{Wprop}]
We know that $\K\phi$ is defined and in $\W^X$ if $\phi\in \W^X$. By choosing
the weight $g$ to be holomorphic in $z$, we get that $\Proj \phi$ is in $\hol^X$.
Moreover,
from Proposition~\ref{reglosning} we have that the Koppelman formulas \eqref{Koppel1} and \eqref{Koppel2} hold
on $X'_{reg}$ if, in addition, $\phi$ is in $\mbox{Dom}\,\debar_X$. 

\end{proof}

We do not know whether $\debar \K\mu$ is in $\W^X$ or not, still less whether $\K \mu$ is in $\mbox{Dom}\,\debar_X$ or 
not in general. However, we shall now see that this is true 
if $\mu$ is smooth, and more generally if $\mu$ is obtained by a finite number of applications of $\K$'s.
Notice that $\K\mu$ is only defined in the slightly smaller set $X'$. Therefore, when we in the following lemma consider
products of kernels $\wedge_jk_j(z^j,z^{j+1})$, where 
$(z^1,\ldots,z^m)$ are coordinates on $X\times\dots \times X$, we will
assume that $z^{j+1}\mapsto k_{j+1}(z^{j+1},z^{j+2})$ 
has compact support where $z^{j+1}\mapsto k_j(z^j,z^{j+1})$ is defined.

\begin{lemma}[Main lemma]\label{mainlemma}
Let $k_j$ denote kernels \eqref{k} obtained via local embeddings and arbitrary Hermitian 
free resolutions of $\hol^{\Omega}/\J_X$. Let $(z^1,\ldots,z^m)$ be  coordinates on
$X\times \cdots \times X$ and assume that
$z^{j+1}\mapsto k_{j+1}(z^{j+1},z^{j+2})$ has compact support where $z^{j+1}\mapsto k_j(z^j,z^{j+1})$
is defined. Then, for any $x^m\in X$ and any $\omega\in \mathfrak{S}_{x^m}$, we have
\begin{equation}\label{snus}
\lim_{\delta\to 0^+}\debar \chi(|h(z^m)|/\delta)\wedge \omega(z^m)\wedge \bigwedge_{j=1}^{m-1} k_j(z^j,z^{j+1})=0
\end{equation} 
in the current sense in a neighborhood of $\{x^m\}\times X\times \cdots \times X$.
\end{lemma}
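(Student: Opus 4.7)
The plan is to apply the pseudomeromorphic SEP principle \eqref{pellepenna}, adapted to the product space $X^m$ with the subvariety $V := X_{sing}^{z^m}\times X^{m-1}$ cut out by $h(z^m)$. Write $\mu := \omega(z^m)\wedge\bigwedge_{j=1}^{m-1} k_j(z^j,z^{j+1})$. Once we know that $\mu \in \W(X^m)$ and that $\1_V\dbar\mu = 0$, the same calculation as in the proof of \eqref{pellepenna} (now on $X^m$ with $V$) gives $\dbar\chi(|h(z^m)|/\delta)\wedge\mu \to 0$. For $\mu\in\W(X^m)$: each of $\omega(z^m)$ and $k_j(z^j,z^{j+1})$ is almost semi-meromorphic on $X$ or on $X\times X$, and after pullback by the coordinate projection $X^m\to X$ or $X^m\to X\times X$ it remains almost semi-meromorphic on $X^m$. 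Iterating Proposition~\ref{multprop} (almost semi-meromorphic times $\W$ is $\W$) yields $\mu\in\W(X^m)$, and in particular $\1_V\mu=0$.

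For $\1_V\dbar\mu=0$, split $\dbar=\sum_j\dbar_{z^j}$. When $j<m$ the cutoff $\chi_\delta=\chi(|h(z^m)|/\delta)$ is independent of $z^j$, so multiplication by $\chi_\delta$ commutes with $\dbar_{z^j}$; passing to the limit gives $\1_V\dbar_{z^j}\mu=\dbar_{z^j}\1_V\mu=0$. The essential case is $j=m$. I would use $\nabla_f\omega=0$ (a consequence of \eqref{omega}) together with the fact that only $k_{m-1}$ in the product depends on $z^m$, which gives
\begin{equation*}
\nabla_{f(z^m)}\mu = \pm \bigwedge_{j=1}^{m-2}k_j\wedge \omega(z^m)\wedge\nabla_{f(z^m)} k_{m-1}(z^{m-1},z^m).
\end{equation*}
The Koppelman identity \eqref{prelim}, upgraded to a current identity on $X\times X$, then yields
\begin{equation*}
\omega(z^m)\wedge\nabla_{f(z^m)}k_{m-1} = \pm\bigl(\omega\wedge[\Delta^X_{m-1,m}]-\omega\wedge p_{m-1}\bigr),
\end{equation*}
whose right-hand side lies in $\W(X\times X)$. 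Substituting, $\nabla_{f(z^m)}\mu$ is a wedge of almost semi-meromorphic currents with a $\W$-current, hence is in $\W(X^m)$ by Proposition~\ref{multprop}. Since $f(z^m)\mu\in\W(X^m)$ as well, we obtain $\1_V\dbar_{z^m}\mu=\1_V f(z^m)\mu-\1_V\nabla_{f(z^m)}\mu=0$.

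The main obstacle is the upgrade of \eqref{prelim} from the regular-locus identity stated in the paper to a current identity on all of $X\times X$. This is ultimately the ambient-space identity \eqref{ASKoppel} at $\lambda=0$, which is a genuine pseudomeromorphic current equation on $\Omega\times\Omega$; pushed through \eqref{DLrep} and the definitions \eqref{p}, \eqref{k} it descends to a current equation on $X\times X$ whose two sides are pseudomeromorphic and whose right-hand side is in $\W$, so the SEP forces the regular-part equality to extend globally. A secondary subtlety is that the $k_j$ may be built from different local embeddings and different Hermitian free resolutions of $\hol^\Omega/\J_X$, in which case the $f$ implicit in $\nabla_{f(z^m)}k_{m-1}$ need not match the $f$ attached to $\omega$; this can be handled by shrinking to a common neighborhood of $x^m$ in which both $\omega$ and $k_{m-1}$ can be reconstructed from a single resolution, using that any two Hermitian free resolutions are chain-homotopy equivalent so that the resulting $\omega\wedge k_{m-1}$ differs from the original by a $\W$-current annihilated by $\1_V$.
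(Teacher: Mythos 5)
Your overall strategy --- reducing \eqref{snus} via the mechanism of \eqref{pellepenna} to the vanishing of $\1_V\dbar\mu$ for $V=\{h(z^m)=0\}$, and then controlling $\dbar_{z^m}\mu$ through the Koppelman identity --- breaks down at its key step. The current $\omega(z^m)\w[\Delta^X]$ (in the variables $(z^{m-1},z^m)$) appearing on the right-hand side of your upgraded \eqref{prelim} is supported on the diagonal $\{z^{m-1}=z^m\}$, a subvariety of positive codimension, so it does \emph{not} have the SEP and is not in $\W(X\times X)$. Consequently $\nabla_{f(z^m)}\mu$ is not in $\W(X^m)$ and you cannot conclude $\1_V\nabla_{f(z^m)}\mu=0$ for free: what survives is exactly $\1_V$ of a current carried by $\{z^{m-1}=z^m\}\cap\{h(z^m)=0\}$, and after restricting to that diagonal it is a current of precisely the shape \eqref{snus} with one kernel factor fewer. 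Done correctly, your computation therefore sets up an induction on $m$ rather than closing the argument, and killing the leftover term requires the support and bidegree bookkeeping of the dimension principle together with the estimates $\codim X^k\ge k+1$ for the components $\omega_k$ of the structure form. This is what the paper's proof does: by induction over $m$, off each partial diagonal the limit factors through the case $m-1$, so the limit current $T$ is pseudomeromorphic with support in $(\Delta^X)_{sing}$, and then each component $T^k$ is annihilated by the dimension principle.

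There is also a circularity in your ``main obstacle'' paragraph: the extension of \eqref{prelim} from $X_{reg}\times X'_{reg}$ to a current identity on all of $X\times X'$ is precisely the proposition the paper proves \emph{after}, and by means of, Lemma~\ref{mainlemma}; it cannot be an input to its proof. Your SEP argument for that extension does not apply because neither side of the identity is in $\W$: the $[\Delta^X]$ term charges the diagonal, and a priori the value at $\lambda=0$ of the left-hand side of \eqref{ASKoppel} may charge $X_{sing}\times X'$ or the diagonal as well. Two further points would need care even if this were repaired. First, the Leibniz rule you invoke to pull $\bigwedge_{j\le m-2}k_j$ and then $\omega(z^m)$ out of $\nabla_{f(z^m)}$ is not automatic for products defined by analytic continuation as in Proposition~\ref{multprop}. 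Second, $\omega\in\mathfrak{S}_{x^m}$ is an \emph{arbitrary} structure form, possibly attached to a different embedding and resolution than the one generating $k_{m-1}$, so no identity of the form \eqref{prelim} couples them; the paper's argument sidesteps this entirely by using only that $\omega\in\W$, that $\nabla_f\omega=0$ for its own $f$, and that off the diagonal $k_j(z^j,z^{j+1})$ is a smooth form times $\omega(z^j)$.
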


\begin{proof}
We proceed by induction over $m$. Every $\omega\in\mathfrak{S}_{x^m}$ is in $\W^X$, so $\chi_{\delta}\omega\to \omega$
and hence
\begin{equation*}
-\debar \chi_{\delta}\wedge \omega =\nabla_f(\chi_{\delta}\omega)\to
\nabla_f \omega =0.
\end{equation*} 
Thus the lemma holds for $m=1$ (i.e., when there are no $k$-kernels). 
Now consider the case $m+1$. Recall that the limit in \eqref{snus} is a pseudomeromorphic
current $T$ in a neighborhood of $\{x^{m+1}\}\times X\times \cdots \times X$. When $z^1\neq z^2$, then $k_1(z^1,z^2)$ 
is a smooth form times $\omega(z^1)$, cf., \eqref{kalt}.
Thus, outside $z^1=z^2$, $T$ is a smooth form times the tensor product of $\omega(z^1)$ and a current of the 
form \eqref{snus} in the variables $z^{m+1},\cdots,z^2$; the support of $T$ is thus contained in $\{z^1=z^2\}$
by the induction hypothesis. For a similar reason the support of $T$ must be contained in $\{z^{k}=z^{k+1}\}$ and 
we see that $T$ must have support contained in the diagonal $\Delta=\{z^{m+1}=\cdots =z^1=0\}$. Moreover, the support
of $T$ is clearly also contained in $X_{sing}\times X\times \cdots \times X$. Thus, the support of $T$ is contained in 
$(\Delta^X)_{sing}\subset \Delta$, which has dimension (at most) $n-1$ and hence codimension (at least)
$(m+1)n-(n-1)=mn+1$.

Now let $T^0$ be the component of $T$ obtained from the component $\omega_0(z^{m+1})$. 
Then $T^0$ has bidegree $(mn,m(n-1)+1)$ since each $k_j$ has bidegree $(n,n-1)$.
However, since $m\geq 1$, we have $m(n-1)+1< mn+1$ and so $T^0=0$ by the dimension principle. 
Let $T^1$ be the component of $T$ obtained from $\omega_1(z^{m+1})$. 
Since $\omega_1=\alpha^1\omega_0$ and $\alpha^1$ is smooth outside $X^1$, it follows from what we have just proved that 
$T^1$ has support contained in $(X^1\times X\times \cdots \times X)\cap \Delta \simeq X^1$. This set has 
codimension at least $mn+1+1$ and $T^1$ has bidegree $(*,m(n-1)+1+1)$ so also $T^1=0$ by the dimension principle.
Proceeding in this way we conclude that $T=0$.
\end{proof}

We can now show that Lemma \ref{ko} holds on $X\times X'$.

\begin{proposition}
We have that 
\begin{equation*}
-\nabla_{f(z)} \big(\omega(z)\wedge k(\zeta,z)\big) = 
\omega \wedge [\Delta^X] - \omega(z)\wedge p(\zeta,z)
\end{equation*}
in the current sense on $X\times X'$.
\end{proposition}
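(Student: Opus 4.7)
The plan is to show that the difference $D$ of the two sides, which is a pseudomeromorphic current on $X\times X'$ vanishing on the dense open $X_{reg}\times X'_{reg}$ by \eqref{prelim}, is in fact zero. Both sides are \pmm currents on $X\times X'$: the right-hand side because $\omega\wedge[\Delta^X]=\iota_*\omega$ is the diagonal pushforward of $\omega\in\W(X)$ and $\omega(z)\wedge p(\zeta,z)\in\W(X\times X')$ by Proposition~\ref{multprop}; the left-hand side because $\omega(z)\wedge k(\zeta,z)\in\W(X\times X')$ as well and $\nabla_{f(z)}=f(z)-\dbar_z$ preserves $\PM$. Since $D$ is supported on $V=(X_{sing}\times X')\cup(X\times X'_{sing})$, it suffices to verify that $\1_{X_{sing}\times X'}D=0$ and $\1_{X\times X'_{sing}}D=0$ separately.

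For the first of these I apply $\1_{X_{sing}\times X'}$ termwise. The two right-hand terms vanish: $\1_{X_{sing}\times X'}[\omega\wedge[\Delta^X]]=\iota_*(\1_{X_{sing}}\omega)=0$ by the SEP of $\omega$ on $X$, and $\1_{X_{sing}\times X'}[\omega(z)\wedge p(\zeta,z)]=0$ by the SEP of $\omega\wedge p$ on $X\times X'$. Writing the left-hand term as $-f(z)(\omega\wedge k)+\dbar_z(\omega\wedge k)$ and observing that the subvariety $X_{sing}\times X'$ is cylindrical in the $z$-direction, so $\1_{X_{sing}\times X'}$ commutes with $\dbar_z$, the term becomes $-f(z)\1_{X_{sing}\times X'}(\omega\wedge k)+\dbar_z\1_{X_{sing}\times X'}(\omega\wedge k)=0$ by the SEP of $\omega\wedge k$.

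For the second vanishing the subvariety $X\times X'_{sing}$ is \emph{not} cylindrical with respect to $\dbar_z$, so the commutativity argument fails, and this is where the Main Lemma (Lemma~\ref{mainlemma}) enters. Setting $\chi_\delta(z)=\chi(|h(z)|/\delta)$ for $h$ a holomorphic tuple cutting out $X_{sing}$, the Main Lemma applied with $m=2$, kernel $k_1(z^1,z^2)=k(\zeta,z)$, and structure form $\omega\in\mathfrak{S}_{x}$ at any $x\in X_{sing}$, gives $\dbar\chi_\delta\wedge\omega(z)\wedge k(\zeta,z)\to 0$. Combined with the SEP of $\omega\wedge k$, which yields $\chi_\delta\,\omega\wedge k\to\omega\wedge k$, the product rule $\dbar_z(\chi_\delta\,\omega\wedge k)=\dbar\chi_\delta\wedge\omega\wedge k+\chi_\delta\,\dbar_z(\omega\wedge k)$ gives in the limit $\chi_\delta\,\dbar_z(\omega\wedge k)\to\dbar_z(\omega\wedge k)$, i.e., $\1_{X\times X'_{sing}}\dbar_z(\omega\wedge k)=0$. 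The remaining two terms vanish by the same SEP arguments as in the previous paragraph, and so $\1_{X\times X'_{sing}}D=0$.

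The main obstacle is this second vanishing, and specifically the invocation of the Main Lemma, without which one would have no control over $\dbar_z(\omega\wedge k)$ near the $z$-singular locus. Once both vanishings are established, $D=0$ follows from the support condition $D=\1_V D$ together with the computation rule \eqref{rakneregel}: rewriting $\1_{V_i}D=0$ as $D=\1_{X\setminus V_i}D$ for $i=1,2$ and iterating gives $D=\1_{X\setminus V_1}\1_{X\setminus V_2}D=\1_{X\setminus V}D=0$.
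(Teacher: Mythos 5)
Your overall strategy --- the difference $D$ of the two sides is pseudomeromorphic, vanishes on $X_{reg}\times X'_{reg}$ by \eqref{prelim}, hence is supported on $(X_{sing}\times X')\cup(X\times X'_{sing})$, and one kills the two pieces separately --- is a legitimate reformulation of the paper's argument, and your treatment of $X\times X'_{sing}$ via Lemma~\ref{mainlemma} with $m=2$ is exactly what the paper does. The gap is in the other case. You write the left-hand side as $-f(z)(\omega\wedge k)+\dbar_z(\omega\wedge k)$, but $\nabla_{f(z)}=f(z)-\dbar$ involves the \emph{full} $\dbar=\dbar_\zeta+\dbar_z$ on $X\times X'$ (it must: the current $[\Delta^X]$ in Lemma~\ref{ko} arises from $\dbar B$, and $B$ carries antiholomorphic differentials in both variables). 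Consequently the cylindricity of $X_{sing}\times X'$ in the $z$-direction buys you commutation of $\1_{X_{sing}\times X'}$ with $\dbar_z$ only; it does not commute with $\dbar_\zeta$ across $\{h(\zeta)=0\}$, and the defect is precisely the term $\lim_{\delta\to 0}\dbar\chi(|h(\zeta)|/\delta)\wedge\omega(z)\wedge k(\zeta,z)$. Your argument never confronts this term, and it is not covered by Lemma~\ref{mainlemma} as stated, whose hypotheses place the cutoff and the outer structure form in the same (last) variable, whereas here the cutoff is in $\zeta$ while the outer structure form is $\omega(z)$.

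This term is genuinely the delicate one: near a point of $\Delta^X\cap(X_{sing}\times X_{sing})$ the factors $\omega(z)$, $\omega(\zeta)$ (hidden inside $k$ by \eqref{kalt}) and $B$ all degenerate simultaneously, and no dimension-principle count applies directly. The paper resolves it with an iterated regularization: insert first a cutoff $\chi_\epsilon=\chi(|h(z)|/\epsilon)$ in $z$; with $\epsilon$ fixed, on the support of $\dbar\chi_\delta(\zeta)$ for small $\delta$ one has $\zeta\neq z$, so by \eqref{kalt} the kernel $k(\zeta,z)$ is a smooth form times $\omega(\zeta)$ and Lemma~\ref{mainlemma} with $m=1$ (i.e., $\dbar\chi_\delta\wedge\omega\to 0$) kills the term; only afterwards does one let $\epsilon\to 0$ and invoke the $m=2$ case. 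To repair your proof you would need to reproduce this two-step limit --- equivalently, compute the restriction to $X_{sing}\times X'$ of the left-hand side only after first restricting to $X\times(X'\setminus X'_{sing})$ --- rather than treating the two singular loci independently. The final bookkeeping with \eqref{rakneregel} is fine once both restrictions are actually established.
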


\begin{proof}
Let $\chi_{\delta}=\chi(|h(\zeta)|/\delta)$ and $\chi_{\epsilon}=\chi(|h(z)|/\epsilon)$, where $h$ as before
cuts out $X_{sing}$. From Lemma \ref{ko} we have that
\begin{equation*}
-\nabla_{f(z)} \big(\chi_{\delta}\chi_{\epsilon} \omega(z)\wedge k(\zeta,z)\big) =
\chi_{\delta}\chi_{\epsilon} \omega\wedge [\Delta^X] -
\chi_{\delta}\chi_{\epsilon}\omega(z)\wedge p(\zeta,z) + V(\delta,\epsilon),
\end{equation*}
where
\begin{equation*}
V(\delta,\epsilon)=\debar \chi_{\delta}\wedge \chi_{\epsilon} \omega(z)\wedge k(\zeta,z)+
\chi_{\delta}\debar\chi_{\epsilon}\wedge \omega(z)\wedge k(\zeta,z).
\end{equation*}
Since $\omega$, $k$, $p$, as well as the products $\omega(z)\wedge k(\zeta,z)$ and $\omega(z)\wedge p(\zeta,z)$ 
all are in $\W(X\times X)$, it is enough to see that $\lim_{\epsilon\to 0}\lim_{\delta\to 0}V(\delta,\epsilon)=0$.
We have
\begin{equation}\label{V}
\lim_{\delta\to 0} V(\delta,\epsilon)=
\lim_{\delta\to 0} \debar \chi_{\delta}\wedge \chi_{\epsilon} \omega(z)\wedge k(\zeta,z) +
\debar\chi_{\epsilon}\wedge \omega(z)\wedge k(\zeta,z).
\end{equation}
Since $\chi_{\epsilon}\omega(z)$ is smooth and vanishing in a neighborhood of $X_{sing}$,
$k(\zeta,z)$ is a smooth form times $\omega(\zeta)$, cf., \eqref{kalt}, on the support of $\debar \chi_{\delta}$
if $\delta$ is small enough. Therefore, the first term on the right hand side of \eqref{V} is $0$ by 
Lemma~\ref{mainlemma} with $m=1$. The second term on the right hand side of \eqref{V} tends to $0$ as 
$\epsilon\to 0$, again by Lemma~\ref{mainlemma}.
\end{proof}

\section{The ad~hoc sheaf $\A^X$}\label{asec}
%%and proofs of Theorems \ref{main} and \ref{koppelman} and Proposition~\ref{Wprop}}\label{Asektion}
We are now ready to define the sheaf $\A^X$; it is indeed an ad~hoc definition with respect
to the Koppelman formulas in the intrinsic context.
From the previous two sections we know that we locally 
(and semi-globally) on $X$ can construct integral kernels $k(\zeta,z)$ and $p(\zeta,z)$, cf., \eqref{kalt} and \eqref{palt},
and corresponding integral operators $\K$ and $\Proj$ such that Proposition~\ref{reglosning} holds.

\begin{definition}\label{Adefini}
We say that a $(0,q)$-current $\phi$ on an open set $\mathcal{U}\subset X$ is a section of $\A^X$ over $\mathcal{U}$, 
$\phi\in \A_q(\mathcal{U})$, if, for every $x\in \mathcal{U}$, the germ $\phi_x$ can be written as a finite sum of 
terms
\begin{equation*}
\xi_{\nu}\wedge \K_{\nu}(\cdots \xi_2\wedge \K_2(\xi_1\wedge \K_1(\xi_0))\cdots),
\end{equation*}
where $\K_j$ are integral operators with kernels $k_j(\zeta,z)$ at $x$ of the form defined in Section~\ref{KoppelsektionI}
and $\xi_j$ are smooth $(0,*)$-forms at $x$ such that $\xi_j$ has compact support in the set where 
$z\mapsto k_j(\zeta,z)$ is defined.
\end{definition}

Recall from Section \ref{KoppelsektionII} that if $\phi\in \W(\mathcal{U})$ and $\K$ is an integral 
operator, as defined above, with kernel $k(\zeta,z)$,
where $z\mapsto k(\zeta,z)$ is defined in $\mathcal{U}'\Subset \mathcal{U}$, then $\K \phi\in \W(\mathcal{U}')$. 
Therefore, $\A^X$ 
is a subsheaf of $\W^X$ and from Lemma \ref{smooth} it follows that the currents
in $\A^X$ are smooth on $X_{reg}$. In view of Lemmas~\ref{mainlemma} and \ref{urdjur}
we see that $\A^X$ is in fact a subsheaf of $\mbox{Dom}\, \debar_X$.
We also note that if $\phi\in \A_q(\mathcal{U})$, then $\K \phi \in \A_{q-1}(\mathcal{U}')$.

\begin{proof}[Proof of Theorem \ref{main}]
It is clear that $\A^X_q\supset \E^X_{0,q}$ are fine sheaves satisfying (i) of Theorem~\ref{main}
and we have just noted that also (ii) holds.

We must check condition (iii).
We have already seen in Proposition \ref{Bprop} that the kernel of $\debar$ in 
$\mbox{Dom}_0\,\debar_X$ is $\hol^X$.
Let $\phi$ be a section of $\A^X_q$, $q\geq 1$, in a neighborhood of an
arbitrary point $x\in X$, and assume that $\debar \phi=0$.
Since $\A^X\subset \mbox{Dom}\, \debar_X$ we also have $\debar_X\phi=0$. For some 
neighborhood $\mathcal{U}$ of $x$, by Proposition~\ref{reglosning}, we can find an operator $\K$ such that 
\begin{equation}\label{kaffe}
\debar \K \phi=\phi
\end{equation}
in $\mathcal{U}_{reg}$; here $\K$ corresponds to a weight that is holomorphic
in $z$. Since $\phi$ is a section of $\A^X_q$ we know that $\K \phi$ is a section of $\A_{q-1}^X$ and since
$\A^X \subset \mbox{Dom}\, \debar_X$ it follows from Proposition~\ref{Bprop} 
that $\debar \K \phi$ is in $\W^X$.
Both sides of \eqref{kaffe} thus have the SEP and we conclude that \eqref{kaffe} in fact holds on $\mathcal{U}$.

% Let $\chi_{\delta}=\chi(|h|/\delta)$, where $h$ is a holomorphic tuple cutting out 
% $X_{sing}$ and $\chi$ is a smooth approximation of $\mathbf{1}_{[1,\infty)}$. Since $\phi$ and 
% $\K\phi$ are in $\W$ we have $\chi_{\delta}\phi\to \phi$ and
% $\chi_{\delta}\K\phi\to \K\phi$, and moreover, in $\mathcal{U}'$, we have
% \begin{eqnarray*}
% \phi&=&\lim_{\delta\to 0^+} \chi_{\delta}\phi=
% \lim_{\delta\to 0^+}\chi_{\delta} \debar \K\phi=\lim_{\delta\to 0^+}\debar (\chi_{\delta}\K\phi)+
% \lim_{\delta\to 0^+} \debar \chi_{\delta}\wedge \K\phi \\
% &=& \debar \K\phi +0,
% \end{eqnarray*}
% where the last equality follows from Lemma~\ref{mainlemma}. Notice also that since $\K\phi$ is a section of 
% $\A^X\subset \mbox{Dom} \,\debar_X$, we have $\debar_X\K\phi=\debar\K\phi =\phi$ in $\mathcal{U}'$.

It remains to prove that $\debar$ is a map from $\A^X$ to $\A^X$. It is sufficient to show
that 
\begin{equation}\label{sova}
\debar\big(\xi_{\nu}\wedge \K_{\nu}(\cdots \xi_2\wedge \K_2(\xi_1\wedge \K_1(\xi_0))\cdots)\big)\in \A^X,
\end{equation}
for any operators $\K_j$ (not necessarily corresponding to weights that are holomorphic in $z$)
and smooth $(0,*)$-forms $\xi_j$ with compact support where $\K_j(\xi_{j-1})$ is defined. 
We prove \eqref{sova} by induction over $\nu$. The case $\nu=0$ is clear. Assume that \eqref{sova} holds for $\nu=\ell-1$.
Let $\K_j$, $j=1,\ldots,\ell$ be any integral operators and $\xi_j$, $j=0,\ldots,\ell$, 
smooth forms with compact support where $\K_j(\xi_{j-1})$ are defined.
Put $\phi_{\ell-1}=\xi_{\ell-1}\wedge \K_{\ell-1}(\cdots \xi_1\wedge \K_1(\xi_0)\cdots)$
and let $\mathcal{U}$ be a sufficiently small neighborhood of $\mbox{supp}\, \xi_{\ell}$. By Proposition~\ref{reglosning}
we have that
\begin{equation}\label{hej}
\phi_{\ell-1}=\K_{\ell} (\debar \phi_{\ell-1}) + \debar \K_{\ell} \phi_{\ell-1} + \Proj_{\ell} \phi_{\ell-1}
\end{equation}
in $\mathcal{U}_{reg}$; notice that $\Proj_{\ell}\phi_{\ell-1}$ is smooth.
From the induction hypothesis we have that $\debar \phi_{\ell-1}$ is in $\A^X$.
Moreover, any $\K$ maps $\A^X$ to $\A^X$ and since $\A^X\subset \mbox{Dom}\, \debar_X$,
all terms in \eqref{hej} have the SEP. Hence, \eqref{hej} holds on $\mathcal{U}$ and it follows
that $\debar \K_{\ell} \phi_{\ell-1}$ is in $\A(\mathcal{U})$.
Thus, \eqref{sova} holds for $\nu=\ell$ and the proof is complete.

%  Actually, \eqref{hej} holds on $\mathcal{U}$; to see this let $\chi_{\delta}$ be as in the 
% first part of the proof, multiply \eqref{hej} with $\chi_{\delta}$, and compute the limit using Lemma~\ref{mainlemma} as above.
% From the induction hypothesis we have $\debar \phi_{\ell-1}\in \A(X)$ and so it follows from \eqref{hej}
% that $\debar \K_{\ell} \phi_{\ell-1}\in \A(\mathcal{U})$. Thus, \eqref{sova} holds for $\nu=\ell$ and the proof is complete.
\end{proof}

\begin{proof}[Proof of Theorem~\ref{koppelman}]
From Section~\ref{KoppelsektionII} we have integral operators $\K$ and $\Proj$ such that 
$\Proj\varphi$ is holomorphic in $\Omega'$
if $\varphi\in \W_{0,0}(X)$ and $0$ if $\varphi\in \W_{0,q}(X)$, $q\geq 1$. Moreover, we 
noted above that $\K\colon \A_{q+1}(X)\to \A_q(X')$ and that $\A^X$
is a subsheaf of $\mbox{Dom}\, \debar_X$.  
Let $\phi\in \A_q(X)$, $q\geq 1$. By Proposition~\ref{reglosning} we have that 
\begin{equation}\label{gardin}
\phi = \debar \K \phi + \K(\debar \phi) + \Proj \phi
\end{equation}
on $X'_{reg}$. Since $\phi$ and $\debar \phi$ are in $\A^X$, all terms in \eqref{gardin}
have the SEP, cf., the previous proof. Hence \eqref{gardin} holds on $X'$
and so Theorem~\ref{koppelman} follows.
\end{proof}

\section{Example with a reduced complete intersection}\label{exsektion}

Let $a_1,\ldots,a_p\in \hol (\overline{\B})$, where $\B\subset \C^{n}$ is the unit ball, 
and assume that $X=\{a_1=\cdots =a_p=0\}\cap \B$ is a reduced complete intersection, i.e.,
that $X$ has pure codimension $p$ and $da_1\wedge \cdots \wedge da_p \neq 0$ on $X_{reg}$.
Let $e_1,\ldots,e_p$ be a holomorphic frame for the trivial
bundle $A$ and let $a$ be the section $a=a_1e_1^*+\cdots + a_pe_p^*$ of the dual bundle $A^*$,
where $\{e_j^*\}$ is the dual frame. Put $E_k=\Lambda^k A$ and let $\delta_a\colon \hol(E_{\bullet+1})\to \hol(E_{\bullet})$ be 
interior multiplication with $a$.
The Koszul complex $(\hol(E_{\bullet}), \delta_a)$ is 
then a free resolution of $\hol^{\Omega}/\J_X$, cf., \eqref{karvupplosn}.
It is clear that $s_a:=\sum_j \bar{a}_je_j/|a|^2$ is the solution to $\delta_a s_a=1$, outside $X$, with pointwise 
minimal norm (with respect to the trivial metric on $A$). If we consider all forms as sections of the bundle
$\Lambda (T^*(\Omega) \oplus A)$, then we can write \eqref{ukdef} as $u_k=s_a \wedge (\debar s_a)^{k-1}$.
Following \cite{AW1}, cf., \eqref{rdef}, we get that
\begin{equation}\label{CHprod}
R=R_p=\debar |a|^{2\lambda}\wedge u_p \big|_{\lambda=0}= \debar\frac{1}{a_p}\wedge \dots \wedge 
\debar \frac{1}{a_1} \wedge e_1 \wedge \cdots \wedge e_p,
\end{equation}
i.e., $R$ is the classical Coleff-Herrera product (times $e_1\wedge \cdots \wedge e_p$).
Let $\omega'$ be a smooth $E_p$-valued form in $\Omega\setminus X_{sing}$ such that 
$da_1\wedge \cdots \wedge da_p\wedge \omega' /(2\pi i)^p = e\wedge d\zeta$ where 
$e=e_1\wedge \cdots \wedge e_p$ and $d\zeta=d\zeta_1\wedge \cdots \wedge d\zeta_N$. 
Then the pullback $i^*\omega'$, where $i\colon X\hra \B$, is unique and meromorphic on $X$.
By the Leray residue formula we get that
\begin{equation*}
R\wedge d\zeta = \debar\frac{1}{a_p}\wedge \dots \wedge \debar \frac{1}{a_1}\wedge e \wedge d\zeta = 
\omega' \wedge [X],
\end{equation*}
and so, cf., \eqref{DLrep} and \eqref{staty}, the structure form associated to $R$ is 
$\omega:=i^*\omega'$.
If we choose coordinates $\zeta=(\zeta',\zeta'')$ so that $\det (\partial a/\partial \zeta')$ 
is generically non-vanishing on $X_{reg}$, then we can take 
$\omega'=(-2\pi i)^p \, e\wedge d\zeta'' /\det (\partial a/\partial \zeta')$ and the structure form is explicitly given as
\begin{equation*}
\omega = i^*\big( (-2\pi i)^{p} \, e\wedge d\zeta'' /\det (\partial a/\partial \zeta')\big).
\end{equation*}
If we let 
\begin{equation*}
\gamma = \frac{(-2\pi i)^p}{\det (\partial a/\partial \zeta')} \, e\wedge 
\frac{\partial}{\partial \zeta_p} \wedge \cdots \wedge \frac{\partial}{\partial \zeta_1},
\end{equation*}
then we have $R=(-1)^p\gamma \lrcorner [X]$, cf., \eqref{staty}.

Let $\mu\in \W_{0,q}(X)$ and assume that $\mu$ is smooth on $X_{reg}$. Then, cf., Section~\ref{starksektion}, 
$\mu$ is a section of $\mbox{Dom}\,\debar_X$
if and only if 
$\debar \chi_{\delta}\wedge \mu\wedge i^*(d\zeta''/\det (\partial a/\partial \zeta')) \to 0$ in the current sense as $\delta \to 0$;
here $\chi_{\delta}=\chi(|h|/\delta)$, $\chi$ is a smooth approximand of the characteristic function of 
$[1,\infty)$ and $h$ cuts out $X_{sing}$.

To construct integral kernels, cf., Section~\ref{KoppelsektionI}, let $h_j$ be $(1,0)$-forms 
so that $\delta_{\eta}h_j=a_j(\zeta)-a_j(z)$, where $\eta=\zeta-z$. 
We then have Hefer morphisms $H_k^{\ell}$ given as interior multiplication with 
$(\sum h_j\wedge e^*_j)^{k-\ell}/(k-\ell)!$. Let $g$ be the weight from Example~\ref{vikt}
and let $B$ be the Bochner-Martinelli form. Then
$(HR\wedge g\wedge B)_N =H_p^0R_p\wedge (g\wedge B)_n$ since $R=R_p$
and a straight forward computation shows that
\begin{equation*}
HR=H_p^0R_p=\debar \frac{1}{a_p}\wedge \cdots \wedge \debar \frac{1}{a_1}\wedge h_1\wedge \cdots
\wedge h_p.
\end{equation*}
There is a $\tilde{k}(\zeta,z)=\mathcal{O}(|\eta|^{-2n+1})$ such that
\begin{equation*}
h_1\wedge \cdots \wedge h_p\wedge (g\wedge B)_n=(2\pi i)^{-p}d\eta\wedge
\tilde{k}(\zeta,z)
\end{equation*}
and so from \eqref{k} we see that our solution kernel for $\debar$ on $X$ is
\begin{equation*}
k(\zeta,z)=\pm \big(\gamma \lrcorner H_p^0\wedge (g\wedge B)_n\big)_{(n)}
=\pm \tilde{k}(\zeta,z)\wedge \frac{d\zeta''}{\det(\partial a/\partial \zeta')}.
\end{equation*}
Similarly,
there is a smooth form $\tilde{p}(\zeta,z)$, depending holomorphically on $z$ if $g$ does, such that
\begin{equation*}
h_1\wedge \cdots \wedge h_p\wedge (\bar{\zeta}\cdot d\eta)\wedge (d\bar{\zeta}\cdot d\eta)^{n-1}=
(2\pi i)^{-p}d\eta \wedge \tilde{p}(\zeta,z)
\end{equation*}
and we can compute $p(\zeta,z)$ from \eqref{p} using $\tilde{p}(\zeta,z)$.
We get the representation formula
\begin{equation}\label{Stout}
\phi(z)=\int_{X}\debar \chi(\zeta) \wedge
\frac{d\zeta''}{\det(\partial a/\partial \zeta')}\wedge \frac{\tilde{p}(\zeta,z)}{(|\zeta|^2-z\cdot \bar{\zeta})^n}\,\phi(\zeta)
\end{equation}
for (strongly) holomorphic functions $\phi$ on $X$. If $X$ intersects $\partial \B$ properly and 
$X_{sing}$ avoids $\partial \B$ then we may let $\chi$ tend to the characteristic function for $\B$.
The integral \eqref{Stout} then becomes an integral over $X\cap \partial \B$ and the resulting representation
formula coincides with a formula of Stout \cite{Stout} and Hatziafratis \cite{Hatzi}.

\smallskip

Let us consider the cusp $X=\{a(z)=z_1^r-z_2^s=0\}\subset \B\subset \C^2$, 
where $2\leq r<s$ are relatively prime integers, in more detail.
In this case the structure form is the pullback of $-2\pi i \,e_1\wedge d\zeta_2/(r\zeta_1^{r-1})$ to $X$
and we can take $\gamma(\zeta)=(-2\pi i/r\zeta_1^{r-1})\cdot e_1\wedge (\partial/\partial \zeta_1)$.
The Hefer form is given by
\begin{equation*}
h=h_1d\eta_1 + h_2d\eta_2=
\frac{1}{2\pi i}\big(\frac{\zeta_1^r-z_1^r}{\zeta_1-z_1}d\eta_1 + \frac{\zeta_2^s-z_2^s}{\zeta_2-z_2}d\eta_2\big)
\end{equation*}
and we get
\begin{equation}\label{spets1}
h\wedge (g\wedge B)_1=h\wedge \chi(\zeta) \frac{\partial |\eta|^2}{2\pi i |\eta|^2}
=(2\pi i)^{-1} d\eta_1\wedge d\eta_2 \, \tilde{k}(\zeta,z)
\end{equation}
for a certain function $\tilde{k}(\zeta,z)$. The restriction of this function to $X\times X$ can be computed by applying
$\delta_{\eta}$ to \eqref{spets1} and noting that $\delta_{\eta}h=a(\zeta)-a(z)=0$ on $X\times X$. One gets that
$\tilde{k}(\zeta,z)=\chi(\zeta)\, h_1/\eta_2$ on $X\times X$ and so our solution kernel for $\debar$ on the cusp is
\begin{equation*}
k(\zeta,z)=\frac{d\zeta_2}{r\zeta_1^{r-1}}\, \tilde{k}(\zeta,z)=
\frac{\chi(\zeta)}{2\pi i} \frac{\zeta_1^r-z_1^r}{(\zeta_1-z_1)(\zeta_2-z_2)} \frac{d\zeta_2}{r\zeta_1^{r-1}}.
\end{equation*} 
Expressed in the parametrization $\tau\mapsto (\tau^s,\tau^r)=(\zeta_1,\zeta_2)$ our solution operator
for $(0,1)$-forms thus becomes
\begin{equation*}
\K \phi \,(t)= \frac{1}{2\pi i}\int_{\tau} \chi(\tau) \frac{\tau^{rs}-t^{rs}}{(\tau^{s}-t^s)(\tau^r-t^r)}
\frac{d\tau}{\tau^{(s-1)(r-1)}}\wedge \phi(\tau). 
\end{equation*}
One similarly shows that the projection operator $\Proj$ looks the same but with $\chi$ replaced by $\debar \chi$,
i.e., the kernel is the same but the solid integral is replaced by a boundary integral.

\section{The one-dimensional case}\label{kurvsektion}
In the case when $X$ is a complex curve we have some further results. In particular, we have
a stronger version of Proposition~\ref{Bprop}.

\begin{proposition}\label{kurvsats}
Let $X$ be a reduced complex curve.
\begin{itemize}
\item[(i)] If the complex 
$0\to \hol^X \hookrightarrow \E_{0,0}^X \stackrel{\debar}{\longrightarrow} \E_{0,1}^X \to 0$ is exact,
then $\A^X_*=\E^X_{0,*}$.

\item[(ii)] The complex 
$0\to \hol^X \hookrightarrow \mbox{Dom}_{0}\,\debar_X \stackrel{\debar}{\longrightarrow} \mbox{Dom}_{1}\,\debar_X \to 0$ 
is exact.
\end{itemize}
\end{proposition}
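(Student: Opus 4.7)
The plan is to prove (ii) first and then deduce (i) from it.

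For (ii), I observe that since $\dim X = 1$ we have $\mbox{Dom}_1\,\dbar_X = \W^X_{0,1}$, as noted in Section~\ref{starksektion}. Given $\phi \in \W^X_{0,1}$ at a point $x \in X$, I would set $u := \K\phi$ from the semi-global Koppelman formula on a neighborhood, choosing the weight to be holomorphic in $z$ so that $\Proj\phi = 0$. By Proposition~\ref{Wprop}, $u \in \W^X_{0,0}$ and $\dbar u = \phi$ on $X'_{reg}$. The remaining task is to show that $u \in \mbox{Dom}_0\,\dbar_X$; by Lemma~\ref{urdjur} this amounts to verifying the boundary condition
\[
\dbar\chi_\delta \wedge u \wedge \omega \to 0, \qquad \delta \to 0,
\]
at each $x \in X_{sing}$ and for every $\omega \in \mathfrak{S}_x$. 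Once this is in place, both $\dbar(u\wedge\omega)$ and $\phi\wedge\omega$ are sections of $\W^X$ that agree on the dense open $X'_{reg}$, so they agree on all of $X'$, whence $\dbar_X u = \phi$.

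The main obstacle is this boundary verification. I would attack it locally at a singular point via the normalization $\nu\colon \tilde X \to X$, which is a smooth curve since $\dim X = 1$, and use the explicit representation \eqref{kalt} of $k(\zeta,z)$. A simplifying feature of the curve case is that the structure form reduces to $\omega = \omega_0$ (no higher $\omega_r$'s), so $k(\zeta,z)$ factors cleanly as an almost semi-meromorphic multiple of $\omega(\zeta)$ with controllable singularities on the diagonal; after pulling back via $\nu$, the verification reduces to a local computation on $\tilde X$ where all objects are tractable off the preimage of $X_{sing}$, and the uniform estimates together with the dimension principle force the limit to vanish.

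For (i), assume (ii) and the exactness hypothesis, which provides local smooth primitives for smooth $(0,1)$-forms on $X$. I would show by induction on the depth $\nu$ of the presentation in Definition~\ref{Adefini} that every germ in $\A^X_q$ is smooth. The base case $\nu=0$ is trivial. The induction step reduces to the following key claim: if $u$ is a smooth compactly supported $(0,1)$-form, then $\K u$ is smooth. To see this, pick a smooth local $v$ with $\dbar v = u$. The Koppelman formula gives $\dbar\K u = u - \Proj u = u$ (the weight being chosen holomorphic in $z$, so that $\Proj u = 0$ since $u$ has positive bidegree), hence $\dbar(\K u - v) = 0$. Part (ii) gives $\K u \in \mbox{Dom}_0\,\dbar_X$, and $v$ lies there as a smooth form, so $\K u - v$ is a $\dbar$-closed section of $\mbox{Dom}_0\,\dbar_X$, hence holomorphic by Proposition~\ref{Bprop}. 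Therefore $\K u = v + h$ is smooth. Since multiplication by smooth $\xi_j$'s preserves smoothness, the induction closes and $\A^X_q \subset \E^X_{0,q}$; the reverse inclusion is trivial.
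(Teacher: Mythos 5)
Your part (i) is essentially correct and close in spirit to the paper's argument (the paper simply writes $\K\xi=\K(\debar\psi)=\psi-\Proj\psi$ for a smooth local primitive $\psi$, while you apply the Koppelman formula to $\xi$ and identify $\K\xi-\psi$ as holomorphic; both work). One remark: you invoke part (ii) to get $\K u\in\mbox{Dom}_0\,\debar_X$, but this is not what (ii) says and, more importantly, it is not needed -- for smooth $u$ one already has $\K u\in\A^X\subset \mbox{Dom}\,\debar_X$ from Section~\ref{asec}. This matters because your (i) would otherwise inherit the problem in your (ii).

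Part (ii) has a genuine gap at exactly the step you flag as ``the main obstacle.'' You propose to verify the boundary condition $\debar\chi_\delta\wedge\K\phi\wedge\omega\to 0$ directly, with the vanishing ultimately ``forced by the dimension principle.'' The dimension principle cannot do this: the limit current is pseudomeromorphic of bidegree $(1,1)$ with support on $X_{sing}$, which has codimension exactly $1$ in the curve, and the dimension principle requires codimension \emph{strictly greater} than the antiholomorphic degree. Indeed, the obstruction is genuinely nonzero in general: lifting $u_1=\K\mu$ to the normalization $\pi\colon\tilde X\to X$, the limit $\nu=\lim_\delta\debar\chi_\delta\wedge\tilde u_1\tilde\omega$ is a finite combination $\sum_j c_j\,\debar(1/t^{j+1})\wedge dt$ of holomorphic derivatives of the Dirac mass at $\pi^{-1}(0)$. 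The actual proof must therefore \emph{correct} the Koppelman solution: writing $\tilde\omega=f(t)\,dt/t^k$ with $f(0)\neq 0$, one sets $\tilde u_2=\sum_j c_j\,t^{k-j-1}/f(t)$, so that $\debar(\tilde u_2\tilde\omega)=\nu$, and then $u=\K\mu-\pi_*\tilde u_2$ satisfies the boundary condition and solves $\debar u=\mu$. The statement that $\K\phi$ itself lies in $\mbox{Dom}_0\,\debar_X$ is true, but it is deduced only \emph{a posteriori} from the surjectivity just established (by an argument like your (i)); it is not available as the first step of the proof of (ii). Without the explicit computation of $\nu$ and the construction of the meromorphic correction term, your argument for (ii) does not close.
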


\begin{proof}
To prove (i), according to Definition~\ref{Adefini}, it is enough to show that $\K \xi$ 
is smooth for every $\K$
if $\xi$ is a smooth $(0,1)$-form. If $\xi$ is a smooth $(0,1)$-form,
there is (locally) a smooth function $\psi$ such that
$\debar \psi =\xi$. Smooth forms are in $\A^X$ and so, cf., the proof of Theorem~\ref{koppelman},
we get that
\begin{equation*}
\K \xi=\K(\debar \psi) = \psi - \Proj \psi
\end{equation*}
on $X$. Since $\Proj \psi$ is smooth, $\K \xi$ is indeed smooth on $X$. 

\smallskip

From Proposition \ref{Bprop} we have that the kernel of $\debar$ in $\mbox{Dom}_0\,\debar_X$ is $\hol^X$
so to prove (ii) it remains to see that $\debar \colon \mbox{Dom}_{0}\,\debar_X \to \mbox{Dom}_{1}\,\debar_X=\W_{0,1}^X$
is surjective. We take a minimal local embedding $X\hookrightarrow \C^N$ so that $X_{sing}=\{0\}$ and we let 
$\mu$ be a section of $\W_{0,1}^X$ in a neighborhood of $0$. We choose a Hermitian minimal free resolution of $\hol^X$ 
and we get the structure form $\omega=\omega_0$;
notice that $\hol^X$ is Cohen-Macaulay since $\mbox{dim}\,X=1$. Let $\K$ and $\Proj$ be integral operators
as in Section~\ref{KoppelsektionII} associated with a weight $g$ which is holomorphic in $z$.
From Proposition~\ref{reglosning} we have that $u_1:=\K\mu$ is in $\W_{0,0}^X$ and solves $\debar u_1=\mu$ outside $0$;
we will modify this solution to a solution in $\mbox{Dom}\, \debar_X$.

Let $\pi\colon \tilde{X}\to X$ be the normalization of $X$. Then $\tilde{\omega}:=\pi^*\omega$ 
is a meromorphic $(1,0)$-form and 
from \eqref{kommdiagram} we see that there is $\tilde{u}_1$ in $\W_{0,0}^{\tilde{X}}$ such that $\pi_* \tilde{u}_1=u_1$.
Let $h$ be a holomorphic tuple such that $\{h=0\}=\{0\}$ and put $\chi_{\delta}=\chi(|h|/\delta)$. 
Then $\nu:=\lim_{\delta\to 0^+}\debar\chi_{\delta}\wedge \tilde{u}_1 \tilde{\omega}$ is a 
pseudomeromorphic $(1,1)$-current on $\tilde{X}$ with support in the finite set of points $\pi^{-1}(0)$.  
Let us for simplicity assume that $X$ is irreducible at $0$ so that $\tilde{X}$ is connected and 
$\pi^{-1}(0)$ is just one point $t=0$ for some
holomorphic coordinate $t$ on $\tilde{X}$. 
Then $\nu$ has support at $t=0$ and hence equals a finite linear combination of 
derivatives of the Dirac mass, $\delta_0$, at $t=0$. Moreover, since $\nu$ is pseudomeromorphic, 
only holomorphic derivatives occur, cf., the first part of the proof of Proposition~\ref{grad/stod},
and so we have
\begin{equation*}
\nu= \sum_0^{\ell} c'_j\, \frac{\partial^j}{\partial t^j} \delta_0=
\sum_0^{\ell} c_j \, \debar \Big(\frac{1}{t^{j+1}}\Big)\w dt, \quad c'_j, c_j\in \C.
\end{equation*} 
Also, since $\tilde{\omega}$ is meromorphic, $\tilde{\omega}=f(t)dt/t^k$ for some $k\geq 0$ and some
holomorphic function $f$ with $f(0)\neq 0$. The current
\begin{equation*}
\tilde{u}_2:=\sum_{j=0}^{\ell} c_j\, \frac{t^{k-j-1}}{f(t)}.
\end{equation*}
is  holomorphic for $t\neq 0$ and by construction, $\nu=\debar (\tilde{u}_2\tilde{\omega})$. 
If $\tilde{u}:=\tilde{u}_1-\tilde{u}_2$, it is then straightforward to verify that 
$\debar\chi_{\delta}\wedge \tilde{u}\tilde{\omega}\to 0$ on $\tilde{X}$.
Hence, $u:=\pi_* \tilde{u} = u_1 - \pi_* \tilde{u}_2$ is in $\mbox{Dom}_0\,\debar_X$ and solves 
$\debar u=\mu$.
\end{proof}

Notice that once we know that $\debar \colon \mbox{Dom}_{0}\,\debar_X \to \mbox{Dom}_{1}\,\debar_X=\W_{0,1}^X$
is surjective, it is easy to show, using an argument similar to the proof of statement (i) above,
that our solution operators for $\debar$ indeed produce solutions
in $\mbox{Dom}_0\, \debar_X$. 

Also notice that, in view of Proposition~\ref{Bprop}, it follows from (ii) of Proposition~\ref{kurvsats}
that if $H^1(X,\hol^X)=0$ and $\phi\in \W_{0,1}(X) =\mbox{Dom}_1\,\debar_X$, 
then there is a $\psi\in \W_{0,0}(X)$ such that $\debar_X \psi=\phi$ on $X$.

\end{document}